\newcommand{\R}{{\mathbb R}}
\newcommand{\N}{{\mathbb N}}
\newcommand{\e }{\varepsilon}
\newcommand{\HH}{\mathcal{H}}
\newcommand{\eps}{\varepsilon}
\def\ds{\displaystyle}
\renewcommand{\ge }{\geqslant}
\renewcommand{\geq }{\geqslant}
\renewcommand{\le }{\leqslant}
\renewcommand{\leq }{\leqslant}
\def\neweq#1{\begin{equation}\label{#1}}
\def\endeq{\end{equation}}
\def\eq#1{(\ref{#1})}
\newtheorem{theorem}{Theorem}[section]
\newtheorem{proposition}[theorem]{Proposition}
\newtheorem{lemma}[theorem]{Lemma}
\newtheorem{conjecture}[theorem]{Conjecture}
\newtheorem{remark}[theorem]{Remark}
\newtheorem{definition}[theorem]{Definition}
\renewcommand{\arraystretch}{1.5}
\begin{document}

\title[]{Structural instability of nonlinear plates\\
modelling suspension bridges:\\
mathematical answers to some long-standing questions}

\author[Elvise BERCHIO]{Elvise BERCHIO}
\address{\hbox{\parbox{5.7in}{\medskip\noindent{Dipartimento di Scienze Matematiche, \\
Politecnico di Torino,\\
        Corso Duca degli Abruzzi 24, 10129 Torino, Italy. \\[3pt]
        \em{E-mail address: }{\tt elvise.berchio@polito.it}}}}}
\author[Alberto FERRERO]{Alberto FERRERO}
\address{\hbox{\parbox{5.7in}{\medskip\noindent{Dipartimento di Scienze e Innovazione Tecnologica, \\
Universit\`a del Piemonte Orientale ``Amedeo Avogadro'',\\
        Viale Teresa Michel 11, 15121 Alessandria, Italy. \\[3pt]
        \em{E-mail address: }{\tt alberto.ferrero@mfn.unipmn.it}}}}}
\author[Filippo GAZZOLA]{Filippo GAZZOLA}
\address{\hbox{\parbox{5.7in}{\medskip\noindent{Dipartimento di Matematica,\\
Politecnico di Milano,\\
   Piazza Leonardo da Vinci 32, 20133 Milano, Italy. \\[3pt]
        \em{E-mail address: }{\tt filippo.gazzola@polimi.it}}}}}

\date{\today}

\keywords{Higher order equations, Boundary value problems, Nonlinear evolution equations}

\subjclass[2010]{35A15, 35C10, 35G31, 35L76, 74B20, 74K20.}

\begin{abstract}
We model the roadway of a suspension bridge as a thin rectangular plate and we study in detail its oscillating modes. The plate is assumed to be hinged
on its short edges and free on its long edges. Two different kinds of oscillating modes are found: longitudinal modes and torsional modes. Then we analyze
a fourth order hyperbolic equation describing the dynamics of the bridge. In order to emphasize the structural behavior we consider an isolated equation
with no forcing and damping. Due to the nonlinear behavior of the cables and hangers, a structural instability appears. With a finite dimensional approximation
we prove that the system remains stable at low energies while numerical results show that for larger energies the system becomes unstable.
We analyze the energy thresholds of instability and we show that the model allows to give answers to several questions left open by the Tacoma collapse in 1940.
\end{abstract}

\maketitle

\section{Introduction}

The history of suspension bridges essentially starts a couple of
centuries ago. The first modern suspension bridge is considered to
be the Jacob Creek Bridge, built in Pennsylvania in 1801 and
designed by the Irish judge and engineer James Finley, see
\cite{jfinley} for the patent and the original design. At the same
time, several suspension bridges were erected in the UK, see
(e.g.) the introduction in the seminal book \cite{bleich}. The
political instability due to the French Revolution and to the
Napoleon period kept France slightly delayed. For this reason, M.\
Becquey (Conseiller d'Etat, Directeur G\'en\'eral des Ponts et
Chauss\'ees et des Mines) committed Navier to visit the main
bridges in the UK and to report on their feasibility and
performances. In his detailed report \cite[p.161]{navier}, Navier
wondered about the possible negative effects of the action of the
wind: {\em Les accidens qui r\'esulteraient de cette action ne
peuvent \^etre appr\'eci\'es et pr\'evenus que d'apr\`es les
lumi\`eres fournies par l'observation et l'exp\'erience}.
Unfortunately, he had seen right.\par Many bridges manifested
aerodynamic instability and uncontrolled oscillations leading to
collapses, see e.g.\ \cite{akesson,bridgefailure}. These accidents
are due to many different causes and in this paper we are only
interested about those due to wide unexpected oscillations. We
will give a mathematical explanation for the appearance of
torsional oscillations by analyzing a suitable partial
differential equation modeling the bridge.\par Thanks to the
videos available on the web \cite{tacoma} most people have seen
the spectacular collapse of the Tacoma Narrows Bridge (TNB),
occurred in 1940. In Figure \ref{tacoma12} (picture taken from
\cite[p.6]{ammann})
\begin{figure}[ht]
\begin{center}
{\includegraphics[height=59mm, width=125mm]{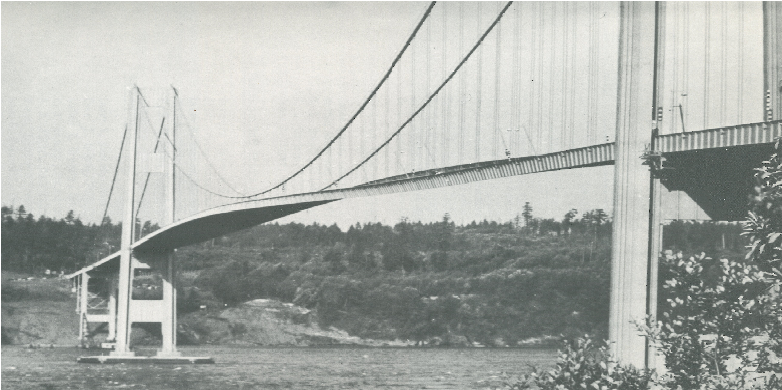}}
\caption{The collapsed Tacoma Narrows Bridge (1940).}\label{tacoma12}
\end{center}
\end{figure}
one can see the roadway of the TNB under a torsional oscillation. This kind of oscillation was considered the main cause of the
collapse \cite{ammann,scott}.
But the appearance of torsional oscillations is not an isolated event occurred only at the TNB. The Brighton Chain Pier was erected in 1823 and
collapsed in 1836: Reid \cite{reid} reported valuable observations and sketched a picture illustrating the collapse see Figure \ref{brighton}
\begin{figure}[ht]
\begin{center}
{\includegraphics[height=99mm, width=125mm]{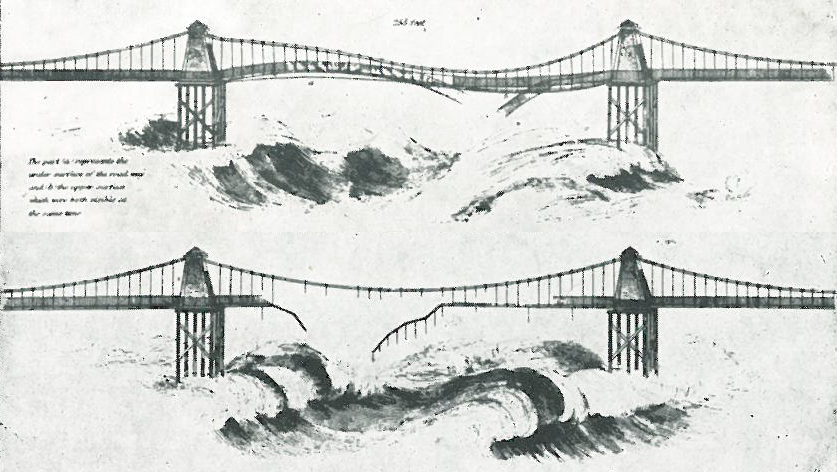}}
\caption{Collapse of the Brighton Chain Pier (1836).}\label{brighton}
\end{center}
\end{figure}
(picture taken from \cite{reid}).
The Wheeling Suspension Bridge was erected in West Virginia in 1849 and collapsed in a violent windstorm in 1854: according to \cite{wheel},
it {\em twisted and writhed, and was dashed almost bottom upward. At last there seemed to be a determined twist along the entire span, about
one half of the flooring being nearly reversed, and down went the immense structure from its dizzy height to the stream below, with an appalling
crash and roar}. Finally, let us mention that Irvine \cite[Example 4.6, p.180]{irvine} describes the collapse of the Matukituki Suspension
Footbridge in New Zealand (occurred in 1977, just twelve days after completion) by writing that {\em the deck persisted in lurching and twisting
wildly until failure occurred, and for part of the time a node was noticeable at midspan}. This description is completely similar to what can be
seen in Figures \ref{tacoma12} and \ref{brighton} as well as in the video \cite{tacoma}. These are just few examples aiming to show that the very
same instability was observed in several different bridges.\par
These accidents raised some fundamental questions of deep interest also for mathematicians. Longitudinal oscillations are to be expected in
suspension bridges but
\begin{center}
{\bf (Q1) why do longitudinal oscillations suddenly transform into torsional oscillations?}
\end{center}
This question has drawn the attention of both mathematicians and engineers but, so far, no unanimously accepted response has been found. The distinguished
civil and aeronautical engineer Robert Scanlan \cite[p.209]{scanlan2} attributes the appearance of torsional oscillations to {\em some fortuitous condition}.
The word ``fortuitous'' highlights a lack of rigorous explanations and, according to \cite{scott}, no real progress has been done in subsequent years.\par
The above collapses also show that the torsional oscillation has a particular shape, with a node at midspan. And it seems that this particular kind of
torsional oscillation is the only one ever seen in suspension bridges. From the Official Report \cite[p.31]{ammann} we quote {\em Prior to 10:00 A.M.\ on
the day of the failure, there were no recorded instances of the oscillations being otherwise than the two cables in phase and with no torsional motions} whereas
from Smith-Vincent \cite[p.21]{tac2} we quote {\em the only torsional mode which developed under wind action on the bridge or on the model is that with a
single node at the center of the main span}. This raises a further natural question:
\begin{center}
{\bf (Q2) why do torsional oscillations appear with a node at midspan?}
\end{center}

According to Eldridge \cite[V-3]{ammann}, a witness on the day of the TNB collapse, {\em the bridge appeared to be behaving in the customary
manner} and the motions {\em were considerably less than had occurred many times before}. From \cite[p.20]{ammann} we also learn that in the months
prior to the collapse {\em one principal mode of oscillation prevailed} and that {\em the modes of oscillation frequently changed}.
In particular, Farquharson \cite[V-10]{ammann} witnessed the collapse and wrote that {\em the motions, which a moment before had involved a
number of waves (nine or ten) had shifted almost instantly to two}. This raises a third natural question:
\begin{center}
{\bf (Q3) are there longitudinal oscillations which are more prone
to generate torsional oscillations?}
\end{center}

The purpose of this paper is to use the semilinear plate model
developed in \cite{fergaz} and to adapt it to a suspension bridge
having the same parameters as the collapsed TNB. By analyzing both
theoretically and numerically this model, we will give an answer
to the above questions {\bf (Q1)}, {\bf (Q2)} and {\bf (Q3)}.\par
This paper is organized as follows. In Section \ref{nonmod} we
recall and slightly modify the model introduced in \cite{fergaz},
in particular we discuss the nonlinear restoring force due to the
hangers+cables system. In Section \ref{modes} we study in great
detail the oscillating modes of the plate, according to the TNB
parameters. In Section \ref{s:4} we analyze the full evolution
equation in the case where the system is isolated: we obtain a
fourth order hyperbolic equations and we show that the
initial-boundary-value problem is well posed. Then we define what
we mean by torsional stability and we state two sufficient
conditions for the stability. In Section \ref{numres} we
numerically compute the thresholds of stability, according to our
definition. In Section \ref{full} we validate our results from
several points of view: we show that the linearization and the
uncoupling procedures do not alter the results and that a full
numerical analysis does not give significantly different
responses. Sections \ref{stable proof} and \ref{secondproof} are
devoted to the proofs of the stability results. Finally, in
Section \ref{conclusions} we afford an answer to the above
questions.

\section{A nonlinear model for a dynamic suspension bridge}\label{nonmod}

We follow the mathematical model suggested in \cite{fergaz} by modifying it in some aspects. We view the roadway (or deck) of a suspension bridge as a
long narrow rectangular thin plate hinged at the two opposite short edges and free on the remaining two edges. Let $L$ denote its length and
$2\ell$ denote its width; a realistic assumption is that $2\ell\cong\frac{L}{100}$. The rectangular plate $\Omega\subset\R^2$ is then
$$\Omega=(0,L)\times(-\ell,\ell)\, .$$
Let us first discuss different positions of the plate depending on the forces acting on it. If the plate had no mass (as a sheet of paper) and there were no
loads acting on the plate, it would take the horizontal equilibrium position $u_0$, see Figure \ref{three}. If the plate was only subject to its own weight
$w$ (dead load) it would take a $\cup$-position such as $u_w$ in Figure \ref{three}. If the plate had no weight but it was subject to the restoring
force of the cables-hangers system, it would take a $\cap$-position such as $u_h$: this is also the position of the lower endpoints of the hangers before
the roadway is installed. If both the weight and the action of the hangers are considered, the two effects cancel and the equilibrium position $u_0\equiv0$
is recovered.
\begin{figure}[th]
\begin{center}
{\includegraphics[height=23mm, width=126mm]{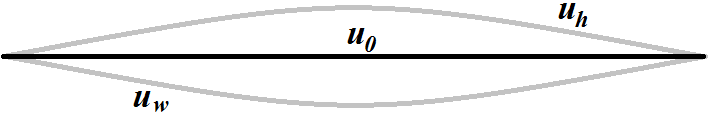}}
\caption{Different positions of the bridge.}\label{three}
\end{center}
\end{figure}

Since the bending energy of the plate vanishes when it is in position $u_0\equiv0$, the unknown function should be the displacement of the plate with respect
to the equilibrium $u_0$. Augusti-Sepe \cite{sepe1} (see also \cite{sepe2}) view the restoring force at the endpoints of a cross-section of the roadway as
composed by two connected springs, the top one representing the action of the sustaining cable and the bottom one (connected with the roadway)
representing the hangers, see Figure \ref{cableshangers}.
\begin{figure}[ht]
\begin{center}
{\includegraphics[height=30mm, width=42mm]{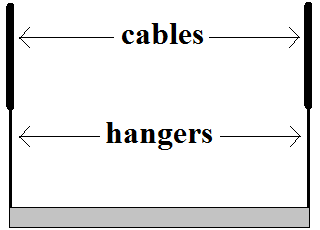}}
\caption{The cables+hangers system modeled with two connected springs.}\label{cableshangers}
\end{center}
\end{figure}

The action of the cables is considered by Bartoli-Spinelli \cite[p.180]{bartoli} the main cause of the nonlinearity of the restoring force: they suggest
quadratic and cubic perturbations of a linear behavior. If $u$ denotes the downwards displacement of the deck, here we simply take
\neweq{gu}
g(u)=k_1\, u+k_2\, u^3
\endeq
for some $k_1,k_2>0$ depending on the elasticity of the cables and hangers. Let us mention that Plaut-Davis \cite[$\S$ 3.5]{plautdavis} make
the same choice.\par
The action of the hangers on the roadway is confined in the union of two thin strips parallel and adjacent to the two long edges of the plate $\Omega$,
that is, in a set of the type
\neweq{omega}
\omega:=(0,L)\times [(-\ell,-\ell+\eps)\cup (\ell-\eps,\ell)]
\endeq
with $\eps>0$ small compared to $\ell$. Summarizing, we take as restoring force and potential due to the cables-hangers system
\neweq{particular1}
h(y,u)=\Upsilon(y)\, \Big(k_1\, u+k_2\, u^3\Big)\, ,\qquad
H(y,u)=\int_0^u h(y,\tau)d\tau=\Upsilon(y)\, \left(\frac{k_1}{2}\, u^2+\frac{k_2}{4}\, u^4\right)\, ,
\endeq
where $\Upsilon$ is the characteristic function of $(-\ell,-\ell+\eps)\cup (\ell-\eps,\ell)$.\par
The derivation of the bending energy of an elastic plate goes back to Kirchhoff \cite{Kirchhoff} and Love \cite{Love}, see also \cite{fergaz} for a
synthesized form. The total static energy of the bridge is obtained by adding the potential energy from \eq{particular1} to the bending energy of the plate:
\neweq{static}
\mathbb E_T(u)=\frac{E\, d^3}{12(1-\sigma^2)}\int_{\Omega}\left(\frac12 \left(\Delta u\right)^2+(\sigma-1)\det(D^2u)\right)+\int_\Omega\big(H(y,u)-fu\big)
\endeq
where $f$ is an external force, $d$ denotes the thickness of the plate, $E$ is the Young modulus and $\sigma$ is the Poisson ratio.
For a plate, the Poisson ratio is the negative ratio of transverse to axial strain: when a material is compressed in one direction, it tends to
expand in the other two directions. The Poisson ratio $\sigma$ is a measure of this effect, it is the fraction of expansion divided by the fraction
of compression for small values of these changes. Usually one has
\neweq{sigma}
0<\sigma<\frac{1}{2}\, .
\endeq

A functional space where the energy $\mathbb E_T$ is well-defined is
$$H^2_*(\Omega):=\Big\{w\in H^2(\Omega);\, w=0\mbox{ on }\{0,L\}\times (-\ell,\ell)\Big\}$$
which is a Hilbert space when endowed with the scalar product
\neweq{scalarp}
(u,v)_{H^2_*}:=\int_\Omega \left[\Delta u\Delta v+(1-\sigma)(2u_{xy}v_{xy}-u_{xx}v_{yy}-u_{yy}v_{xx})\right]\, dxdy\, ,
\endeq
see \cite[Lemma 4.1]{fergaz}. We also consider
$$\HH(\Omega):=\mbox{ the dual space of }H^2_*(\Omega)$$
and we denote by $\langle\cdot,\cdot\rangle$ the corresponding duality. Since we are in the plane, $H^2(\Omega)\subset C^0(\overline{\Omega})$
so that the condition on $\{0,L\}\times(-\ell,\ell)$ introduced in the definition of $H^2_*(\Omega)$ is satisfied pointwise.
If $f\in L^1(\Omega)$ then the functional $\mathbb E_T$ is well-defined in $H^2_*(\Omega)$, while if $f\in\HH(\Omega)$ we need to replace
$\int_\Omega fu$ with $\langle f,u\rangle$ although we will not mention this in the sequel.\par
If the load also depends on time, $f=f(x,y,t)$, and if $m$ denotes the mass density of the plate, then the
kinetic energy of the plate should be added to the static energy \eq{static}:
\neweq{totalenergy}
{\mathcal{E}_u}(t):=\frac{m}{2}\int_\Omega  u_t^2+\frac{E\, d^3}{12(1-\sigma^2)}
\int_\Omega\left(\frac{(\Delta u)^2}{2}+(\sigma-1)\det(D^2u)\right)+\int_\Omega\big(H(y,u)-fu\big)\, .
\endeq
This is the total energy of a nonlinear dynamic bridge. As for the action, one has to take the difference between kinetic
energy and potential energy and integrate over an interval of time $[0,T]$:
\begin{equation*}
\mathcal A(u):=\int_0^T\left[\frac{m}{2}\int_\Omega u_t^2 -\frac{E\, d^3}{12(1-\sigma^2)}
\int_\Omega\left(\frac{(\Delta u)^2}{2}+(\sigma-1)\det(D^2u)\right)-\int_\Omega\big(H(y,u)-fu\big)\right]\, dt\, .
\end{equation*}
The equation of the motion of the bridge is obtained by taking the critical points of the functional $\mathcal A$:
$$m\, u_{tt}+\frac{E\, d^3}{12(1-\sigma^2)}\, \Delta^2 u+h(y,u)=f \qquad \text{in } \Omega\times(0,T)\, .$$
Due to internal friction, we add a damping term and obtain
\neweq{wave-2}
\left\{\begin{array}{ll}
\!\! mu_{tt}\!+\!\delta u_t\!+\!\frac{E\, d^3}{12(1-\sigma^2)}\Delta^2 u\!+\!h(y,u)\!=\!f & \text{in } \Omega\!\times\!(0,T) \\
\!\! u(0,y,t)\!=\!u_{xx}(0,y,t)\!=\!u(L,y,t)\!=\!u_{xx}(L,y,t)\!=\!0 & \text{for } (y,t)\!\in\!(-\ell,\ell)\!\times\!(0,T) \\
\!\! u_{yy}(x,\pm\ell,t)\!+\!\sigma u_{xx}(x,\pm\ell,t)\!=\!0 & \text{for }(x,t)\!\in\!(0,L)\!\times\!(0,T)\\
\!\!u_{yyy}(x,\pm\ell,t)\!+\!(2-\sigma)u_{xxy}(x,\pm\ell,t)\!=\!0 & \text{for }(x,t)\!\in\!(0,L)\!\times\!(0,T)\\
\!\! u(x,y,0)\!=\!u_0(x,y)\, ,\quad u_t(x,y,0)\!=\!u_1(x,y)& \text{for }(x,y)\!\in\! \Omega
\end{array}\right.\endeq
where $\delta$ is a positive constant. We refer to \cite{fergaz} for the derivation of the boundary conditions.\par
For a different model of suspension bridges, similar to the one considered in \cite{arga}, Irvine \cite[p.176]{irvine} ignores damping of both structural
and aerodynamic origin. His purpose is to simplify as much as possible the model by maintaining its essence, that is, the conceptual design of bridges.
Here we follow this suggestion and consider the {\em isolated version} of \eq{wave-2} for which global existence is expected ($T=\infty$).
This isolated version of \eq{wave-2} reads
\begin{equation}\label{tac}
m w_{tt}\!+\! \frac{E\, d^3}{12(1-\sigma^2)}\Delta^2 w\!+\!\Upsilon(y)(k_1w+k_2w^3)\!=\!0\qquad\text{for }(x,y)\in(0,L)\times\left(-\ell,\ell\right),\,t>0\,,
\end{equation}
where $w$ denotes the downwards vertical displacement and all the constants are defined in Section \ref{nonmod}.\par
In order to set up a reliable model, we consider the lengths of the plate as in the collapsed TNB. According to \cite[p.11]{ammann}, we have
\neweq{proportion}
L=2800\, \mbox{ft.}\approx 853.44\, m\, ,\quad2\ell=39\, \mbox{ft.}\approx 11.89\, m\, ,\qquad\mbox{that is,}\qquad
\frac{2\ell}{L}=\frac{39}{2800}\approx\frac{1}{75}=\frac{\, \frac{2\pi}{150}\, }{\pi}\, .
\endeq
Therefore, we may scale the plate $(0,L)\times(-\ell,\ell)$ to $(0,\pi)\times(-\tfrac{\pi}{150},\tfrac{\pi}{150})$. Then we take the amplitude of the
strip $\omega$ (see \eq{omega}) containing the hangers also as at the TNB, see \cite[p.11]{ammann}:
\neweq{epsilon}
\eps=\frac{\pi}{1500}\, .
\endeq
Referring to Section \ref{numres} for the values of the involved parameters, we put
\neweq{gamma}
w(x,y,t)=\sqrt{\frac{k_1}{k_2}}\ u\left(\frac{\pi x}{L},\frac{\pi y}{L},\sqrt{\frac{k_1}{m}}\, t\right)\qquad\mbox{and}\qquad
\gamma= \frac{E\, d^3}{12k_1(1-\sigma^2)}\, \frac{\pi^4}{L^4}\, .
\endeq
Then \eq{tac} becomes an equation where the only parameter is the coefficient of the biharmonic term:
$$
u_{tt}+\gamma\, \Delta^2 u+\Upsilon(y)(u+u^3)=0\qquad\text{in }(0,\pi)\times\left(-\frac{\pi}{150},\frac{\pi}{150}\right)\times\R_+
$$
and $\Upsilon$ is the characteristic function of the set $(-\frac{\pi}{150},-\frac{3\pi}{500})\cup(\frac{3\pi}{500},\frac{\pi}{150})$.
Finally, we notice that for metals the value of $\sigma$ lies around $0.3$, see \cite[p.105]{Love}, while for concrete we have $0.1<\sigma<0.2$.
Since the suspended structure of the TNB consisted of a ``mixture'' of concrete and metal (see \cite[p.13]{ammann}), we take
\neweq{verosigma}
\sigma=0.2\, .
\endeq

Then by using \eq{gamma} we find the dimensionless version of the problem under study
\neweq{wave-3}
\left\{\begin{array}{ll}
\!\! u_{tt}\!+\!\gamma\Delta^2 u\!+\!\Upsilon(y)(u+u^3)\!=\!0 & \text{in } \Omega\!\times\!(0,\infty) \\
\!\! u(0,y,t)\!=\!u_{xx}(0,y,t)\!=\!u(\pi,y,t)\!=\!u_{xx}(\pi,y,t)\!=\!0 & \text{for } (y,t)\!\in\!(-\frac{\pi}{150},\frac{\pi}{150})
\!\times\!(0,\infty) \\
\!\!u_{yy}(x,\pm\frac{\pi}{150},t)\!+\!0.2\cdot u_{xx}(x,\pm\frac{\pi}{150},t)\!=\!0 & \text{for }(x,t)\!\in\!(0,\pi)\!\times\!(0,\infty)\\
\!\!u_{yyy}(x,\pm\frac{\pi}{150},t)\!+\!1.8\cdot u_{xxy}(x,\pm\frac{\pi}{150},t)\!=\!0 & \text{for }(x,t)\!\in\!(0,\pi)\!\times\!(0,\infty)\\
\!\! u(x,y,0)\!=\!u_0(x,y)\, ,\quad u_t(x,y,0)\!=\!u_1(x,y)& \text{for }(x,y)\!\in\! \Omega
\end{array}\right.\endeq
where $\Omega=(0,\pi)\times(-\frac{\pi}{150},\frac{\pi}{150})$. The initial-boundary value problem \eq{wave-3} is isolated, which means that it
has a conserved quantity. This quantity is the energy introduced in \eq{totalenergy} which is constant in time:
\neweq{energy}
E(u)=\int_\Omega \frac12 u_t^2\, dxdy+\int_\Omega\left(\frac{\gamma}{2} (\Delta u)^2+\frac{4\gamma}{5} (u_{xy}^2-u_{xx}u_{yy})+
\Upsilon(y)\,\left(\frac{u^2}{2}+\frac{u^4}{4}\right)\right)\, dxdy\, .
\endeq

\section{The eigenfunctions of the linearized problem}\label{modes}

For the rectangular plate $\Omega=(0,\pi)\times(-\ell,\ell)$ with
Poisson ratio $\sigma$ we are interested in the eigenfunctions of
the eigenvalue problem
\begin{equation}\label{eq:Eigenvalue}
\begin{cases}
\Delta^2 w=\lambda w & \qquad \text{in } \Omega \\
w(0,y)=w_{xx}(0,y)=w(\pi,y)=w_{xx}(\pi,y)=0 & \qquad \text{for } y\in (-\ell,\ell) \\
w_{yy}(x,\pm\ell)+\sigma
w_{xx}(x,\pm\ell)=w_{yyy}(x,\pm\ell)+(2-\sigma)w_{xxy}(x,\pm\ell)=0
& \qquad \text{for } x\in (0,\pi)\, .
\end{cases}
\end{equation}
Problem \eqref{eq:Eigenvalue} admits the following variational formulation: a nontrivial function $w\in H^2_*(\Omega)$ is an eigenfunction
of \eqref{eq:Eigenvalue} if there exists $\lambda\in\R$ (an eigenvalue) such that
$$
\int_\Omega \left[\Delta w \Delta v+(1-\sigma)(2w_{xy}v_{xy}-w_{xx}v_{yy}-w_{yy}v_{xx})-\lambda
wv\right]\, dxdy=0\qquad \text{for all }v\in H^2_*(\Omega)\, .
$$

We recall from \cite[Theorem 7.6]{fergaz} a statement describing the whole spectrum and characterizing the eigenfunctions.
It is shown there that the eigenfunctions may have one of the following forms:

\begin{proposition}\label{eigenvalue}
Assume \eqref{sigma}. Then the set of eigenvalues of \eqref{eq:Eigenvalue} may be ordered in an increasing sequence
$\{\lambda_k\}$ of strictly positive numbers diverging to $+\infty$ and any eigenfunction belongs to
$C^\infty(\overline\Omega)$; the set of eigenfunctions of \eqref{eq:Eigenvalue} is a complete system in $H^2_*(\Omega)$. Moreover:\par\noindent
$(i)$ for any $m\ge1$, there exists a unique eigenvalue $\lambda=\mu_{m,1}\in((1-\sigma)^2m^4,m^4)$ with corresponding eigenfunction
$$\left[\big[\mu_{m,1}^{1/2}-(1-\sigma)m^2\big]\, \tfrac{\cosh\Big(y\sqrt{m^2+\mu_{m,1}^{1/2}}\Big)}{\cosh\Big(\ell\sqrt{m^2+\mu_{m,1}^{1/2}}\Big)}+
\big[\mu_{m,1}^{1/2}+(1-\sigma)m^2\big]\, \tfrac{\cosh\Big(y\sqrt{m^2-\mu_{m,1}^{1/2}}\Big)}{\cosh\Big(\ell\sqrt{m^2-\mu_{m,1}^{1/2}}\Big)}\right]\sin(mx)\, ;$$
$(ii)$ for any $m\ge1$, there exist infinitely many eigenvalues $\lambda=\mu_{m,k}>m^4$ ($k\ge2$) with corresponding eigenfunctions
$$
\left[\big[\mu_{m,k}^{1/2}-(1-\sigma)m^2\big]\, \tfrac{\cosh\Big(y\sqrt{\mu_{m,k}^{1/2}+m^2}\Big)}{\cosh\Big(\ell\sqrt{\mu_{m,k}^{1/2}+m^2}\Big)}
+\big[\mu_{m,k}^{1/2}+(1-\sigma)m^2\big]\, \tfrac{\cos\Big(y\sqrt{\mu_{m,k}^{1/2}-m^2}\Big)}{\cos\Big(\ell\sqrt{\mu_{m,k}^{1/2}-m^2}\Big)}\right]\sin(mx)\, ;
$$
$(iii)$ for any $m\ge1$, there exist infinitely many eigenvalues $\lambda=\nu_{m,k}>m^4$ ($k\ge2$) with corresponding eigenfunctions
$$
\left[\big[\nu_{m,k}^{1/2}-(1-\sigma)m^2\big]\, \tfrac{\sinh\Big(y\sqrt{\nu_{m,k}^{1/2}+m^2}\Big)}{\sinh\Big(\ell\sqrt{\nu_{m,k}^{1/2}+m^2}\Big)}
+\big[\nu_{m,k}^{1/2}+(1-\sigma)m^2\big]\, \tfrac{\sin\Big(y\sqrt{\nu_{m,k}^{1/2}-m^2}\Big)}{\sin\Big(\ell\sqrt{\nu_{m,k}^{1/2}-m^2}\Big)}\right]\sin(mx)\, ;
$$
$(iv)$ for any $m\ge1$ satisfying $\ell m\sqrt 2\, \coth(\ell m\sqrt2 )>\left(\frac{2-\sigma}{\sigma}\right)^2$ there exists an
eigenvalue $\lambda=\nu_{m,1}\in(\mu_{m,1},m^4)$ with corresponding eigenfunction
$$\left[\big[\nu_{m,1}^{1/2}-(1-\sigma)m^2\big]\, \tfrac{\sinh\Big(y\sqrt{m^2+\nu_{m,1}^{1/2}}\Big)}{\sinh\Big(\ell\sqrt{m^2+\nu_{m,1}^{1/2}}\Big)}
+\big[\nu_{m,1}^{1/2}+(1-\sigma)m^2\big]\, \tfrac{\sinh\Big(y\sqrt{m^2-\nu_{m,1}^{1/2}}\Big)}{\sinh\Big(\ell\sqrt{m^2-\nu_{m,1}^{1/2}}\Big)}\right]
\sin(mx)\, .$$
Finally, if the unique positive solution $s>0$ of the equation
\neweq{iff}
\tanh(\sqrt{2}s\ell)=\left(\frac{\sigma}{2-\sigma}\right)^2\, \sqrt{2}s\ell
\endeq
is not an integer, then the only eigenvalues and eigenfunctions are the ones given in $(i)-(iv)$.
\end{proposition}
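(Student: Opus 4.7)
The plan is to reduce the two-dimensional eigenvalue problem to a one-parameter family of fourth-order ODE eigenvalue problems in $y$, via separation of variables adapted to the hinged boundary on $\{0,\pi\}\times(-\ell,\ell)$. First, the general spectral framework: the bilinear form in \eqref{scalarp} is coercive on $H^2_*(\Omega)$ (by \cite[Lemma 4.1]{fergaz}, as already cited), so the operator $\Delta^2$ with the stated boundary conditions is self-adjoint with compact resolvent on $L^2(\Omega)$. Standard spectral theory then gives an increasing sequence of positive eigenvalues diverging to $+\infty$ and an $H^2_*$-complete system of eigenfunctions; $C^\infty$-regularity follows from elliptic regularity up to the boundary (interior smoothness plus the free boundary operators forming a complementing system).

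Next I would separate variables. Since the hinged conditions at $x=0,\pi$ together with $\Delta^2 w=\lambda w$ force $w_{xxxx}(0,y)=w_{xxxx}(\pi,y)=0$, repeated differentiation shows all even $x$-derivatives vanish at $0,\pi$, so each eigenfunction expands in a Fourier sine series $w(x,y)=\sum_m\phi_m(y)\sin(mx)$. The PDE and the $y$-boundary conditions decouple mode by mode, so it suffices to look for eigenfunctions of the form $\phi(y)\sin(mx)$. Substitution yields
\begin{equation*}
\phi''''(y)-2m^2\phi''(y)+(m^4-\lambda)\phi(y)=0,\qquad y\in(-\ell,\ell),
\end{equation*}
with boundary conditions $\phi''(\pm\ell)-\sigma m^2\phi(\pm\ell)=0$ and $\phi'''(\pm\ell)-(2-\sigma)m^2\phi'(\pm\ell)=0$. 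The characteristic roots are $r^2=m^2\pm\sqrt{\lambda}$, and the sign of $m^2-\sqrt{\lambda}$ splits the analysis into a hyperbolic-hyperbolic regime ($\lambda<m^4$) and a hyperbolic-trigonometric regime ($\lambda>m^4$). Because both the ODE and the four boundary conditions are invariant under $y\mapsto -y$, each $\phi$ decomposes as the sum of an even and an odd part, each still an eigenfunction; I would treat the two parities separately, which is precisely the split between the $\mu_{m,k}$ family (even in $y$, cases (i)–(ii)) and the $\nu_{m,k}$ family (odd in $y$, cases (iii)–(iv)).

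For the even case with $\lambda<m^4$, I write $\phi(y)=A\cosh\bigl(y\sqrt{m^2+\sqrt{\lambda}}\bigr)+B\cosh\bigl(y\sqrt{m^2-\sqrt{\lambda}}\bigr)$; the two boundary conditions at $y=\ell$ (those at $-\ell$ hold by parity) form a $2\times 2$ homogeneous linear system in $(A,B)$, and nontriviality forces its determinant $F_m(\lambda)$ to vanish. I would then study $F_m$ on $(0,m^4)$: checking the signs at the endpoints and using monotonicity of the relevant $\tanh$/coefficient combinations, I locate a unique root $\mu_{m,1}$ in $((1-\sigma)^2m^4,m^4)$. Reading off the null-vector $(A,B)$ from either boundary equation and rescaling gives the normalized eigenfunction in (i). The case $\lambda>m^4$ is handled by replacing the second $\cosh$ with a $\cos$; the vanishing determinant becomes an equation involving $\tan\bigl(\ell\sqrt{\sqrt{\lambda}-m^2}\bigr)$, which by periodicity and asymptotics of $\tan$ has infinitely many roots $\mu_{m,k}$, $k\geq2$, producing the eigenfunctions in (ii). The odd case is entirely analogous, with $\cosh\to\sinh$ and $\cos\to\sin$: for $\lambda>m^4$ one gets infinitely many roots $\nu_{m,k}$ (case (iii)), while for $\lambda<m^4$ the associated determinant has a root in $(\mu_{m,1},m^4)$ \emph{only} when a sign/transversality condition holds at $\lambda=m^4$; this condition, which one derives by passing to the limit $\sqrt{\lambda}\to m^2$ where the two hyperbolic sines coalesce, is precisely $\ell m\sqrt{2}\,\coth(\ell m\sqrt{2})>((2-\sigma)/\sigma)^2$, giving case (iv).

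Finally, exhaustiveness: by completeness of $\{\sin(mx)\}$ in $L^2(0,\pi)$, any eigenfunction $w$ of \eqref{eq:Eigenvalue} has Fourier coefficients $\phi_m(y)$ that are themselves solutions of the ODE boundary value problem above with the same $\lambda$; hence $\phi_m$ is in the span of the bases enumerated in (i)–(iv) for the integer $m$ in question. The residual worry is the degenerate case where some $m$ coincides with the positive root $s$ of \eqref{iff}: this $s$ is exactly where the transversality condition for $\nu_{m,1}$ fails, and additional eigenfunctions (a polynomial-in-$y$ factor arising from coalescing exponentials) could appear; assuming $s\notin\mathbb{N}$ rules this out and yields the final assertion. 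The main technical obstacle is the root-counting and localization for the transcendental determinants, together with a careful treatment of the coincidence case $\sqrt{\lambda}=m^2$ where the two fundamental solutions with exponents $\pm\sqrt{m^2-\sqrt{\lambda}}$ degenerate and extra care is needed to justify the interval $((1-\sigma)^2m^4,m^4)$ in (i) and the bound $>m^4$ in (ii)–(iii).
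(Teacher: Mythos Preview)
The paper does not actually prove this proposition: it is explicitly \emph{recalled} from \cite[Theorem 7.6]{fergaz}, and what follows the statement in the paper is simply the list of transcendental equations determining the eigenvalues, not a proof. So there is no ``paper's own proof'' to compare against here.

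That said, your outline is the standard route and almost certainly the one taken in \cite{fergaz}: coercivity of the form \eqref{scalarp} gives the abstract spectral structure; the hinged conditions at $x=0,\pi$ force a sine expansion; the ODE in $y$ with the two free-edge conditions splits by parity; and root-counting for the resulting $2\times2$ determinants produces cases (i)--(iv). Your identification of the exceptional situation is also correct: when $\lambda=m^4$ the characteristic roots degenerate to $\pm m\sqrt{2}$ and a double root at $0$, so the odd fundamental solutions become $\sinh(m\sqrt{2}\,y)$ and $y$, and imposing the boundary conditions yields precisely \eqref{iff} with $s=m$. The extra eigenfunction is then of the form $\big(A\sinh(m\sqrt{2}\,y)+By\big)\sin(mx)$ rather than a genuine ``polynomial-in-$y$ factor'', but the mechanism you describe is right. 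One small point: your justification that every eigenfunction is a sum of separated ones is cleaner if you argue directly that for an eigenfunction $w$ with eigenvalue $\lambda$, each Fourier coefficient $\phi_m(y)=\frac{2}{\pi}\int_0^\pi w(x,y)\sin(mx)\,dx$ satisfies the reduced ODE and boundary conditions, hence is either zero or one of the listed profiles; the ``all even $x$-derivatives vanish at $0,\pi$'' line is not needed.
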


Of course, \eq{iff} has probability 0 to occur in a real bridge; if it occurs, there is an additional eigenvalue and eigenfunction, see \cite{fergaz}.
The eigenvalues $\lambda_k$ are solutions of explicit equations. More precisely:\par\noindent
$(i)$ the eigenvalue $\lambda=\mu_{m,1}$ is the unique value $\lambda\in((1-\sigma)^2m^4,m^4)$ such that
$$
\sqrt{m^2\!-\!\lambda^{1/2}}\big(\lambda^{1/2}\!+\!(1\!-\!\sigma)m^2\big)^2\tanh(\ell\sqrt{m^2\!-\!\lambda^{1/2}})\!=\!\sqrt{m^2\!+\!\lambda^{1/2}}
\big(\lambda^{1/2}\!-\!(1\!-\!\sigma)m^2\big)^2\tanh(\ell\sqrt{m^2\!+\!\lambda^{1/2}})\, ;
$$
$(ii)$ the eigenvalues $\lambda=\mu_{m,k}$ ($k\ge2$) are the solutions $\lambda>m^4$ of the equation
$$
\sqrt{\lambda^{1/2}\!-\!m^2}\big(\lambda^{1/2}\!+\!(1\!-\!\sigma)m^2\big)^2\tan(\ell\sqrt{\lambda^{1/2}\!-\!m^2})\!=\!-\!\sqrt{\lambda^{1/2}\!+\!m^2}
\big(\lambda^{1/2}\!-\!(1\!-\!\sigma)m^2\big)^2\tanh(\ell\sqrt{\lambda^{1/2}\!+\!m^2})\, ;
$$
$(iii)$ the eigenvalues $\lambda=\nu_{m,k}$ ($k\ge2$) are the solutions $\lambda>m^4$ of the equation
$$
\sqrt{\lambda^{1/2}\!-\!m^2}\big(\lambda^{1/2}\!+\!(1\!-\!\sigma)m^2\big)^2\tanh(\ell\sqrt{\lambda^{1/2}\!+\!m^2})\!=\!\sqrt{\lambda^{1/2}\!+\!m^2}
\big(\lambda^{1/2}\!-\!(1\!-\!\sigma)m^2\big)^2\tan(\ell\sqrt{\lambda^{1/2}\!-\!m^2})\, ;
$$
$(iv)$ the eigenvalue $\lambda=\nu_{m,1}$ is the unique value $\lambda\in((1-\sigma)^2m^4,m^4)$ such that
$$
\sqrt{m^2\!-\!\lambda^{1/2}}\big(\lambda^{1/2}\!+\!(1\!-\!\sigma)m^2\big)^2\tanh(\ell\sqrt{\lambda^{1/2}\!+\!m^2})\!=\!\sqrt{\lambda^{1/2}\!+\!m^2}
\big(\lambda^{1/2}\!-\!(1\!-\!\sigma)m^2\big)^2\tanh(\ell\sqrt{\lambda^{1/2}\!-\!m^2})\, .
$$

The least eigenvalue is $\lambda_1=\mu_{1,1}$: the corresponding
eigenfunction is of one sign over $\Omega$ and this fact is by far
nontrivial. It is well-known that the first eigenfunction of some
biharmonic problems may change sign. When $\Omega$ is a square,
Coffman \cite{coffman} proved that the first eigenfunction of the
clamped plate problem changes sign, see also \cite{kkm} for more
general results. Moreover, Knightly-Sather \cite[Section
3]{knight} show that the buckling eigenvalue problem for a fully
hinged (simply supported) rectangular plate, that is with
$u=\Delta u=0$ on the four edges, may admit a least eigenvalue of
multiplicity 2. Hence, the positivity of the first eigenfunction
of \eq{eq:Eigenvalue} is not for free. Due to the
$L^2$-orthogonality of eigenfunctions, it is the only positive
eigenfunction of \eq{eq:Eigenvalue}.\par The eigenfunctions in
$(i)-(ii)$ are even with respect to $y$ whereas the eigenfunctions
in $(iii)-(iv)$ are odd. We call {\bf longitudinal eigenfunctions}
the eigenfunctions of the kind $(i)-(ii)$ and {\bf torsional
eigenfunctions} the eigenfunctions of the kind $(iii)-(iv)$. Since
$\ell$ is small, the former are essentially of the kind
$c_m\sin(mx)$ whereas the latter are of the kind $c_my\sin(mx)$.
The pictures in Figure \ref{kable} display the first two
longitudinal eigenfunctions (approximately described by
$c_1\sin(x)$ and $c_2\sin(2x)$) and the second torsional
eigenfunction (approximately described by $c_2y\sin(2x)$). In each
picture the displacement of the roadway is compared with
equilibrium.

\begin{figure}[ht]
\begin{center}
{\includegraphics[height=29mm, width=76mm]{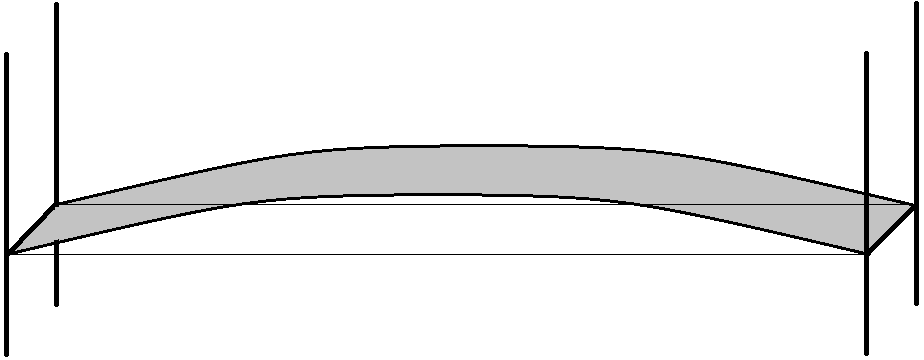}}\qquad{\includegraphics[height=29mm, width=76mm]{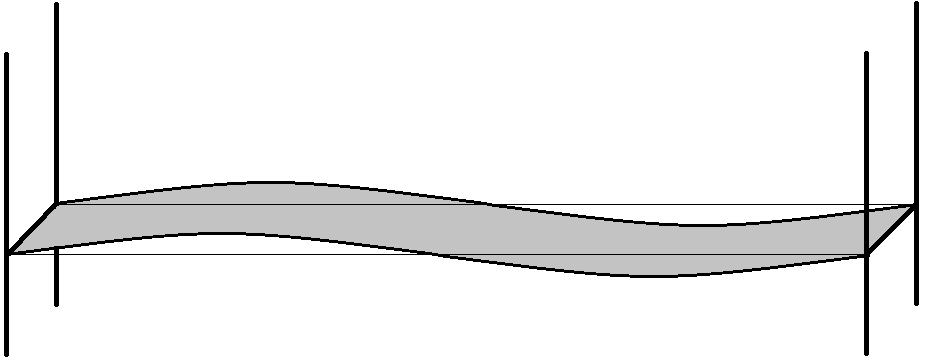}}

{\includegraphics[height=29mm, width=76mm]{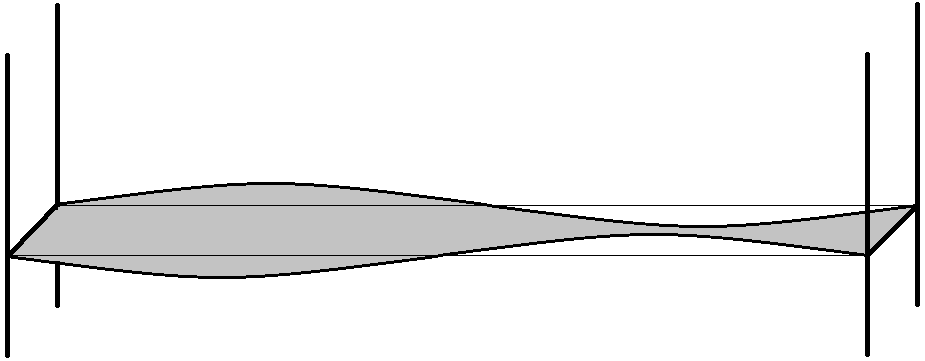}}

\caption{Some possible oscillations of a bridge roadway.}\label{kable}
\end{center}
\end{figure}

Of particular interest is the lower picture in Figure \ref{kable}
which corresponds to a torsional eigenfunction with a node at
midspan. This is precisely the behavior of the TNB prior to its
collapse in November 1940, see the video \cite{tacoma} and Figure
\ref{tacoma12}. This is also the behavior of the Brighton Chain
Pier, see Figure \ref{brighton}, and of the other bridges
described in the introduction. With the notations of Proposition
\ref{eigenvalue}, this common behavior of suspension bridges may
be rephrased as follows
\renewcommand{\arraystretch}{1.1}
\neweq{rephrase}
\begin{array}{ccc}
\mbox{the oscillations causing the collapse of a suspension bridge}\\
\mbox{are of the kind $c_2y\sin(2x)$, as represented in the bottom picture of Figure \ref{kable},}\\
\mbox{and correspond to the eigenvalue $\nu_{2,2}$, as given by Proposition \ref{eigenvalue}.}
\end{array}
\endeq\renewcommand{\arraystretch}{1.5}

We remark that the eigenfunction corresponding to the eigenvalue $\nu_{1,1}$ is of the kind $c_2y\sin(x)$ but this eigenvalue in general does not exist
since the inequality in Proposition \ref{eigenvalue} $(iv)$ is usually satisfied only for large $m$.\par
By \eq{proportion} we may take
\neweq{ellpi}
\ell=\frac{\pi}{150}\, .
\endeq
With the choices in \eq{verosigma} and \eq{ellpi} we numerically
obtained the eigenvalues of \eq{eq:Eigenvalue} as reported in
Table \ref{tableigen}. We only quote the least 16 eigenvalues
because we are mainly interested in the second torsional
eigenvalue which is, precisely, the 16th.

\begin{table}[ht]
\begin{center}
\begin{tabular}{|c|c|c|c|c|c|c|c|c|}
\hline
eigenvalue & $\lambda_1$ & $\lambda_2$ & $\lambda_3$ & $\lambda_4$ & $\lambda_5$ & $\lambda_6$ & $\lambda_7$ & $\lambda_8$\\
\hline
kind & $\mu_{1,1}$ & $\mu_{2,1}$ & $\mu_{3,1}$ & $\mu_{4,1}$ & $\mu_{5,1}$ & $\mu_{6,1}$  & $\mu_{7,1}$ & $\mu_{8,1}$\\
\hline
$\sqrt{\rm{eigenvalue}}\approx$ & $0.98$ & $3.92$ & $8.82$ & $15.68$ & $24.5$ & $35.28$ & $48.02$ & $62.73$\\
\hline
\end{tabular}
\par\smallskip
\begin{tabular}{|c|c|c|c|c|c|c|c|c|}
\hline
eigenvalue & $\lambda_9$ & $\lambda_{10}$ & $\lambda_{11}$ & $\lambda_{12}$ & $\lambda_{13}$ & $\lambda_{14}$ & $\lambda_{15}$ & $\lambda_{16}$\\
\hline
kind & $\mu_{9,1}$  & $\mu_{10,1}$ & $\nu_{1,2}$ & $\mu_{11,1}$ & $\mu_{12,1}$ & $\mu_{13,1}$ & $\mu_{14,1}$ & $\nu_{2,2}$\\
\hline
$\sqrt{\rm{eigenvalue}}\approx$ & $79.39$ & $98.03$ & $104.61$ & $118.62$ & $141.19$ & $165.72$ & $192.21$ & $209.25$\\
\hline
\end{tabular}
\caption{Approximate value of the least 16 eigenvalues of \eq{eq:Eigenvalue} for $\sigma=0.2$ and $\ell=\frac{\pi}{150}$.}\label{tableigen}
\end{center}
\end{table}

Our results are obtained with the parameters of the TNB, see
\eq{proportion}, \eq{verosigma} and \eq{ellpi}. As already
mentioned in the introduction, Farquharson \cite[V-10]{ammann}
witnessed the collapse and wrote that {\em the motions, which a
moment before had involved a number of waves (nine or ten) had
shifted almost instantly to two}. Note that the longitudinal
eigenvalue immediately preceding the least torsional eigenvalue is
$\mu_{10,1}$: it involves the function $\sin(10x)$ which has
precisely ``ten waves''. This explains why at the TNB the
torsional instability occurred when the bridge was longitudinally
oscillating like $\sin(10x)$. Then, if no constraint acts on the
deck, the energy should transfer to the eigenfunction
corresponding to $\nu_{1,2}$ which has a behavior like $y\sin(x)$.
Among longitudinal eigenfunctions, the tenth is the most prone to
torsional instability since the ratio between torsional
eigenvalues and longitudinal eigenvalues is minimal (close to 1)
precisely for the tenth longitudinal eigenfunction: why small
ratios yield strong instability is explained for a simplified
model through Mathieu equations in \cite{bgz}. However, in the
case of a bridge, the sustaining cable yields a serious
constraint. With a rude approximation, the cables may be
considered as inextensible. A better point of view is that they
are only ``weakly extensible'', which means that their elongation
cannot bee too large. In Figure \ref{duepponti}
\begin{figure}[ht]
\begin{center}
 {\includegraphics[height=25mm, width=160mm]{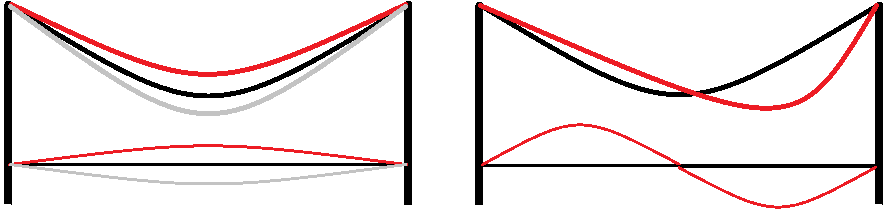}}
\caption{Elongation of the cable generated by the oscillations of the deck.}\label{duepponti}
\end{center}
\end{figure}
we represent the deformation of a cable in the two situations
where the deck behaves like $\sin(x)$ and like $\sin(2x)$. It
turns out that the no-noded behavior $\sin(x)$ (on the left) only
allows small vertical displacements of the deck (between the grey
and red positions) and, therefore, small torsional oscillations of
the kind $y\sin(x)$. This is confirmed by numerical experiments
for models where the cable plays a dominant role, see
\cite{bergotcivati} where it is shown that, basically, ``the first
mode does not exist'' in actual bridges. On the contrary, the
one-noded behavior $\sin(2x)$ allows much larger torsional
oscillations of the kind $y\sin(2x)$, see the right picture. Our
explanation of the transition described by Farquharson is that
when the oscillation $\sin(10x)$ became sufficiently large,
reaching the threshold of the torsional instability, the cable
forced the transition to the eigenfunction $y\sin(2x)$ instead of
$y\sin(x)$. This gives a sound explanation to \eq{rephrase} and a
first answer to {\bf (Q2)}, see Section \ref{numres}. \par If we
slightly modify the choices in \eq{verosigma} and \eq{ellpi},
still in the range of the TNB, we obtain the eigenvalues of
\eq{eq:Eigenvalue} as reported in Table \ref{tableigen2}.

\begin{table}[ht]
\begin{center}
\begin{tabular}{|c|c|c|c|c|c|c|c|c|}
\hline
eigenvalue & $\lambda_1$ & $\lambda_2$ & $\lambda_3$ & $\lambda_4$ & $\lambda_5$ & $\lambda_6$ & $\lambda_7$ & $\lambda_8$\\
\hline
kind & $\mu_{1,1}$ & $\mu_{2,1}$ & $\mu_{3,1}$ & $\mu_{4,1}$ & $\mu_{5,1}$ & $\mu_{6,1}$  & $\mu_{7,1}$ & $\mu_{8,1}$\\
\hline
$\sqrt{\rm{eigenvalue}}\approx$ & $0.97$ & $3.87$ & $8.71$ & $15.49$ & $24.21$ & $34.87$ & $47.46$ & $62$\\
\hline
\end{tabular}
\par\smallskip
\begin{tabular}{|c|c|c|c|c|c|c|c|c|}
\hline
eigenvalue & $\lambda_9$ & $\lambda_{10}$ & $\lambda_{11}$ & $\lambda_{12}$ & $\lambda_{13}$ & $\lambda_{14}$ & $\lambda_{15}$ & $\lambda_{16}$\\
\hline
kind & $\mu_{9,1}$  & $\mu_{10,1}$ & $\nu_{1,2}$ & $\mu_{11,1}$ & $\mu_{12,1}$ & $\mu_{13,1}$ & $\mu_{14,1}$ & $\nu_{2,2}$\\
\hline
$\sqrt{\rm{eigenvalue}}\approx$ & $78.48$ & $96.9$ & $97.24$ & $117.27$ & $139.58$ & $163.84$ & $190.1$ & $194.51$\\
\hline
\end{tabular}
\caption{Approximate value of the least 16 eigenvalues of \eq{eq:Eigenvalue} for $\sigma=0.25$ and $\ell=\frac{\pi}{144}$.}\label{tableigen2}
\end{center}
\end{table}

While comparing with Table \ref{tableigen}, it is noticeable that all the eigenvalues have slightly lowered but the qualitative behavior and the corresponding
explanation remain the same.

\section{Torsional stability of the longitudinal modes}\label{s:4}

\subsection{Existence, uniqueness, and finite dimensional approximation of the solution}

Let $\Upsilon$ be the characteristic function of the set
$(-\frac{\pi}{150},-\frac{3\pi}{500})\cup(\frac{3\pi}{500},\frac{\pi}{150})$
and let
\neweq{husata}
h(y,u)=\Upsilon(y)\,\Big(u+u^3\Big) \, .
\endeq
We say that
\neweq{regularity2}
u\in C^0(\R_+;H^2_*(\Omega))\cap C^1(\R_+;L^2(\Omega))\cap C^2(\R_+;\HH(\Omega))
\endeq
is a solution of \eq{wave-3} if it satisfies the initial conditions and if
\neweq{weaksolhyper}
\langle u''(t),v\rangle+\gamma
(u(t),v)_{H^2_*}+(h(y,u(t)),v)_{L^2}=0\qquad \forall v\in
H^2_*(\Omega)\, ,\ \forall t\in(0,T)\, ,
\endeq
see \eq{scalarp}. Then, by arguing as in \cite[Theorem 3.6]{fergaz} we may prove the following result.

\begin{theorem}\label{hyperbolic}
Assume \eqref{sigma}. Let $u_0\in H^2_*(\Omega)$ and $u_1\in L^2(\Omega)$. Then
there exists a unique solution $u=u(t)$ of \eqref{wave-3} and its energy \eqref{energy} satisfies
$$E(u(t))\equiv\int_\Omega\frac12 u_1^2\, dxdy+\int_\Omega\left(\frac{\gamma}{2}(\Delta u_0)^2+\frac{4\gamma}{5} ((u_0)_{xy}^2-(u_0)_{xx}(u_0)_{yy})
+\Upsilon(y)\,\left(\frac{u_0^2}{2}+\frac{u_0^4}{4}\right)\right)\, dxdy\, .$$
\end{theorem}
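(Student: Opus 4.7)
The plan is to follow the classical Faedo--Galerkin scheme, as signalled by the reference to \cite[Theorem 3.6]{fergaz}. The scheme is well-suited to \eq{wave-3} because the eigenfunctions of Proposition \ref{eigenvalue} form a complete orthogonal basis of $H^2_*(\Omega)$, and because in dimension two the embedding $H^2_*(\Omega)\hookrightarrow C^0(\overline{\Omega})$ makes the cubic nonlinearity $h(y,u)=\Upsilon(y)(u+u^3)$ map $H^2_*(\Omega)$ continuously into $L^2(\Omega)$ and be Lipschitz on bounded sets.

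First I fix a Hilbert basis $\{w_k\}_{k\ge 1}$ of eigenfunctions of \eq{eq:Eigenvalue}, set $V_n:=\mathrm{span}(w_1,\dots,w_n)$, and seek $u_n(t)=\sum_{k=1}^n\alpha_k^{(n)}(t)\,w_k$ solving the finite-dimensional system obtained by projecting \eq{weaksolhyper} onto $V_n$, with initial data the projections of $u_0,u_1$ onto $V_n$. Standard ODE theory yields a local smooth $u_n$, and testing the Galerkin equation against $u_n'(t)$ gives $\frac{d}{dt}E(u_n(t))\equiv 0$. Since the cable-hanger potential is nonnegative, this produces a uniform a priori bound on $\|u_n'(t)\|_{L^2}^2+\gamma\|u_n(t)\|_{H^2_*}^2$, uniform in $n$ and $t$ because $E(u_n(0))\to E(u_0,u_1)$; hence $u_n$ is global.

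Next I pass to the limit. The bounds above, combined with the equation itself (using that $\|h(y,u_n(t))\|_{L^2}$ is controlled by $\|u_n(t)\|_{H^2_*}^3$), give $u_n''$ bounded in $L^\infty(0,T;\HH(\Omega))$. Extracting a subsequence, $u_n\rightharpoonup^* u$ in $L^\infty(0,T;H^2_*(\Omega))$ and $u_n'\rightharpoonup^* u'$ in $L^\infty(0,T;L^2(\Omega))$, and the Aubin--Lions lemma yields strong convergence $u_n\to u$ in $C^0([0,T];H^s(\Omega))$ for any $s<2$. Choosing $s\in(1,2)$ one has uniform convergence on $\overline\Omega$, which suffices to pass to the limit in $h(y,u_n)\to h(y,u)$ in $C^0([0,T];L^2(\Omega))$ and to conclude that $u$ has the regularity \eq{regularity2}, satisfies \eq{weaksolhyper}, and attains the prescribed initial conditions.

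For uniqueness I compare two solutions $u,\widetilde u$ with the same data: their difference $v=u-\widetilde u$ satisfies the linearised equation in which the nonlinear remainder equals $\Upsilon(y)\,v\,(1+u^2+u\widetilde u+\widetilde u^2)$. Testing formally against $v_t$ and using the embedding $H^2_*(\Omega)\hookrightarrow L^\infty(\Omega)$ to bound the coefficient gives
$$\frac{d}{dt}\Big(\|v_t(t)\|_{L^2}^2+\gamma\|v(t)\|_{H^2_*}^2\Big)\le C\Big(\|v_t(t)\|_{L^2}^2+\|v(t)\|_{H^2_*}^2\Big),$$
and Gronwall forces $v\equiv 0$. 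The main technical point is the justification of this testing and, in parallel, of the energy identity in the theorem: at the regularity \eq{regularity2} the pairing $\langle u'',u'\rangle$ is only formal, and the standard remedy is to regularise in time (for instance by Steklov averages), apply the identity to the regularised problem, and pass to the limit; combined with the weak lower semicontinuity inequality $E(u(t))\le\liminf_{n\to\infty}E(u_n(t))$, this upgrades the a priori bound $E(u(t))\le E(u_0,u_1)$ to equality. The coercivity of $(\cdot,\cdot)_{H^2_*}$ on $H^2_*(\Omega)$ needed throughout is ensured by \eq{sigma} via \cite[Lemma 4.1]{fergaz}.
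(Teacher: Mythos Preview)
Your proposal is correct and follows essentially the same Galerkin approach as the paper's sketch: project onto the span of the first $n$ eigenfunctions of \eqref{eq:Eigenvalue}, solve the resulting ODE system, obtain uniform energy bounds, and pass to the limit. The paper merely outlines the finite-dimensional setup and then defers the compactness, uniqueness, and energy-conservation details to \cite[Theorem 3.6]{fergaz}; you have filled in precisely those steps (Aubin--Lions compactness, the Gronwall uniqueness argument exploiting $H^2_*\hookrightarrow L^\infty$, and the time-regularisation to justify $\langle u'',u'\rangle$), so your write-up is in fact more complete than the paper's own sketch.
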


\noindent
{\em Sketch of the proof of Theorem \ref{hyperbolic}}. The proof of Theorem \ref{hyperbolic} makes use of a Galerkin method. The solution of
\eqref{wave-3} is the limit (in a suitable topology) of a sequence of solutions of approximated problems in finite dimensional spaces. By
Proposition \ref{eigenvalue} we may
consider an orthogonal complete system $\{w_k\}_{k\ge 1}\subset H^2_*(\Omega)$ of eigenfunctions of \eqref{eq:Eigenvalue} such that $\|w_k\|_{L^2}=1$.
Let $\{\lambda_k\}_{k\ge 1}$ be the corresponding eigenvalues and, for any $m\ge 1$, put $W_m:={\rm span }\{w_1,\dots,w_m\}$. For any $m\ge1$ let
$$u_0^m:=\sum_{i=1}^m (u_0,w_i)_{L^2}w_i =\sum_{i=1}^m\lambda_i^{-1}(u_0,w_i)_{H^2_*} \, w_i\ \mbox{ and }\ u_1^m=\sum_{i=1}^m (u_1,w_i)_{L^2} \, w_i$$
so that $u_0^m\to u_0$ in $H^2_*(\Omega)$ and $u_1^m\to u_1$ in $L^2(\Omega)$ as $m\to +\infty$. Fix $T>0$; for any $m\ge 1$ one seeks a
solution $u_m\in C^2([0,T];W_m)$ of the variational problem
\begin{equation}\label{eq:P-W-k}
\begin{cases}
(u''(t),v)_{L^2}+\gamma(u(t),v)_{H^2_*}+(h(y,u(t)),v)_{L^2}=0\\
u(0)=u_0^m\, ,\quad u'(0)=u_1^m
\end{cases}
\end{equation}
for any $v\in W_m$ and $t\in (0,T)$. If we put
\neweq{um}
u_m(t)=\sum_{i=1}^m g_i^m(t) w_i\qquad\mbox{and}\qquad g^m(t):=(g_1^m(t),\dots,g_m^m(t))^T
\endeq
then the vector valued function $g^m$ solves
\begin{equation} \label{eq:p-g-k}
\begin{cases}
(g^m(t))''+\gamma\,\Lambda_m g^m(t)+\Phi_m(g^m(t))=0 \qquad\forall t\in(0,T) \\
g^m(0)=((u_0,w_1)_{L^2},\dots,(u_0,w_m)_{L^2})^T \, ,\quad (g^m)'(0)=((u_1,w_1)_{L^2},\dots,(u_1,w_m)_{L^2})^T
\end{cases}
\end{equation}
where $\Lambda_m:={\rm diag}(\lambda_1,\dots,\lambda_m)$ and $\Phi_m:\R^m\to \R^m$ is the map defined by
$$
\Phi_m(\xi_1,\dots,\xi_m):=\left(\Big(h\Big(y,\sum_{j=1}^m \xi_j w_j\Big),w_1\Big)_{L^2},\dots,
\Big(h\Big(y,\sum_{j=1}^m \xi_j w_j\Big),w_m\Big)_{L^2}\right)^T\, .
$$
{From} \eq{husata} we deduce that $\Phi_m\in {\rm
Lip_{loc}}(\R^m;\R^m)$ and hence \eqref{eq:p-g-k} admits a unique
solution. Whence, the function $u_m(t)$ in \eq{um} belongs
$C^2([0,T);H^2_*(\Omega))$ and is a solution of the problem
\begin{equation} \label{eq:Prob-k}
\begin{cases}
u_m''(t)+\gamma\,Lu_m(t)+P_m(h(y,u_m(t)))=0 \qquad \text{for any } t\ge0\\
u_m(0)=u_0^m \, ,\quad u_m'(0)=u_1^m
\end{cases}
\end{equation}
where $L:H^2_*(\Omega)\to \mathcal H(\Omega)$ is implicitly defined by $\langle Lu,v\rangle:=(u,v)_{H^2_*}$ for any $u,v\in H^2_*(\Omega)$,
and $P_m$ is the orthogonal projection from $H^2_*(\Omega)$ onto $W_m$. By arguing as in \cite{fergaz}, one finds that the sequence $\{u_m\}$
converges in $C^0([0,T];H^2_*(\Omega))\cap C^1([0,T];L^2(\Omega))$ to a solution of \eqref{wave-3}.\qquad$\Box$\par\medskip

The above proof shows that the
solution of \eq{wave-3} may be obtained as the limit of a finite dimensional analysis performed with a finite number of modes. Let us fix some value $E>0$ for
\eq{energy}. Depending on the value of $E$, higher modes may be dropped, see \cite[Section 3.3]{bergaz} for a detailed physical motivation and for some
quantitative estimates. Our purpose is to study the torsional stability of the low modes in a sense that will be made precise in next section.
In particular, the results obtained in Section \ref{modes} suggest to focus the attention on the lowest $16$ modes, including the two least torsional modes.

\subsection{A theoretical characterization of torsional stability} \label{s:4.2}

In this section we give a precise definition of torsional
stability. We point out that this might not be the only possible
definition. However, the numerical results reported in Sections
\ref{numres} and \ref{full} show that our characterization well
describes the instability.\par We take again $h(y,u)$ as in
\eqref{husata} and we fix $m=16$ which is the position of the
second torsional mode. From Proposition \ref{eigenvalue} and Table
\ref{tableigen} we know that the eigenvalues and the
$L^2$-normalized eigenfunctions up to the 16th are given by
$$
\lambda_k=\left \{ \begin{array}{llll}
\!\!\mu_{k,1} & \text{if } 1\leq k\leq 10\\
\!\!\nu_{1,2} & \text{if } k=11\\
\!\!\mu_{k-1,1} & \text{if } 12\leq k\leq 15\\
\!\!\nu_{2,2} & \text{if } k=16
\end{array}
\right.
\qquad\qquad
w_k(x,y)=\left \{ \begin{array}{llll}
\!\! \tfrac{ v_k(y)\sin (kx)} {\omega_k }  & \mbox{if } 1\leq k\leq 10\\
\!\! \tfrac{ \theta_1(y)\sin(x)} {\bar \omega_1}  & \mbox{if } k=11\\
\!\!  \tfrac{v_{k-1}(y)\sin((k-1)x)} {\omega_{k-1}}  & \text{if } 12\leq k\leq 15\\
\!\!  \tfrac{\theta_2(y)\sin(2x)} {\bar \omega_2}  & \text{if } k=16
\end{array}\right.
$$
with
$$v_k(y):=\Big[\frac{k^2}{5}-\beta_k^{-}\Big]\, \tfrac{\cosh\Big(y\sqrt{\beta_k^{+}}\Big)}{\cosh\Big(\frac{\pi}{150}\sqrt{\beta_k^{+}}\Big)}+
\Big[\beta_k^{+}-\frac{k^2}{5}\Big]\, \tfrac{\cosh\Big(y\sqrt{\beta_k^{-}}\Big)}{\cosh\Big(\frac{\pi}{150}\sqrt{\beta_k^{-}}\Big)}\qquad
(k=1,...,14)\,,$$
$$\theta_k(y):=\Big[\frac{k^2}{5}+\alpha_{k}^{-} \Big]\, \tfrac{\sinh\Big(y\sqrt{\alpha_{k}^{+}}\Big)}{\sinh\Big(\frac{\pi}{150}\sqrt{\alpha_{k}^{+}}\Big)}+
\Big[\alpha_{k}^{+}-\frac{k^2}{5} \Big]\, \tfrac{\sin\Big(y\sqrt{\alpha_{k}^{-}}\Big)}{\sin\Big(\frac{\pi}{150}\sqrt{\alpha_{k}^{-}}\Big)}
\qquad(k=1,2)\,.$$
where $\beta_k^{\pm}:=k^2\pm \mu_{k,1}^{1/2}$ (for $k=1,...,14$), $\alpha_{k}^{\pm}:=\nu_{k,2}^{1/2}\pm k^2$ (for $k=1,2$) and
\neweq{omegakk}
\omega_k^2=\pi\int_0^{\frac{\pi}{150}}v_k^2(y)\,dy\qquad(k=1,...,14)\quad,\quad \bar \omega_k^2=\pi\int_0^{\frac{\pi}{150}}\theta_k^2(y)\,dy\qquad(k=1,2)\,.
\endeq
Notice that the $v_k$ are even with respect to $y$, while $\theta_1$ and $\theta_2$ are odd.\par
Following the Galerkin procedure described in the proof of Theorem \ref{hyperbolic}, we seek solutions of \eq{eq:P-W-k} in the form
$$u(x,y,t)=\sum_{k=1}^{14}\varphi_k(t)\, \frac{v_k(y)\sin(kx)}{\omega_k}+\sum_{k=1}^{2}\tau_k(t)\, \frac{\theta_k(y)\sin(kx)}{\bar \omega_k}$$
where the functions $\varphi_k$ and $\tau_k$ are to be determined. Take $h$ as in \eq{husata} and, for all $(\varphi_1,...,\varphi_{14},\tau_1,\tau_2)\in\R^{16}$, put
$$\Phi_k(\varphi_1,...,\varphi_{14},\tau_1,\tau_2)=\Big(h\Big(y,\sum_{j=1}^{14}\varphi_j \tfrac{ v_j(y)\sin(jx) } {\omega_j } +\sum_{j=1}^{2}\tau_j  \tfrac{\theta_j(y)\sin(jx) } {\bar \omega_j } \Big),\tfrac{ v_k(y)\sin(kx) } {\omega_k }  \Big)_{L^2}\quad(k=1,...,14)$$
$$\Gamma_k(\varphi_1,...,\varphi_{14},\tau_1,\tau_2)=\Big(h\Big(y,\sum_{j=1}^{14}\varphi_j \tfrac{v_j(y)\sin(jx)} {\omega_j } +\sum_{j=1}^{2}\tau_j \tfrac{\theta_j(y)\sin(jx)} {\bar \omega_j } \Big),
\tfrac{ \theta_k(y)\sin(kx)} {\bar \omega_k } \Big)_{L^2} \quad (k=1,2)\, .$$
Then \eq{eq:Prob-k} becomes the system of ODE's:
\begin{equation} \label{odedifficile2}
\left \{
\begin{array}{ll}
\varphi_k''(t)+\gamma\,\mu_{k,1} \varphi_k(t)+\Phi_k\big(\varphi_1(t),...,\varphi_{14}(t),\tau_1(t),\tau_2(t)\big)=0 & (k=1,...,14)\\
\tau_k''(t)+\gamma\,\nu_{k,2} \tau_k(t)+\Gamma_k\big(\varphi_1(t),...,\varphi_{14}(t),\tau_1(t),\tau_2(t)\big)=0 & (k=1,2)
\end{array}
\right.
\end{equation}
for all $t\in(0,T)$. For all $1\leq k\leq 14$ we put $\Psi_k\big(\varphi_k(t)\big)=\Phi_k(0,..., \varphi_k(t),...,0)$. By taking into account that
$$\int_0^\pi \sin^2(kx)\, dx=\frac{\pi}{2}\ ,\qquad\int_0^\pi \sin^4(kx)\, dx=\frac{3\pi}{8}\ ,\qquad\forall k\ge1\ ,$$
and that $v_k$ is even with respect to $y$, some computations yield
\begin{equation} \label{psik}
\Psi_k\big(\varphi_k(t)\big)= a_k\varphi_k(t)+b_k \varphi_k^3(t)\,,
\end{equation}
where
\neweq{akbk}
a_k= \frac{\pi}{\omega_k^2}\, \int_{\frac{3\pi}{500}}^{\frac{\pi}{150}} v_k^2(y)\,dy \quad \text{and} \quad
b_k= \frac{3\pi}{4 \omega_k^4}\,\int_{\frac{3\pi}{500}}^{\frac{\pi}{150}} v_k^4(y)\,dy\qquad(k=1,...,14)\,.
\endeq
In particular, by combining \eq{akbk} with \eq{omegakk} we see that
$$
a_k=\frac{\|v_k\|_{L^2(\frac{3\pi}{500},\frac{\pi}{150})}^2}{\|v_k\|_{L^2(0,\frac{\pi}{150})}^2}<1\quad ,\quad
b_k=\frac{3}{4\pi}\, \frac{\|v_k\|_{L^4(\frac{3\pi}{500},\frac{\pi}{150})}^4}{\|v_k\|_{L^2(0,\frac{\pi}{150})}^4}\, .
$$

We may now define what we mean by longitudinal mode. We point out that this is a classical definition in a linear regime while it is by no means standard
how to characterize modes in nonlinear regimes; contrary to the linear case, the frequency of a nonlinear mode depends on the energy or,
equivalently, on the amplitude of its oscillations.

\begin{definition}\label{oscillmodebaro}
Let $1\leq k\leq 14$, $\R^2\ni(\phi_0^k,\phi_1^k)\neq(0,0)$ and $\Psi_k$ as in \eqref{psik}.
We call $k$-th \textbf{longitudinal mode} at energy $E(\phi_0^k,\phi_1^k)>0$ the unique (periodic) solution $\overline\varphi_k$ of the Cauchy problem:
\begin{equation} \label{ode}
\begin{cases}
\varphi''_k(t)+\gamma \mu_{k,1} \varphi_k(t)+\Psi_k(\varphi_k(t))=0 \qquad\forall t>0 \\
\varphi_k(0)=\phi_0^k\, ,\quad \varphi_k'(0)=\phi_1^k\,.
\end{cases}
\end{equation}
\end{definition}

If it were $\Psi_k\equiv0$ then the equation \eq{ode} would be linear and Definition \ref{oscillmodebaro} would coincide with the usual one: in this case, there
would be no need to emphasize the dependence on the energy since the solution with initial data $\varphi_k(0)=\alpha\phi_0^k$ and
$\varphi_k'(0)=\alpha\phi_1^k$ (for any $\alpha$) would coincide with the solution of \eq{ode} multiplied by $\alpha$.
In view of \eq{psik}, we have instead a nonlinear equation and \eq{ode} becomes
\begin{equation}\label{ode1}
\begin{cases}
\varphi''_k(t)+(\gamma \mu_{k,1}+a_k) \varphi_k(t)+b_k \varphi_k^3(t)=0 \qquad\forall t>0 \\
\varphi_k(0)=\phi_0^k\, ,\quad \varphi_k'(0)=\phi_1^k\, .
\end{cases}
\end{equation}
The system \eq{ode1} admits the conserved quantity
\neweq{energyEk}
E=\frac{(\varphi'_k)^2}{2}+(\gamma \mu_{k,1}+a_k)\frac{\varphi_k^2}{2}+b_k\frac{\varphi_k^4}{4}\equiv E(\phi_0^k,\phi_1^k)=
\frac{(\phi_1^k)^2}{2}+(\gamma \mu_{k,1}+a_k)\frac{(\phi_0^k)^2}{2}+b_k\frac{(\phi_0^k)^4}{4}\,.
\endeq
Any couple of initial data having the same energy leads to the same solution of \eq{ode1} up to a time translation while it is no longer true that
multiplying the initial data by a constant leads to proportional solutions; it is well-known that different energies yield different frequencies of
the solution, see also \eq{period} below.\par
In order to define the torsional stability of a longitudinal mode $\overline\varphi_k$, we linearize the last two equations of system \eqref{odedifficile2} around
$(0,...,\overline\varphi_k(t),...,0)\in\R^{16}$. These two equations correspond, respectively, to the first and second torsional mode. In both cases we obtain
a Hill equation of the type
\neweq{Hill}
\xi''(t)+A_{l,k}(t) \xi(t)=0\,,
\endeq
where, for every $1\leq k\leq 14$ and $l=1,2$, we set
\neweq{Alk}
A_{l,k}(t)=\gamma \nu_{l,2}+\bar a_{l}+ d_{l,k}\overline \varphi_k^2(t)
\endeq
with
\neweq{aldlk}
\bar a_l=\frac{\pi}{\bar \omega_l^2} \int_{\frac{3\pi}{500}}^{\frac{\pi}{150}} \theta_{l}^2(y)\,dy
=\frac{\|\theta_l\|_{L^2(\frac{3\pi}{500},\frac{\pi}{150})}^2}{\|\theta_l\|_{L^2(0,\frac{\pi}{150})}^2}<1\, ,\qquad
d_{l,k}= \left \{
\begin{array}{lll}
\frac{9\pi}{4 \omega_l^2 \bar \omega_l^2} \int_{\frac{3\pi}{500}}^{\frac{\pi}{150}} v_l^2(y)\theta_l^2(y)\,dy  & \mbox{if } l=k\\
\frac{3\pi}{2 \omega_k^2 \bar \omega_l^2} \int_{\frac{3\pi}{500}}^{\frac{\pi}{150}} v_k^2(y)\theta_l^2(y)\,dy & \mbox{if } l\neq k\,.
\end{array}
\right.
\endeq
For the computation of these coefficients we used the facts that the integrals containing odd powers of $\theta_l(y)$ vanish and that
$$
\int_0^\pi \sin^2(lx)\sin^2(kx)\, dx=\left\{\begin{array}{ll}
\frac{3\pi}8\ & \mbox{ if }l= k\\
\frac{\pi}4\ & \mbox{ if }l\neq k\, .\end{array}\right.
$$

Since \eq{Hill} is a {\em linear} equation with periodic
coefficients the notion of stability of its trivial solution is
standard. This enables us to define the torsional stability of a
longitudinal mode.

\begin{definition}\label{newdeff}
Fix $1\leq k\leq 14$ and $l=1,2$. We say that the $k$-th longitudinal mode $\overline \varphi_k$ at energy $E(\phi_0^k,\phi_1^k)$, namely the unique periodic
solution of \eqref{ode1}, is {\bf stable with respect to the $l$-th torsional mode} if the trivial solution of \eqref{Hill} is stable.
\end{definition}

\subsection{Sufficient conditions for the torsional stability}\label{secsuffcond}

It is well-known that the stability regions for the Hill equations may have strange shapes such as {\em pockets} and {\em resonance tongues}, see
e.g.\ \cite{broer,broer2}. Therefore, the theoretical stability analysis of any such equation has to deal with these shapes and with the lack
of a precise characterization of the stability regions. For \eq{Hill}, the theoretical obstruction is essentially related to the following condition
\neweq{strange}
\sqrt{\frac{\gamma \nu_{l,2}+\bar a_{l}}{\gamma \mu_{k,1}+ a_{k}} }\not\in\N
\endeq
where $\gamma$ is defined in \eq{gamma} while $a_k$ and $\bar a_{l}$ are defined, respectively, in \eq{akbk} and in \eq{aldlk}.
The usual difficulties are further increased for \eq{Hill} which, instead of a single equation, represents {\em a family of Hill equations having coefficients with
periods depending on the energy} of the solution of \eq{ode1}.\par
Below we discuss in some detail assumption \eq{strange}. But let us start the stability analysis with the following sufficient condition
for the stability of a longitudinal mode.

\begin{theorem}\label{stable}
Fix $1\leq k\leq 14$, $l\in\{1,2\}$ and assume that \eqref{strange} holds. Then there exists $E_k^l>0$ and a strictly increasing function $\Lambda$
such that $\Lambda(0)=0$ and such that the $k$-th longitudinal mode $\overline \varphi_k$ at energy $E(\phi_0^k,\phi_1^k)$
(that is, the solution of \eqref{ode1}) is stable with respect to the $l$-th torsional mode provided that
$$E\le E_k^l\, $$
or, equivalently, provided that
$$\|\overline \varphi_k\|_\infty^2\le \Lambda(E_k^l)\,.$$
\end{theorem}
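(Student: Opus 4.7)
The plan is to exploit the fact that at energy $E=0$ the forcing $\overline\varphi_k$ vanishes, so \eqref{Hill} degenerates to a constant-coefficient harmonic oscillator whose monodromy matrix has spectrum strictly inside the unit circle precisely under condition \eqref{strange}; stability for small positive $E$ will then follow from Floquet theory and continuous dependence.

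First I would extract from the conservation law \eqref{energyEk} the correspondence between $E$ and the amplitude $M(E):=\|\overline\varphi_k\|_\infty$: it is the unique positive root of $E=(\gamma\mu_{k,1}+a_k)M^2/2+b_kM^4/4$, and since the right-hand side is strictly increasing in $M\ge 0$, the map $\Lambda(E):=M(E)^2$ is well-defined, continuous, strictly increasing, and $\Lambda(0)=0$. This yields the equivalence between the two bounds in the statement. A standard quadrature gives
$$T(E)=4\int_0^{M(E)}\frac{d\varphi}{\sqrt{2E-(\gamma\mu_{k,1}+a_k)\varphi^2-b_k\varphi^4/2}}$$
for the period of $\overline\varphi_k$; this depends continuously on $E\ge 0$ with the nondegenerate limit $T(0)=2\pi/\sqrt{\gamma\mu_{k,1}+a_k}$, and since $\overline\varphi_k$ is antisymmetric about each zero, $\overline\varphi_k^2$ has period $T(E)/2$.

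Next, to apply Floquet theory uniformly in $E$, I would rescale time by $T(E)/2$, viewing \eqref{Hill} as a family of equations with $1$-periodic coefficients depending continuously on $E$. The associated monodromy matrix $\mathcal M(E)\in\mathrm{SL}(2,\R)$ then depends continuously on $E$, and the trivial solution of \eqref{Hill} is strictly Lyapunov stable whenever $|\operatorname{tr}\mathcal M(E)|<2$. At $E=0$ the equation reduces to $\xi''+(\gamma\nu_{l,2}+\bar a_l)\xi=0$ and a direct computation over the half-period $T(0)/2=\pi/\sqrt{\gamma\mu_{k,1}+a_k}$ gives
$$\operatorname{tr}\mathcal M(0)=2\cos\!\left(\pi\sqrt{\frac{\gamma\nu_{l,2}+\bar a_l}{\gamma\mu_{k,1}+a_k}}\,\right).$$
Hypothesis \eqref{strange} ensures that the argument of the cosine is not an integer multiple of $\pi$, hence $|\operatorname{tr}\mathcal M(0)|<2$ strictly. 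By continuity this inequality persists on some maximal interval $[0,E_k^l]$, giving stability in the sense of Definition \ref{newdeff} throughout that range.

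The delicate point is the continuous dependence of $\mathcal M(E)$ on $E$ up to $E=0$, which requires the period $T(E)$ to remain bounded away from $0$ and $\infty$ as $E\to 0^+$. This is guaranteed by the quadrature above, since the linear restoring coefficient $\gamma\mu_{k,1}+a_k$ is strictly positive and the integral degenerates in a controlled way to the linearized period in the limit. No other subtlety arises, because \eqref{strange} places $E=0$ strictly inside the Floquet stability region rather than on the boundary of a resonance tongue; the latter scenario, which corresponds to the violation of \eqref{strange}, is exactly the parametric-resonance obstruction alluded to before the statement.
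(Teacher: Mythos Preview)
Your argument is correct: at $E=0$ the Hill equation \eqref{Hill} becomes the constant-coefficient oscillator $\xi''+(\gamma\nu_{l,2}+\bar a_l)\xi=0$, and over the half-period $T(0)/2=\pi/\sqrt{\gamma\mu_{k,1}+a_k}$ the monodromy trace is exactly $2\cos\!\big(\pi\sqrt{(\gamma\nu_{l,2}+\bar a_l)/(\gamma\mu_{k,1}+a_k)}\big)$, which lies strictly in $(-2,2)$ precisely under \eqref{strange}. Your rescaling to a fixed period, together with the continuous dependence of $\overline\varphi_k$ and $T(E)$ on $E$, makes the monodromy $\mathcal M(E)$ continuous up to $E=0$, so the strict Floquet stability inequality $|\operatorname{tr}\mathcal M(E)|<2$ persists on a nontrivial interval $[0,E_k^l]$. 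The identification of $\Lambda(E)$ with the squared amplitude via \eqref{energyEk} is exactly what the paper does.

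The route, however, is genuinely different from the paper's. Rather than appealing to Floquet theory and continuity of the trace, the paper invokes Zhukovskii's classical stability criterion for Hill equations: the trivial solution of \eqref{Hill} is stable whenever the periodic coefficient $A_{l,k}(t)$ is trapped between two consecutive ``resonance levels'' $4m^2\pi^2/T_k(E)^2$ and $4(m+1)^2\pi^2/T_k(E)^2$. This reduces stability to the two explicit inequalities
\[
\frac{4m^2\pi^2}{\gamma\nu_{l,2}+\bar a_l}\ \le\ T_k(E)^2\ \le\ \frac{4(m+1)^2\pi^2}{\gamma\nu_{l,2}+\bar a_l+d_{l,k}\Lambda_-^k(E)}\,,
\]
with $m$ the integer part of $\sqrt{(\gamma\nu_{l,2}+\bar a_l)/(\gamma\mu_{k,1}+a_k)}$; assumption \eqref{strange} guarantees both hold with strict inequality at $E=0$, and continuity of the explicit functions $T_k(E)$ and $\Lambda_-^k(E)$ does the rest. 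What this buys over your approach is \emph{quantitativity}: the paper stresses immediately after the statement that the result is ``not a perturbation result'' and that the threshold $E_k^l$ can be computed explicitly (this is the content of Theorem~\ref{stableprecise}), since $T_k(E)$ is given by a concrete elliptic integral and $\Lambda_-^k(E)$ by a closed formula. Your trace argument is cleaner conceptually and perhaps more natural, but determining $E_k^l$ from it would require solving the Hill equation to evaluate $\operatorname{tr}\mathcal M(E)$, whereas the Zhukovskii bounds are written directly in terms of the data of the problem.
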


Theorem \ref{stable} {\em is not} a perturbation result. The proof given in Section \ref{stable proof} allows us to determine {\em explicit} values
of $E_k^l$ and $\Lambda(E_k^l)$. Here we stated Theorem \ref{stable} in a qualitative form in order not to spoil the statement with too many constants.
We refer to Theorem \ref{stableprecise} for the precise value of $E_k^l$ and $\Lambda(E_k^l)$.\par
Assumption \eq{strange} is a generic assumption, it has probability 1 to occur among all random choices of the positive real numbers $\gamma$, $\nu_{l,2}$,
$\bar a_{l}$, $\mu_{k,1}$, $a_{k}$. But even in the case where \eq{strange} fails we may obtain a sufficient condition for the torsional stability of
longitudinal modes.

\begin{theorem}\label{stable2}
Fix $1\leq k\leq 14$, $l\in\{1,2\}$ and assume that there exists $m\in\N$ such that
\neweq{strange2}
\frac{\gamma \nu_{l,2}+\bar a_{l}}{\gamma \mu_{k,1}+ a_{k}}=(m+1)^2\, .
\endeq
Assume moreover that
\neweq{strange3}
2\Big(2+(m+1)\pi\Big)d_{l,k}<3\pi(m+1)^3b_k\, .
\endeq
Then the same conclusions of Theorem \ref{stable} hold.
\end{theorem}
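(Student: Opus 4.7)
The plan is to follow the strategy of Theorem \ref{stable}: recast the Hill equation \eqref{Hill} in rescaled time and apply a Zhukovskii-type stability criterion, which ensures that the zero solution of $\tilde \xi''(s) + \tilde A(s)\tilde \xi(s) = 0$ (with $\pi$-periodic $\tilde A$) is stable as soon as $n^2 \le \tilde A(s) \le (n+1)^2$ for some integer $n$ and every $s$. The novel difficulty with respect to Theorem \ref{stable} is that hypothesis \eqref{strange2} places the system exactly on the boundary of the Zhukovskii zone $n=m$ when $E=0$; the proof must therefore use the nonlinear frequency shift of the Duffing oscillator \eqref{ode1} to push us strictly inside that zone.

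Set $\alpha_k := \gamma \mu_{k,1} + a_k$ and let $\omega_k(E) := 2\pi/T_k(E)$ denote the energy-dependent frequency of $\overline\varphi_k$. The time change $s = \omega_k(E)\,t$ brings \eqref{Hill} to
$$\tilde\xi''(s) + \tilde A(s)\,\tilde\xi(s) = 0, \qquad \tilde A(s) := \frac{(m+1)^2 \alpha_k + d_{l,k}\,\overline\varphi_k^{\,2}(s/\omega_k(E))}{\omega_k(E)^2},$$
where \eqref{strange2} has been used and where $\tilde A$ is $\pi$-periodic since $\overline\varphi_k^{\,2}$ is $T_k/2$-periodic. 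Choosing $n=m$ in Zhukovskii's criterion splits the task in two. The lower bound $m^2 \le \tilde A(s)$ amounts to $m^2 \omega_k(E)^2 \le (m+1)^2 \alpha_k$, which holds for $E$ small by continuity (and is non-trivially verifiable only because $b_k>0$ forces $\omega_k(E)$ to depend on $E$). The upper bound $\tilde A(s) \le (m+1)^2$, evaluated at the maximum of $\overline\varphi_k^{\,2}$, reduces to the single inequality
$$(m+1)^2 \bigl(\omega_k(E)^2 - \alpha_k\bigr) \;\ge\; d_{l,k}\,\|\overline\varphi_k\|_\infty^2.$$

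The crux is a sharp, non-asymptotic lower bound on the nonlinear frequency shift $\omega_k(E)^2 - \alpha_k$ in terms of $\|\overline\varphi_k\|_\infty^2$. Using the representation of the period as a complete elliptic integral, $T_k = 4K(\kappa)/\sqrt{\alpha_k + b_k\|\overline\varphi_k\|_\infty^2}$ with modulus $\kappa^2 = b_k\|\overline\varphi_k\|_\infty^2/(2(\alpha_k + b_k\|\overline\varphi_k\|_\infty^2))$, and estimating $K(\kappa)$ sharply for small $\kappa$, one derives the coefficient $3\pi/(2(2+(m+1)\pi))$ appearing in \eqref{strange3}; the factor $2 + (m+1)\pi$ combines the Taylor remainder of $K$ with the gap in Zhukovskii's inequality at level $n=m$. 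The main obstacle is precisely this quantitative estimate: the naive expansion $K(\kappa) = \tfrac{\pi}{2}(1 + \kappa^2/4 + \cdots)$ yields a weaker coefficient than \eqref{strange3}, so a more refined elliptic-integral inequality—equivalently, a sharp control of $\int_0^{T_k/2}\overline\varphi_k^{\,2}\,dt$ in terms of $T_k\,\|\overline\varphi_k\|_\infty^2$—is needed to match the constant in \eqref{strange3}. Once this bound is in hand, both Zhukovskii inequalities hold for $E \le E_k^l$ with an explicit threshold $E_k^l$, and Definition \ref{newdeff} yields the same conclusion as in Theorem \ref{stable}.
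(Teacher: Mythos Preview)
Your plan has a genuine gap: the Zhukovskii criterion is too coarse to reach the hypothesis \eqref{strange3}. The reduction you give is correct---after your rescaling, the upper Zhukovskii bound reads $(m+1)^2(\omega_k(E)^2-\alpha_k)\ge d_{l,k}\|\overline\varphi_k\|_\infty^2$. But the ratio on the left is fixed at leading order: from $T_k'(0)=-\tfrac{3\pi b_k}{2\rho_k^{5/2}}$ one gets $\omega_k(E)^2-\alpha_k\sim\tfrac{3b_k}{2\rho_k}E$, while $\|\overline\varphi_k\|_\infty^2=\Lambda_-^k(E)\sim\tfrac{2E}{\rho_k}$, so the Zhukovskii inequality forces $4d_{l,k}\le 3(m+1)^2 b_k$. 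This is strictly stronger than \eqref{strange3} for every $m\ge 0$, and no ``sharper elliptic-integral inequality'' can help, because you are comparing two quantities whose ratio is determined by their first-order Taylor coefficients at $E=0$. The paper makes exactly this point in Remark~\ref{several}: Zhukovskii yields a more restrictive condition, and one must switch to a different criterion.

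The paper instead applies Burdina's criterion, which requires
\[
m\pi\;\le\;\int_0^{T_k(E)/2}\!\!\sqrt{A_{l,k}(t)}\,dt\;\pm\;\frac12\log\!\left(\frac{\max_t A_{l,k}(t)}{\min_t A_{l,k}(t)}\right)\;\le\;(m+1)\pi.
\]
The lower bound follows by continuity. For the upper bound the paper expands, as $E\to 0$, the integral (using a pointwise bound $\overline\varphi_k(t)\le\sqrt{2E/\rho_k}\,\sin(\sqrt{\rho_k}\,t)$ and Lemma~\ref{derivataT}) and the logarithm separately; this yields
\[
\int_0^{T_k/2}\sqrt{A_{l,k}}\,dt+\tfrac12\log\!\Big(\tfrac{\max A}{\min A}\Big)\;\le\;(m+1)\pi+\Big(\tfrac{\pi d_{l,k}}{2(m+1)}+\tfrac{d_{l,k}}{(m+1)^2}-\tfrac{3\pi(m+1)b_k}{4}\Big)\tfrac{E}{\rho_k^2}+o(E),
\]
and the sign condition on the bracket is exactly \eqref{strange3}. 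So the factor $2+(m+1)\pi$ you tried to attribute to a refined bound on $K(\kappa)$ and ``the gap in Zhukovskii's inequality'' in fact comes from summing the integral and logarithmic contributions in Burdina's test---it is not accessible from Zhukovskii at all.
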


Theorem \ref{stable2}, which is proved in Section \ref{secondproof}, raises the attention on the further technical assumption \eq{strange3}.
We are confident that it might be weakened (see Remark \ref{several} at the end of Section \ref{secondproof}) and, perhaps, completely
removed (see \cite{bgz}). The reason is that, although they are not explicitly known, the stability regions for the Hill equations
are very precise and, for the model problem \eq{Hill}, there is an additional energy parameter which could vary the stability
regions. However, we will not discuss \eq{strange3} here.\par
Theorems \ref{stable} and \ref{stable2} state that a crucial role is played by the amount of energy inside the system. In the next sections we
make some numerical experiments on the equations \eq{Hill} and we study the stability of the least $14$ longitudinal modes of the plate with the TNB
parameters. Our results show that for each longitudinal mode there exists a critical energy threshold $E_k^l$ under which the solution of \eq{Hill}
is stable while for larger energies the solution may be unstable: we also show that different initial data with the same total energy give the same
stability response. Not only this enables us to numerically compute the threshold $E_k^l$ and to evaluate the power of the sufficient condition given in
Theorems \ref{stable} and \ref{stable2}, but also to define a flutter energy for each longitudinal mode as a threshold of stability.

\begin{definition}\label{flutter} We call \textbf{flutter energy} of the $k$-th longitudinal mode $\overline \varphi_k$ (that is, the solution of \eqref{ode})
the positive number $\overline{E}_k$ being the supremum of the energies $E_k^l$ such that the trivial solution of \eqref{Hill} is stable for both $l=1$ and $l=2$.
\end{definition}

Concerning the bridge model, Theorems \ref{stable} and \ref{stable2} lead to the conclusion that\par\noindent
{\em if the internal energy $E$ is smaller than the flutter energy then small initial torsional oscillations remain small for all time $t>0$,
whereas if $E$ is larger than the flutter energy (that is, the longitudinal oscillations are initially large) then small torsional oscillations
may suddenly become wider}.

\section{Numerical computation of the flutter energy}\label{numres}

First, by using the eigenvalues found in Table \ref{tableigen}, we numerically compute the parameters $a_k$ and $b_k$ in \eq{akbk}. It turns out that
all the $a_k$ are equal to $0.1$ up to an error of less than $10^{-3}$: for this reason, in Table \ref{ak01} we quote the values of $10^4(a_k-0.1)$. Moreover,
all the $b_k$ are equal to $1.1$ up to an error of less than $10^{-1}$: we quote the values of $10^2(b_k-1.1)$.

\begin{table}[ht]
\begin{center}
\begin{tabular}{|c|c|c|c|c|c|c|c|c|c|c|c|c|c|c|}
\hline
$k$ & 1 & 2 & 3 & 4 & 5 & 6 & 7 & 8 & 9 & 10 & 11 & 12 & 13 & 14 \\
\hline
$10^4(a_k\!-\!0.1)$ & 0.05 & 0.2 & 0.45 & 0.8 & 1.24 & 1.78 & 2.42 & 3.14 & 3.96 & 4.86 & 5.85 & 6.92 & 8.06 & 9.28 \\
\hline
$10^2(b_k\!-\!1.1)$ & 4 & 4.03 & 4.09 & 4.17 & 4.27 & 4.39 & 4.54 & 4.7 & 4.89 & 5.1 & 5.32 & 5.57 & 5.83 & 6.11 \\
\hline
\end{tabular}
\caption{Numerical values of $a_k$ and $b_k$.}\label{ak01}
\end{center}
\end{table}

Then we compute $\bar a_l$ and $d_{l,k}$ as defined in \eq{aldlk}. We find that both $\bar a_1\approx0.27$, $\bar a_2\approx0.27$, while the $d_{l,k}$
are as reported in Table \ref{tabdlk}.

\begin{table}[ht]
\begin{center}
\begin{tabular}{|c|c|c|c|c|c|c|c|c|c|c|c|c|c|c|}
\hline
$k$ & 1 & 2 & 3 & 4 & 5 & 6 & 7 & 8 & 9 & 10 & 11 & 12 & 13 & 14 \\
\hline
$d_{1,k}$ & 9.27 & 6.18 & 6.18 & 6.18 & 6.19 & 6.19 & 6.19 & 6.2 & 6.2 & 6.21 & 6.21 & 6.22 & 6.23 & 6.24 \\
\hline
$d_{2,k}$ & 6.18 & 9.27 & 6.18 & 6.18 & 6.19 & 6.19 & 6.19 & 6.2 & 6.2 & 6.21 & 6.21 & 6.22 & 6.23 & 6.24 \\
\hline
\end{tabular}
\caption{Numerical values of $d_{1,k}$ and $d_{2,k}$.}\label{tabdlk}
\end{center}
\end{table}

In fact, $0<d_{1,k}-d_{2,k}\approx10^{-4}$ (for $k=3,...,14$) so
that, in Table \ref{tabdlk}, one does not see any difference
between these coefficients: we put however the ``exact'' numerical
values in the below numerical experiments.\par Concerning the
structural parameters and the value of $\gamma$ we first notice
that the total length of the composed spring depicted in Figure
\ref{cableshangers} equals the height of the towers over the
roadway, which is $72\, m$, see \cite{ammann}. We denote by $h$
the sag of the parabola drawn by the cables: since the shortest
hangers at midspan were of $2\, m$ we have $h=70 \, m$. We denote
by $H$ and $V$ respectively the horizontal and vertical components
of the tension of the couple of cables; in the equilibrium
position under the action of the weight of the cables, of the
hangers and of the deck, the tension is given by the vector
$$
(H,V)=|(H,V)|
\overrightarrow{\tau}=\frac{|(H,V)|}{\sqrt{1+\left[\frac{4h}{L^2}(L-2x)\right]^2}}\left(1,\frac{4h}{L^2}(L-2x)\right)\, .
$$
where $\overrightarrow{\tau}$ denotes the tangent unit vector to the parabola. From this we deduce that
$$
\frac{|(H,V)|}{\sqrt{1+\left[\frac{4h}{L^2}(L-2x)\right]^2}}=H \,
,\qquad
V=\frac{|(H,V)|}{\sqrt{1+\left[\frac{4h}{L^2}(L-2x)\right]^2}}\frac{4h}{L^2}(L-2x)=H\frac{4h}{L^2}(L-2x)
\, .
$$
The horizontal component of the tension $H$ can be supposed constant with respect to $x$ and hence we may write
$$
V(x)=H\frac{4h}{L^2}(L-2x) \qquad \text{for any } x\in (0,L) \, .
$$
Therefore, the action of the cables per unit of length is given by
$$
V'(x)=-\frac{8H}{L^2}h \qquad \text{for any } x\in (0,L) \, .
$$
For more details on the behavior of the cables and how to derive
the above formulas we address the interested readers to
\cite[Chapter 3]{lacarbonara}.\par If we consider a configuration
corresponding to a displacement $w$ from the equilibrium
configuration, the increment of the action of the cables per unit
of length is given by $\frac{8H}{L^2}w$. Taking into account that
the weight of the bridge per unit of length is approximatively
$83\, kN/m$, we have that $H=1.08\cdot 10^8\, N$.

We have seen in Section \ref{nonmod} that the coupled action of cables and hangers is nonlinear. Following \eq{gu} we suggest as
possible force the quantity
$$
\frac{8H}{L^2}(w+w^3)\, .
$$

Next we compute the action of the cables and of the hangers per unit of surface: since the hangers act on
the region $\omega$ defined in \eqref{omega} we obtain
$$
\frac{1500}{2L}\, \frac{8H}{L^2}(w+w^3)=\frac{6000H}{L^3}(w+w^3)\, .
$$
Taking into account that $m=635 \ kg/m^2$ we come to the equation
$$
m\, w_{tt}+\Gamma \Delta^2 w+\frac{6000H}{L^3}\Upsilon(y)(w+w^3)=0
$$
where $\Gamma$ denotes the rigidity of the plate.

In order to provide a reasonable value for the
rigidity compatible with the parameters of the TNB, we start by considering the rigidity of the deck in the beam model. The rigidity of a beam
is given by $EI$ where $E$ is the Young modulus and $I$ is the moment of inertia of the cross section with respect to the
horizontal axis orthogonal to the axis of the beam and containing its barycenter; from \cite{ammann} we know that
$E=2.1\cdot 10^{11} \, Pa$ and $I=0.1528 \, m^4$. Then, the plate equivalent to the beam has a rigidity given by
$\Gamma=\frac{EI}{2\ell(1-\sigma^2)}=2.937 \cdot 10^9 \ Pa \cdot m^3$, see \cite{fergaz} for more details on the comparison between the two models.
From \cite{fergaz} we also recall that the rigidity of a plate is given by $\frac{Ed^3}{12(1-\sigma^2)}$ where $d$ is the thickness of the plate. In our case
we can recover $d$ from the value of $\Gamma$: indeed we have $d=[12(1-\sigma^2)\Gamma/E]^{1/3}=0.544 \ m$. We observe that this is not the thickness of
the roadway slab, which is known to be of $13\ cm$, see \cite[p.13]{ammann}. However, since the roadway slab was reinforced by a stiffening girder with an
H-shaped section, the deck may be considered as a plate of thickness four times as much: $d\approx52\ cm$. Finally, we recall that a rectangular plate has
to be considered as a rectangular parallelepiped of thickness $d$ made of a homogeneous material with constant density and isotropic behavior.\par
We have so determined the constants in the model problem \eqref{tac} and, defining $u$ as in \eqref{gamma}, we come to the adimensional problem
\eqref{wave-3} where
\neweq{truegamma}
\gamma=\frac{\pi^4\Gamma}{6000HL}=5.17\cdot 10^{-4}.
\endeq

Since we aim to show that the value of $\gamma$ plays a secondary role, we quote below the results for three different
$\gamma$.\par Once all these parameters are fixed, we solve \eq{ode1} for $\varphi_k(0)=A>0$ and $\varphi_k'(0)=0$ for
different values of $A$. Each value of $A$ yields the $k$-th longitudinal mode $\overline \varphi_k=\overline \varphi_k^A$ at
energy $E(A,0)>0$, see Definition \ref{oscillmodebaro}. In turn,
we use $\overline \varphi_k^A$ to compute the function
$A_{l,k}(t)$ defined in \eq{Alk} and we replace it into \eq{Hill}.
We start with $A=0$ and we increase it until the trivial solution
$\xi_0\equiv0$ of \eq{Hill} becomes unstable. Since this is a very
delicate point, let us explain with great precision how we obtain
the two sets of critical values of $A$ (thresholds of instability)
that we denote by $A_1(k)$ and $A_2(k)$.\par For a particular
second order Hamiltonian system of the kind of \eq{odedifficile2}, it is shown in \cite{bgz} that there exist two
increasing and divergent sequences $\{A_l^n\}_{n=0}^\infty$
($l=1,2$) such that:\par\noindent $\bullet$ if $A\in
S:=\cup_k(A_l^{2k},A_l^{2k+1})$ then $\xi_0$ is
stable;\par\noindent $\bullet$ if $A\in
U:=\cup_k(A_l^{2k+1},A_l^{2k+2})$ then $\xi_0$ is
unstable.\par\noindent Moreover, the instability becomes more
evident if $A$ is far from $S$: in particular, if
$A\in(A_l^{2k+1},A_l^{2k+2})$ for some $k\ge0$ and
$A_l^{2k+2}-A_l^{2k+1}>0$ is small, then it is hard to detect the
instability of $\xi_0$.\par Due to the already mentioned
unpredictable behavior of the stability regions for general Hill
equations (see \cite{broer,broer2}), the same results appear
difficult to reach for the particular Hamiltonian system \eq{odedifficile2}. It is however reasonable to expect that somehow
similar results hold. In particular, we expect $\xi_0$ to be
``weakly unstable'' whenever $A$ belongs to a narrow interval of
instability. By this we mean that nontrivial solutions of
\eq{Hill} blow up slowly in time. In other words, if $A$ belongs
to a narrow instability interval, then only a small amount of
energy is transferred from the longitudinal mode $\overline
\varphi_k^A$ to a torsional mode. From a physical point of view,
this kind of instability is irrelevant, both because it has low
probability to occur and because, even if it occurs, the torsional
mode remains fairly small. In turn, from a mechanical point of
view, we know from Scanlan-Tomko \cite{scantom} that small
torsional oscillations are harmless and the bridge would remain
safe. For this reason, we compute the two sets of critical values
$A_l(k)$ ($l=1,2$) as the infimum of the first interval of
instability having at least amplitude 0.2. These critical values
measure the least height of the longitudinal mode
$\overline\varphi_k$ which gives rise to a ``strong'' instability.\par
Let us explain what we saw numerically. For $\gamma=10^{-4}$ we discuss here the
stability of the $14th$ longitudinal mode $\overline
\varphi_{14}^A$ for $A\in\{0.79;0.8;0.81\}$ with respect to the
first torsional mode. In the pictures of Figure \ref{tre} we
represent the corresponding solution of \eq{Hill} with
$\xi(0)=\xi'(0)=1$.

\begin{figure}[ht]
\begin{center}
{\includegraphics[height=32mm, width=55mm]{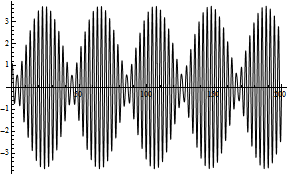}}\ {\includegraphics[height=32mm, width=55mm]{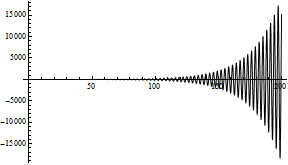}}\
{\includegraphics[height=32mm, width=55mm]{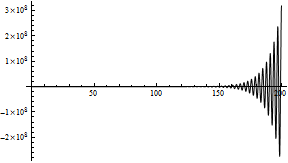}}
\caption{Solutions of \eq{Hill} with $\xi(0)=\xi'(0)=1$, $k=14$, $l=1$, $A\in\{0.79;0.8;0.81\}$.}\label{tre}
\end{center}
\end{figure}

When $A<0.75$ the behavior of $\xi$ appears periodic with, basically, oscillations of constant amplitude. When $A=0.79$ (left picture) the function $\xi$
still appears periodic but with oscillations of variable amplitude; this behavior always turned out to be the foreplay of instability. And, indeed, when
$A=0.8$ we see (middle picture) that $\xi(t)$ oscillates with increasing amplitude and reaches a magnitude of the order of $10^4$ for $t=200$. The
same phenomenon is accentuated for $A=0.81$ (right picture) where $\xi(200)$ has an order of magnitude of $10^8$. By increasing further $A$ the magnitude
was also increasing. This is how we found $A_1(14)=0.8$.\par
{From} the values of $A_1(k)$ and $A_2(k)$ one can compute the corresponding energies $E_1$ and $E_2$ as given by \eq{energyEk}:
\neweq{Elk}
E_l(k)=(\gamma \mu_{k,1}+a_k)\frac{A_l(k)^2}{2}+b_k\frac{A_l(k)^4}{4}\qquad(l=1,2)\,.
\endeq
Then we can compute the flutter energy of the $k$-th longitudinal mode $\overline \varphi_k$ (see Definition \ref{flutter}) as
$$E_k=\min\{E_1(k),E_2(k)\}\, .$$

In Table \ref{threegamma} we quote our numerical results for some values of $\gamma$. In the middle table we use \eq{truegamma}, that is, the value of $\gamma$
obtained from the parameters of the TNB. We found completely similar behaviors also for $\gamma=10^{-2}$ and $\gamma=5\cdot10^{-5}$.

\begin{table}[ht]
\begin{center}
{\small
\begin{tabular}{|c|c|c|c|c|c|c|c|c|c|c|c|c|c|c|}
\hline
$k$ & 1 & 2 & 3 & 4 & 5 & 6 & 7 & 8 & 9 & 10 & 11 & 12 & 13 & 14 \\
\hline
$A_1(k)$ & 4.62 & 2.96 & 2.93 & 2.85 & 2.67 & 2.31 & 1.42 & $>$20 & $>$20 & $>$20 & 0.89 & 1.51 & 2.02 & 2.51 \\
\hline
$A_2(k)$ & 5.93 & 9.23 & 5.91 & 5.87 & 5.79 & 5.63 & 5.36 & 4.92 & 4.19 & 2.62 & $>$20 & $>$20 & $>$20 & $>$20 \\
\hline
\end{tabular}}
\end{center}

\begin{center}
{\small
\begin{tabular}{|c|c|c|c|c|c|c|c|c|c|c|c|c|c|c|}
\hline
$k$ & 1 & 2 & 3 & 4 & 5 & 6 & 7 & 8 & 9 & 10 & 11 & 12 & 13 & 14 \\
\hline
$A_1(k)$ &  3.32 & 2.12 & 2.10 & 2.04 & 1.92 & 1.65 &  1.01  & $>20$ & $>20$ & $>20$ &  0.62  &  1.08  & 1.46   & 1.82 \\
\hline
$A_2(k)$ &  4.26 & 2.46 & 4.25 & 4.22 & 4.15 & 4.05 &  3.86  &  3.54  &  3.01 &  1.87 & $>20$ & $>20$ & $>20$ & $>20$ \\
\hline
\end{tabular}}
\end{center}

\begin{center}
{\small
\begin{tabular}{|c|c|c|c|c|c|c|c|c|c|c|c|c|c|c|}
\hline
$k$ & 1 & 2 & 3 & 4 & 5 & 6 & 7 & 8 & 9 & 10 & 11 & 12 & 13 & 14 \\
\hline
$A_1(k)$ & 1.44 & 0.91 & 0.9 & 0.88 & 0.82 & 0.69 & $>$10 & $>$10 & $>$10 & $>$10 & 0.2 & 0.44 & 0.63 & 0.8 \\
\hline
$A_2(k)$ & 1.87 & 2.91 & 1.86 & 1.85 & 1.82 & 1.77 & 1.69 & 1.54 & 1.3 & 0.76 & $>$10 & $>$10 & $>$10 & $>$10 \\
\hline
\end{tabular}}
\caption{Numerical values of $A_1(k)$ and $A_2(k)$ when $\gamma=10^{-3}$ (top), $\gamma=5.17\cdot10^{-4}$ (middle), $\gamma=10^{-4}$ (bottom).}\label{threegamma}
\end{center}
\end{table}

\begin{remark}\label{primarem}
{\rm From Table \ref{threegamma} we deduce that, if $\gamma=10^{-3}$ or $\gamma=5.17\cdot10^{-4}$ then $A_1(k)>A_2(k)$ provided that $k=8,9,10$.
If $\gamma=10^{-4}$ this happens provided that $k=7,8,9,10$. In these cases, the energy transfer occurs on the second torsional mode.
On the contrary, for lower $k$, we have that $A_1(k)<A_2(k)$.}
\end{remark}

These results deserve several comments. First of all, we notice
that the values of $A_1(1)$ and $A_2(2)$ are somehow out of the
pattern because they are strongly influenced by the definition of
the coefficients $d_{l,k}$ in \eq{aldlk} which is fairly different
if $l=k$ and $l\neq k$. If we drop the case $k=l$, from Table
\ref{threegamma} we see that the maps $k\mapsto A_l(k)$ are
strictly decreasing until some $k$ where $A_l(k)$ becomes very
large. In particular, when $l=2$ the least amplitude of
oscillation $A_2(k)$ (threshold of instability) is obtained for
$k=10$. This behavior is obtained for different values of
$\gamma$. As pointed out in the Introduction, on the day of the
TNB collapse the motions {\em were considerably less than had
occurred many times before}. Table \ref{threegamma} gives an
answer to question {\bf (Q3)}: the tenth longitudinal mode seems
more prone to generate the second torsional mode.

\section{Validation of the results}\label{full}

\subsection{Validation of the linearization procedure}

Fix some $k\in\{1,...,14\}$ and some $l\in\{1,2\}$. The energies
$E_l(k)$ in \eq{Elk} are computed in Section \ref{numres} with the
following algorithm. Firstly we solve \eq{ode1} and find the
$k$-th longitudinal mode $\overline{\varphi}_k$ at energy
$E(\phi_0^k,\phi_1^k)$. Then we use $\overline{\varphi}_k$ to
determine the function $A_{l,k}(t)$ in \eq{Alk}. Finally, we study
the stability of the trivial solution of \eq{Hill}: if it is
stable then $E(\phi_0^k,\phi_1^k)<E_l(k)$ while if it is unstable,
then $E(\phi_0^k,\phi_1^k)>E_l(k)$. With different choices of the
initial data $(\phi_0^k,\phi_1^k)$ we are so able to find both
upper and lower bounds which can be as close as wanted.\par
Overall, this algorithm means that we are solving the following
system
\begin{equation} \label{ode11}
\begin{cases}
\varphi''_k(t)+(\gamma \mu_{k,1}+a_k) \varphi_k(t)+b_k \varphi_k^3(t)=0 \qquad\forall t>0 \\
\xi_l''(t)+(\gamma \nu_{l,2}+\bar a_{l}+ d_{l,k}\varphi_k^2(t))\xi_l(t)=0\qquad\forall t>0 \\
\varphi_k(0)=\phi_0^k\, ,\quad \varphi_k'(0)=\phi_1^k\\
\xi_l(0)=\eps_0\, ,\quad \xi_l'(0)=\eps_1
\end{cases}
\end{equation}
where $|\eps_1|+|\eps_2|\ll|\phi_0^k|+|\phi_1^k|$. The system \eq{ode11} is obtained by putting to 0 all the $\varphi_j$ with $j\neq k$ and $\tau_j$
with $j\neq l$ in \eq{odedifficile2} and by linearizing the so found $2\times2$ system of ODE's in a neighborhood of the solution
$(\overline{\varphi}_k,0)$.\par
One may wonder if this method to determine $E_l(k)$ does not suffer from this linearization and give incorrect results.
We have so tried a nonlinear approach and considered systems such as
\begin{equation} \label{ode12}
\begin{cases}
\varphi''_k(t)+(\gamma \mu_{k,1}+a_k) \varphi_k(t)+b_k \varphi_k^3(t)+\alpha_1\xi_l^2(t)\varphi_k(t)+\alpha_2\xi_l(t)\varphi_k^2(t)=0 \qquad\forall t>0 \\
\xi_l''(t)+(\gamma \nu_{l,2}+\bar a_{l}+ d_{l,k}\varphi_k^2(t))\xi_l(t)+\beta_1\varphi_k(t)\xi_l^2(t)+\beta_2\xi_l^3(t)=0\qquad\forall t>0 \\
\varphi_k(0)=\phi_0^k\, ,\quad \varphi_k'(0)=\phi_1^k\\
\xi_l(0)=\eps_0\, ,\quad \xi_l'(0)=\eps_1\, ;
\end{cases}
\end{equation}
note that the equations in \eq{ode12} differ from the ones in
\eq{ode11} by third order homogeneous polynomials with respect to
$\varphi_k$ and $\xi_l$. We numerically solved \eq{ode12} for
different choices of the constants $\alpha_i$ and $\beta_i$ but,
regardless of the choice, we always found that
\begin{center}
{\bf the solution $\xi_l$ of \eq{ode11} remains small for all $t>0$ if and only if the solution $\xi_l$ of \eq{ode12} remains small for all $t>0$.}
\end{center}
This behavior had to be expected in view of the theoretical result
in \cite{ortega}.

This means that the computation of $E_l(k)$ (and of the flutter energy of the $k$-th longitudinal mode) does not depend on the linearization
procedure. Moreover, this shows that Definition \ref{newdeff} well characterizes the stability of the $k$-th longitudinal mode with respect to
the $l$-th torsional mode.

\subsection{Different nonlinearities: a system of Hill equations}

For any longitudinal mode $k$, the just described procedure with the specific nonlinearity chosen, see \eq{husata}, yields the system \eq{Hill}
of {\em uncoupled} Hill equations ($l=1,2$). This enables us to study separately the stability of each of the two equations in \eq{Hill} and to compute
the thresholds $A_l(k)$ as described above.\par
For different nonlinearities, other than \eq{husata}, it may happen that the corresponding equations in \eq{Hill} ($l=1,2$) remain coupled and give
rise to a Hill {\em system} of the form
\neweq{Hillsystem}
\Xi''(t)+A_{l,k}(t) \Xi(t)=0
\endeq
where $\Xi=(\xi_1,\xi_2)$ and $A_{l,k}$ is now a $2\times2$ periodic matrix. In this case, one should investigate the stability of the trivial solution
$\Xi\equiv(0,0)$ of \eq{Hillsystem}. By \cite[Theorem II p.270, vol.1]{yakubovich} we know that this trivial solution is stable provided that the
trivial solutions of a family of related Hill equations are stable. Therefore, even for different nonlinearities or for a larger number of torsional
modes, the stability of the $k$-th longitudinal mode with respect to the torsional components may be performed through a finite number of stability
analysis of some Hill {\em equations}.

\subsection{The coupled nonlinear system}\label{truncsys}

In this section we discuss the nonlinear system \eqref{odedifficile2} with $\gamma=5.17\cdot 10^{-4}$ as for the TNB, see \eq{truegamma}. We provide an
alternative notion of stability for a solution of \eqref{odedifficile2} having the torsional components $\tau_1$ and $\tau_2$ identically equal to zero.
Then we discuss the stability of the longitudinal components. Our purpose is to explain to which extent the decoupling procedure applied in Section
\ref{s:4.2} provides an accurate description of the phenomena.\par
For our convenience we slightly modify some notations.
We denote by $\widetilde w_1,\dots,\widetilde w_{14}$ the first fourteen eigenfunctions (see Section \ref{s:4.2}) which are even with respect to $y$, i.e.\
$\widetilde w_k=w_k$ for any index $k\in \{1,\dots,10\}$ and $\widetilde w_{11}=w_{12}$, $\widetilde w_{12}=w_{13}$,
$\widetilde w_{13}=w_{14}$, $\widetilde w_{14}=w_{15}$. Then we denote by $\widetilde w_{15}=w_{10}$ and $\widetilde
w_{16}=w_{16}$ as the first two eigenfunctions which are odd with respect to $y$. All the eigenfunctions are normalized in $L^2(\Omega)$.
Similarly we relabel the functions $\varphi_1,\dots,\varphi_{14}$ and $\tau_1,\tau_2$ as
$\widetilde \varphi_k$ for $k\in \{1,...,16\}$ and the corresponding eigenvalues $\widetilde \mu_k$ for any $k\in\{1,\dots,16\}$.
Then, for $\Upsilon$ as in \eq{husata}, we introduce the constants
\begin{equation*}
A_k=\int_\Omega \Upsilon(y)\widetilde w_k^2 \, dxdy \quad
\text{for any } k\in \{1,\dots,16\}
\end{equation*}
and for any $j_1,j_2,j_3\in \{1,\dots,16\}$, $j_1\ge j_2\ge j_3$ and $k\in \{1,\dots,16\}$
\begin{equation} \label{eq:B}
B_{j_1j_2j_3k}=
\begin{cases}
\int_\Omega \Upsilon(y)\widetilde w_{j_1}\widetilde
w_{j_2}\widetilde w_{j_3}\widetilde w_{k} \, dxdy & \quad \text{if } j_1=j_2=j_3 \\[8pt]
3\int_\Omega \Upsilon(y)\widetilde w_{j_1}\widetilde
w_{j_2}\widetilde w_{j_3}\widetilde w_{k} \, dxdy & \quad \text{if } j_1>j_2=j_3 \ \text{or} \ j_1=j_2>j_3 \\[8pt]
6\int_\Omega \Upsilon(y)\widetilde w_{j_1}\widetilde
w_{j_2}\widetilde w_{j_3}\widetilde w_{k} \, dxdy & \quad \text{if } j_1>j_2>j_3 \, .
\end{cases}
\end{equation}
With these notations we may write \eqref{odedifficile2} and the corresponding initial conditions in the form
\begin{equation} \label{eq:sistema-completo}
\begin{cases}
\ds{\widetilde\varphi_k''(t)+(\gamma\widetilde\mu_k+A_k)\widetilde\varphi_k(t)+\sum_{\underset{j_1\ge j_2\ge
j_3}{j_1,j_2,j_3=1}}^{16} B_{j_1 j_2 j_3 k}\, \widetilde
\varphi_{j_1}(t) \widetilde \varphi_{j_2}(t) \widetilde
\varphi_{j_3}(t)=0\, , } & \qquad  k\in \{1,\dots,16\} \, , \\[25pt]
\widetilde \varphi_k(0)=\widetilde \phi_0^k \, , \quad \widetilde \varphi_k'(0)=\widetilde \phi_1^k \, , & \qquad  k\in \{1,\dots,16\} \, .
\end{cases}
\end{equation}
We observe that if $k=15,16$ then $B_{j_1 j_2 j_3 k}=0$ for any $j_1,j_2,j_3\in \{1,\dots,14\}$ such that $j_1\ge j_2\ge j_3$; therefore if we choose
$\widetilde \phi_0^{15}=\widetilde \phi_1^{15}=\widetilde \phi_0^{16}=\widetilde \phi_1^{16}=0$, then the solution of \eqref{eq:sistema-completo}
satisfies $\widetilde \varphi_{15}=\widetilde \varphi_{16}\equiv 0$. This means that no torsional oscillation may appear if such a kind of oscillation
equals zero at $t=0$. We summarize this fact in the following

\begin{proposition} \label{p:uncoupled} The unique solution $(\widetilde \varphi_1,...,\widetilde \varphi_{16})$ of \eqref{eq:sistema-completo} with
$\widetilde \phi_0^{15}=\widetilde \phi_1^{15}=\widetilde \phi_0^{16}=\widetilde \phi_1^{16}=0$ satisfies $\widetilde \varphi_{15}(t)=\widetilde \varphi_{16}(t)=0$
for any $t\in \R$.
\end{proposition}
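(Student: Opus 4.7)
The plan is to reduce the claim to two facts: a parity-based vanishing of certain coefficients $B_{j_1j_2j_3k}$, and uniqueness for the Cauchy problem associated with \eqref{eq:sistema-completo}.

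First I would check the following symmetry statement: if $k\in\{15,16\}$ and $j_1,j_2,j_3\in\{1,\dots,14\}$ with $j_1\ge j_2\ge j_3$, then $B_{j_1j_2j_3k}=0$. By the relabelling introduced just before \eqref{eq:B}, the eigenfunctions $\widetilde w_1,\dots,\widetilde w_{14}$ are even in $y$ (they are longitudinal, of the form $v_j(y)\sin(jx)$), while $\widetilde w_{15}$ and $\widetilde w_{16}$ are odd in $y$ (they are torsional, of the form $\theta_j(y)\sin(jx)$). Moreover, $\Upsilon$ is the characteristic function of a set symmetric with respect to $y=0$, hence $\Upsilon$ is even in $y$. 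Consequently the integrand $\Upsilon(y)\widetilde w_{j_1}\widetilde w_{j_2}\widetilde w_{j_3}\widetilde w_k$ appearing in \eqref{eq:B} is an odd function of $y$ on the symmetric domain $\Omega=(0,\pi)\times(-\pi/150,\pi/150)$, so its integral vanishes.

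Next I would solve the reduced $14$-dimensional system obtained from \eqref{eq:sistema-completo} by restricting to $k\in\{1,\dots,14\}$ with the prescribed initial data $\widetilde\varphi_k(0)=\widetilde\phi_0^k$, $\widetilde\varphi_k'(0)=\widetilde\phi_1^k$. Since the nonlinearity is polynomial, hence locally Lipschitz, the Cauchy problem admits a unique local solution $(\widetilde\varphi_1,\dots,\widetilde\varphi_{14})$; the conservation of the associated mechanical energy (the restriction of the full energy \eqref{energy} to the even sector) prevents blow-up in finite time, so the solution is global on $\R$. Define then $\widetilde\varphi_{15}\equiv 0$ and $\widetilde\varphi_{16}\equiv 0$. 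For $k\in\{1,\dots,14\}$, the equation in \eqref{eq:sistema-completo} is satisfied by construction because the cubic sum is unaffected by inserting zero values for $\widetilde\varphi_{15}$ and $\widetilde\varphi_{16}$ (any term involving at least one factor $\widetilde\varphi_{15}$ or $\widetilde\varphi_{16}$ vanishes). For $k\in\{15,16\}$ the left-hand side reduces to the cubic sum evaluated only over indices $j_1,j_2,j_3\in\{1,\dots,14\}$, and by the symmetry computation just performed every such coefficient $B_{j_1j_2j_3k}$ is zero; together with $\widetilde\varphi_k(0)=\widetilde\varphi_k'(0)=0$, this shows that the $k$-th equation is trivially satisfied.

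Finally, the full $16$-dimensional system \eqref{eq:sistema-completo} again has polynomial, hence locally Lipschitz, right-hand side, so its Cauchy problem has at most one global solution. The candidate just constructed is therefore \emph{the} solution, and the conclusion $\widetilde\varphi_{15}\equiv\widetilde\varphi_{16}\equiv 0$ follows. There is no substantive obstacle in this proof: the only point requiring care is the bookkeeping of parities in $y$ that makes the coupling coefficients $B_{j_1j_2j_3k}$ vanish for $k\in\{15,16\}$ when the other three indices belong to the even block.
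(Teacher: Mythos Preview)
Your proof is correct and follows exactly the approach indicated in the paper: the paper observes, in the paragraph immediately preceding the proposition, that $B_{j_1j_2j_3k}=0$ whenever $k\in\{15,16\}$ and $j_1,j_2,j_3\in\{1,\dots,14\}$ (which is precisely your parity-in-$y$ computation), and then states the proposition as a direct consequence via uniqueness. You have simply spelled out the details that the paper leaves implicit.
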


The situation is completely different if we do not assume that $\widetilde \phi_0^{15}=\widetilde \phi_1^{15}=\widetilde \phi_0^{16}=\widetilde \phi_1^{16}=0$:
in this case the first fourteen equations are coupled with the last two since the $B_{kj_2 j_3 k}$ may be different from zero if $k\in\{15,16\}$ and $j_2,j_3\le14$.
This suggests to make precise how an oscillation mode may influence the others.

\begin{definition} \label{d:influence} Let us consider system \eqref{eq:sistema-completo}.
\begin{itemize}
\item[(i)] We say that the $k$-th mode influences the $j$-th mode ($j\neq k$) if $B_{kkkj}\neq 0$.
\item[(ii)] We say that the $k_1$-th and $k_2$-th modes ($k_1>k_2$) influence the $j$-th mode ($j\not\in \{k_1,k_2\}$) if at least one between
$B_{k_1k_1k_2j}$ or $B_{k_1k_2k_2j}$ is different from zero.
\item[(iii)] We say that the $k_1$-th, $k_2$-th, $k_3$-th modes ($k_1>k_2>k_3$) influence the $j$-th mode ($j\not\in \{k_1,k_2,k_3\}$) if $B_{k_1k_2k_3j}\neq 0$.
\end{itemize}
\end{definition}

If some modes influence the $j$-th mode it may happen that, in
system \eqref{eq:sistema-completo}, $\widetilde\varphi_j\not\equiv
0$ even if $(\widetilde\phi_0^j,\widetilde\phi_1^j)=(0,0)$. We
state a result which explains how longitudinal modes influence
each other.

\begin{proposition} \label{p:influence}
The following statements hold true:
\begin{itemize}
\item[(i)] Let $k,j \in \{1,\dots,14\}$ with $j\neq k$. If $j=3k$ then the $k$-th mode influences the $j$-th mode;
\item[(ii)] Let $k_1,k_2,j \in \{1,\dots,14\}$ with $k_1>k_2$ and $j\not\in\{k_1,k_2\}$.
If $j\in \{2k_1+k_2,2k_1-k_2,k_1+2k_2,|k_1-2k_2|\}$ then the $k_1$-th and $k_2$-th modes influence the $j$-th mode.
\item[(iii)] Let $k_1,k_2,k_3,j\in \{1,\dots,14\}$ with $k_1>k_2>k_3$ and $j\not\in \{k_1,k_2,k_3\}$.
If $j \in \{k_1+k_2+k_3,k_1+k_2-k_3,k_1-k_2+k_3,|k_1-k_2-k_3|\}$ then the $k_1$-th, $k_2$-th and $k_3$-th modes influence the $j$-th mode.
\end{itemize}
\end{proposition}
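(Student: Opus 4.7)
The plan is to reduce $B_{j_1 j_2 j_3 k}\neq 0$ to an elementary algebraic condition on sign combinations of the four indices. After the relabeling of Section \ref{truncsys}, for every $k\in\{1,\dots,14\}$ the eigenfunction $\widetilde w_k$ has the separated form $g_k(y)\sin(kx)$ with $g_k$ a positive multiple of $v_k$. The first step would be to verify that $v_k>0$ on $[-\pi/150,\pi/150]$: Proposition \ref{eigenvalue}(i) yields $\mu_{k,1}^{1/2}\in((1-\sigma)k^2,k^2)$, i.e.\ $\mu_{k,1}^{1/2}\in(\tfrac{4}{5}k^2,k^2)$ under the choice $\sigma=0.2$, which makes both bracketed coefficients in the formula for $v_k$ recalled in Section \ref{s:4.2} strictly positive. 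Hence $v_k$ is a positive combination of positive $\cosh$ functions.

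The integral in \eqref{eq:B} then factorises: up to a positive combinatorial constant,
\[
B_{j_1 j_2 j_3 k}=\bigg(\int_0^\pi\!\!\sin(j_1 x)\sin(j_2 x)\sin(j_3 x)\sin(kx)\,dx\bigg)\bigg(\int_{-\pi/150}^{\pi/150}\!\!\Upsilon(y)\,g_{j_1}(y)g_{j_2}(y)g_{j_3}(y)g_k(y)\,dy\bigg).
\]
The $y$-integral is strictly positive by the first step and the non-negativity of $\Upsilon$, so $B_{j_1 j_2 j_3 k}\neq 0$ if and only if the $x$-integral is non-zero.

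To analyse the $x$-integral I would apply twice the identity $2\sin a\sin b=\cos(a-b)-\cos(a+b)$, which expresses the integrand as $\tfrac{1}{8}$ times a signed sum of the eight cosines $\cos\big((\varepsilon_1 j_1+\varepsilon_2 j_2+\varepsilon_3 j_3+\varepsilon_4 k)x\big)$ with $\varepsilon_i\in\{\pm 1\}$. Since $\int_0^\pi\cos(nx)\,dx$ vanishes for every non-zero integer $n$ and equals $\pi$ for $n=0$, the $x$-integral is non-zero precisely when $k\in\{|\pm j_1\pm j_2\pm j_3|\}\setminus\{0\}$.

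The three statements then reduce to a finite check. For (i), $|\pm k\pm k\pm k|\in\{k,3k\}$, and $j\neq k$ leaves $j=3k$. For (ii), $B_{k_1 k_1 k_2 j}\neq 0$ iff $j\in\{k_2,|2k_1\pm k_2|\}$ and $B_{k_1 k_2 k_2 j}\neq 0$ iff $j\in\{k_1,|k_1\pm 2k_2|\}$; removing the excluded values $k_1,k_2$ and taking the union yields the four-element list in the statement. For (iii), with three distinct indices $k_1>k_2>k_3$ the set $\{|\pm k_1\pm k_2\pm k_3|\}$ is exactly $\{k_1+k_2+k_3,\,k_1+k_2-k_3,\,k_1-k_2+k_3,\,|k_1-k_2-k_3|\}$. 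The only mildly non-obvious point is the positivity of $v_k$, and even that is a one-line check from Proposition \ref{eigenvalue}(i); I do not foresee any real obstacle in the argument.
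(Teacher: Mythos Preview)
Your argument is correct and matches the paper's proof: factorise $B_{j_1j_2j_3k}$ into an $x$-integral and a $y$-integral, observe that the $y$-factor is strictly positive because each $v_k>0$ on $(-\pi/150,\pi/150)$, and reduce the $x$-factor to a product-to-sum (Werner) computation on $\sin(j_1x)\sin(j_2x)\sin(j_3x)\sin(jx)$. The paper states the positivity of $v_k$ without justification, so your one-line derivation from Proposition~\ref{eigenvalue}(i) is a welcome addition; one small caveat is that your ``precisely when'' claim for the $x$-integral is stronger than what the proposition requires (only the ``if'' direction is needed), but in the cases at hand no cancellation among the zero-frequency cosine terms occurs, so this does not affect the argument.
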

\begin{proof}
In order to treat (i)-(ii) together we assume that $k_1\ge k_2$
where also equality is admissible. Then, under the assumptions of
the proposition we infer that either $B_{k_1k_1k_2j}\neq 0$ or
$B_{k_1k_2k_2j}\neq 0$ since either $\int_0^\pi
\sin(k_1x)\sin(k_1x)\sin(k_2x)\sin(jx)\,dx\neq 0$ or $\int_0^\pi
\sin(k_1x)\sin(k_2x)\sin(k_2x)\sin(jx)\,dx\neq 0$ thanks to Werner
formulas; moreover $v_k(y)>0$ for any $y\in \left(-\frac
\pi{150},\frac \pi{150}\right)$ and $k\in \{1,\dots,14\}$, with
$v_k$ as in Section \ref{s:4.2}. The case (iii) can be treated in
a similar way.
\end{proof}

\begin{remark}
{\rm Proposition \ref{p:influence} strongly depends on the nonlinearity $h$ introduced in \eqref{husata}. That the $k$-th mode influences the $j$-th
mode when $j=3k$ (statement (i)) depends on the fact that the nonlinearity $h(y,u)$ contains a cubic term. If the cubic term is replaced with a different
term then the influences may vary. A similar discussion is valid for (ii)-(iii) in Proposition \ref{p:influence}.}
\end{remark}

To illustrate Proposition \ref{p:influence}, we consider the case
where $\widetilde \phi_0^1=1$, $\widetilde \phi_1^1=0$ and
$\widetilde \phi_0^k=\widetilde \phi_1^k=0$ for all $k\in
\{2,\dots,16\}$. According to Proposition \ref{p:influence}, the
first longitudinal mode influences only the third longitudinal
mode but this does not mean that only $\widetilde\varphi_1$ and
$\widetilde\varphi_3$ are nontrivial in
\eqref{eq:sistema-completo}. As one can see from Figure
\ref{f:influence-1} below, where a numerical solution of
\eqref{eq:sistema-completo} is obtained, also the functions
$\widetilde\varphi_5,\widetilde\varphi_7,\widetilde\varphi_9,\widetilde\varphi_{11},\widetilde\varphi_{13}$
are not identically equal to zero while all the other longitudinal
modes and the two torsional modes are identically equal to zero.
\begin{figure}[ht]
\begin{center}
{\includegraphics[height=90mm, width=170mm]{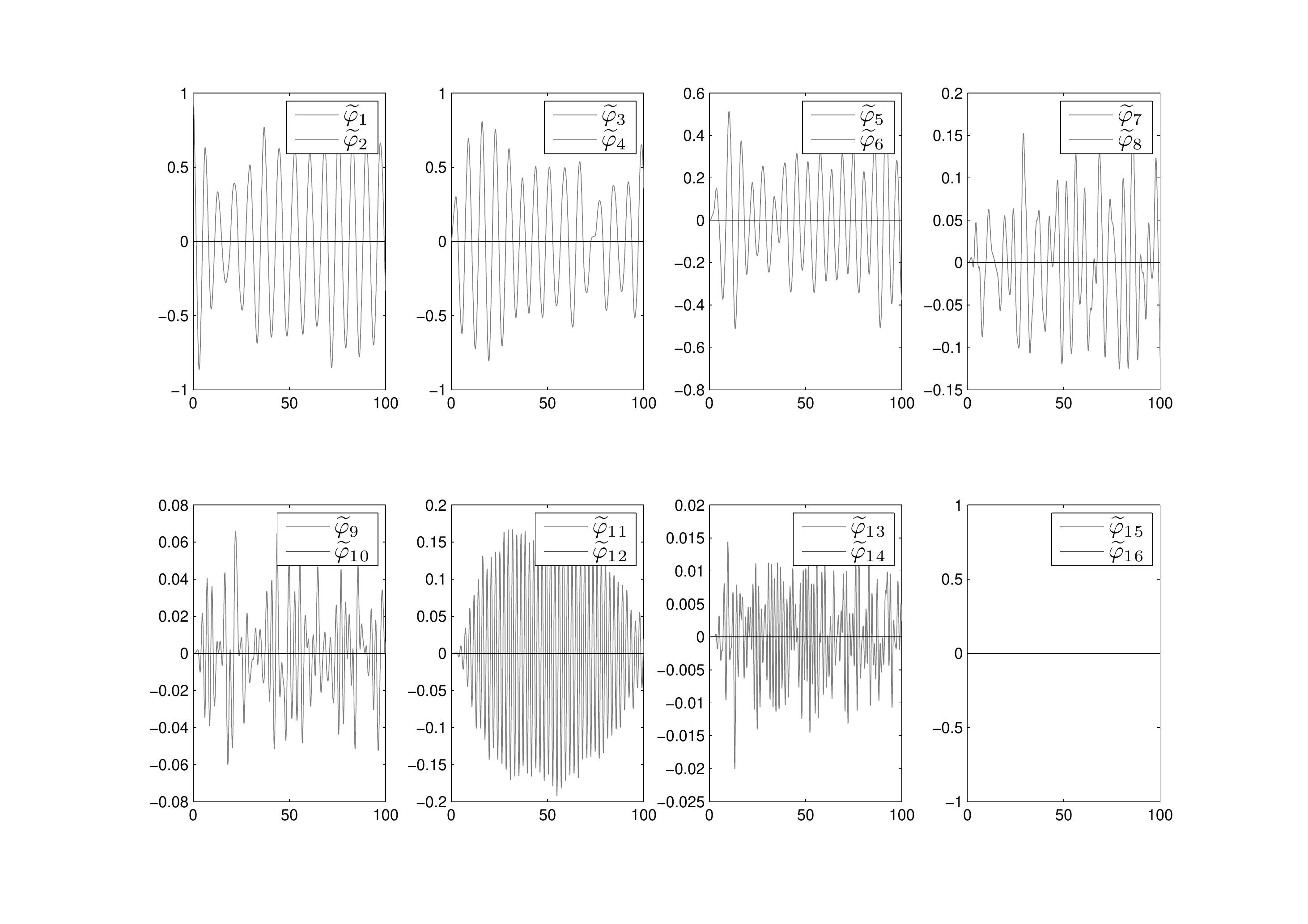}}
\caption{The solution of \eqref{eq:sistema-completo} with $\widetilde\phi_0^1=1$, $\widetilde\phi_1^1=0$, $(\widetilde\phi_0^k,\widetilde\phi_1^k)=(0,0)$ for any $k\in\{2,\dots,16\}$.}\label{f:influence-1}
\end{center}
\end{figure}

Let us briefly discuss these results.
From Proposition \ref{p:influence} (i) with $k=1$ and $j=3$, we deduce that $\widetilde\varphi_1$ influences $\widetilde\varphi_3$; by Proposition \ref{p:influence} (ii) with $k_1=3$ and $k_2=1$ we have that $\widetilde\varphi_1$ and $\widetilde\varphi_3$ influence $\widetilde\varphi_5$. Then
$\widetilde\varphi_1$ and $\widetilde\varphi_5$ influence $\widetilde\varphi_7$. In the next stages $\widetilde\varphi_1$ and $\widetilde\varphi_7$ influence $\widetilde\varphi_9$, while $\widetilde\varphi_1$ and $\widetilde\varphi_9$ influence $\widetilde\varphi_{11}$; finally $\widetilde\varphi_1$ and $\widetilde\varphi_{11}$ influence $\widetilde\varphi_{13}$.\par
As a second example we consider $\widetilde \phi_0^2=1$, $\widetilde \phi_1^2=0$ and $(\widetilde \phi_0^k,\widetilde \phi_1^k)=(0,0)$ for any $k\in \{1,\dots,16\}$
with $k\neq 2$. A similar phenomenon occurs as one can see from Figure \ref{f:influence-2} below.
\begin{figure}[ht]
\begin{center}
{\includegraphics[height=90mm, width=170mm]{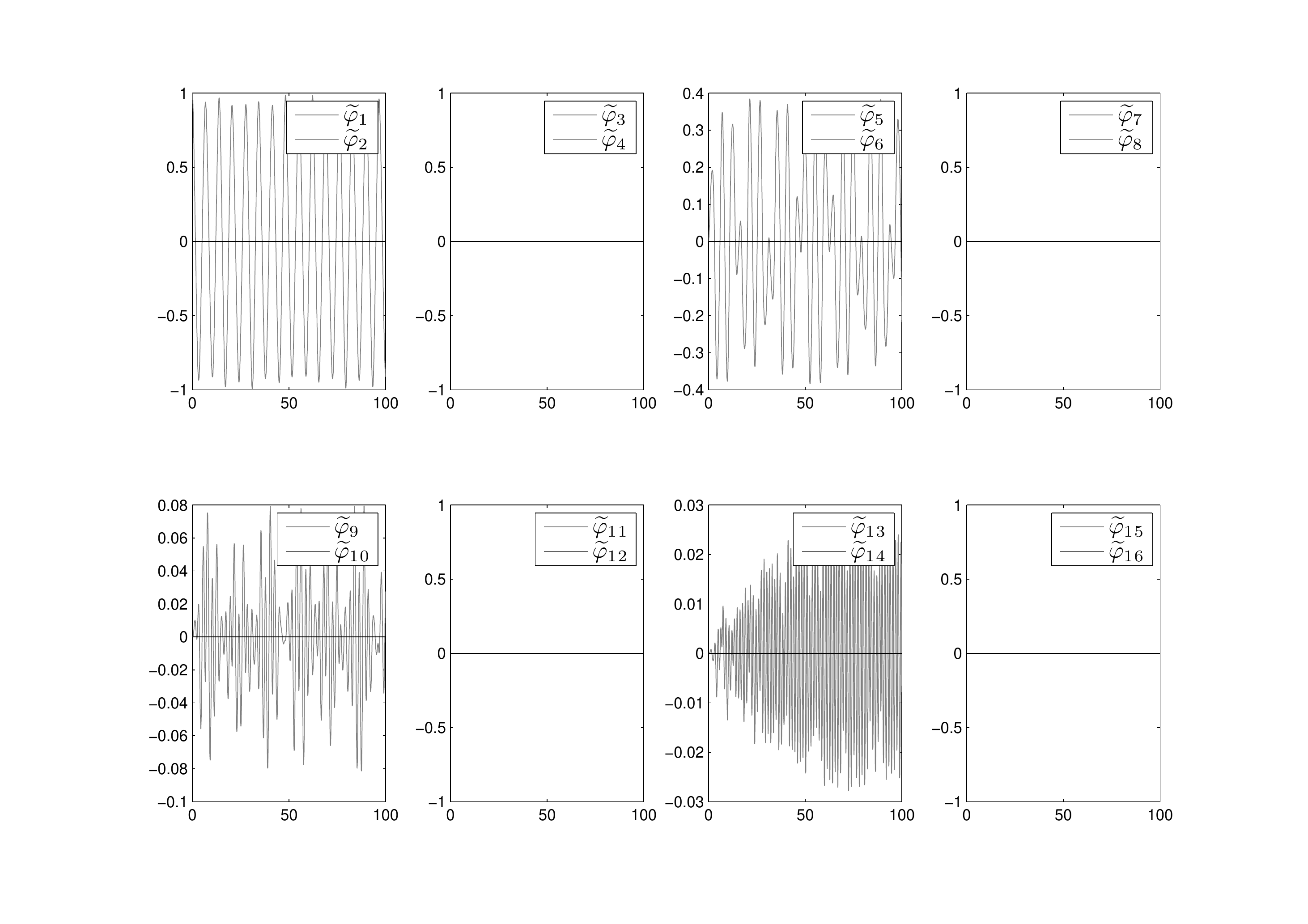}}
\caption{The solution of \eqref{eq:sistema-completo} with $\widetilde\phi_0^2=1$, $\widetilde\phi_1^2=0$, $(\widetilde\phi_0^k,\widetilde\phi_1^k)=(0,0)$ for any $k\in\{1,\dots,16\}$, $k\neq 2$.}\label{f:influence-2}
\end{center}
\end{figure}
One can observe that only the functions $\widetilde\varphi_2$,
$\widetilde\varphi_6$, $\widetilde\varphi_{10}$,
$\widetilde\varphi_{14}$ are not identically equal to zero.\par A
further consequence of our approach deserves to be emphasized. We
have truncated an infinite dimensional system up the least 16
modes: although this truncation is legitimate for energy reasons
(see \cite{bergaz}), it may lead to some small glitches. Since we
are considering a truncated system with fourteen longitudinal
modes and two torsional modes, it happens that if $k\in
\{5,\dots,14\}$ then the mode $\widetilde\varphi_k$ influences no
other modes, hence the solution of \eqref{eq:sistema-completo}
with $(\widetilde\phi_0^k,\widetilde\phi_1^k)\neq (0,0)$ and
$(\widetilde\phi_0^j,\widetilde\phi_1^j)=(0,0)$ for any $j\in
\{1,\dots,16\}$, $j\neq k$, satisfies
$\widetilde\varphi_k\not\equiv 0$ and $\widetilde\varphi_j\equiv
0$ for any $j\in \{1,\dots,16\}$, $j\neq k$. This is another
consequence of Proposition \ref{p:influence} (i) which can be
summarized in the following

\begin{proposition}\label{c:1} Let $k\in \{5,\dots, 14\}$. Suppose that $(\widetilde\phi_0^k,\widetilde\phi_1^k)\neq (0,0)$ and that $(\widetilde\phi_0^j,\widetilde\phi_1^j)=(0,0)$ for any $j\in \{1,\dots,16\}$, $j\neq k$. Then the corresponding solution of \eqref{eq:sistema-completo} satisfies
$$\widetilde \varphi_j(t)=0 \qquad \text{for any } t\in \R \quad \text{and} \quad j\in \{1,\dots,16\}, j\neq k \, .$$
\end{proposition}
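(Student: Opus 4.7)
The plan is to use an \emph{ansatz-and-uniqueness} argument. Since the nonlinearity in \eqref{eq:sistema-completo} is a polynomial in the $\widetilde\varphi_j$'s, the associated $\R^{32}$-valued first order system is locally Lipschitz (as noted in the proof of Theorem \ref{hyperbolic}), so Cauchy data determine the solution uniquely. It therefore suffices to exhibit a solution of \eqref{eq:sistema-completo} with $\widetilde\varphi_j\equiv0$ for every $j\neq k$ and then invoke uniqueness.

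The candidate is obtained by letting $\widetilde\varphi_k$ solve the single scalar ODE
\begin{equation*}
\widetilde\varphi_k''(t)+(\gamma\widetilde\mu_k+A_k)\,\widetilde\varphi_k(t)+B_{kkkk}\,\widetilde\varphi_k^3(t)=0,\qquad \widetilde\varphi_k(0)=\widetilde\phi_0^k,\ \widetilde\varphi_k'(0)=\widetilde\phi_1^k,
\end{equation*}
and setting $\widetilde\varphi_j\equiv0$ for $j\neq k$. Substituting this ansatz into the $j$-th equation of \eqref{eq:sistema-completo} with $j\neq k$, the linear part vanishes; in the cubic sum the only triple $(j_1,j_2,j_3)$ with $j_1\ge j_2\ge j_3$ for which $\widetilde\varphi_{j_1}\widetilde\varphi_{j_2}\widetilde\varphi_{j_3}$ is not identically zero is $(k,k,k)$. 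Hence the $j$-th equation reduces to
\begin{equation*}
B_{kkkj}\,\widetilde\varphi_k^3(t)=0,
\end{equation*}
so that the ansatz is a genuine solution if and only if $B_{kkkj}=0$ for every $j\in\{1,\dots,16\}\setminus\{k\}$. Establishing this vanishing is the heart of the proof.

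I would split this verification into two cases according to the parity of $\widetilde w_j$ with respect to $y$. If $j\in\{15,16\}$, then $\widetilde w_j$ is odd in $y$ while $\widetilde w_k$ (with $k\le 14$) and $\Upsilon$ are even, so the integrand in $B_{kkkj}=\int_\Omega\Upsilon(y)\widetilde w_k^3\widetilde w_j\,dxdy$ is odd in $y$ and the integral over $(-\pi/150,\pi/150)$ vanishes. If $j\in\{1,\dots,14\}\setminus\{k\}$, one factors the integral as an $x$-part times a $y$-part and notes that the $x$-part equals
\begin{equation*}
\int_0^\pi\sin^3(kx)\sin(jx)\,dx=\frac{3}{4}\int_0^\pi\sin(kx)\sin(jx)\,dx-\frac{1}{4}\int_0^\pi\sin(3kx)\sin(jx)\,dx,
\end{equation*}
by the identity $\sin^3\theta=\tfrac{3}{4}\sin\theta-\tfrac{1}{4}\sin(3\theta)$. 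By $L^2(0,\pi)$-orthogonality of the sines this is nonzero only when $j=k$ or $j=3k$, and the first is excluded by hypothesis while the second is ruled out because $k\geq 5$ forces $3k\geq 15>14\geq j$.

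The content of the statement is thus the arithmetic observation $3k>14$ for $k\ge5$, which is what makes the restriction $k\in\{5,\dots,14\}$ essential: for $k\le 4$ the mode $3k$ would still lie in $\{1,\dots,14\}$ and the self-interaction of $\widetilde\varphi_k$ through the cubic nonlinearity would excite $\widetilde\varphi_{3k}$, as in the examples of Figures \ref{f:influence-1}--\ref{f:influence-2}. No other step involves genuine difficulty: the parity argument on $y$ is automatic, and uniqueness for \eqref{eq:sistema-completo} is standard. Once $B_{kkkj}=0$ is established for all $j\neq k$, the ansatz solves \eqref{eq:sistema-completo} and Proposition \ref{c:1} follows.
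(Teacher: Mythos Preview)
Your proof is correct and follows essentially the same route as the paper, which simply asserts that Proposition~\ref{c:1} is ``another consequence of Proposition~\ref{p:influence}~(i)'' together with the observation that for $k\in\{5,\dots,14\}$ the mode $\widetilde\varphi_k$ influences no other mode. Your argument makes explicit what the paper leaves implicit: the uniqueness-and-ansatz reduction, the parity argument for $j\in\{15,16\}$, and the trigonometric identity showing $\int_0^\pi\sin^3(kx)\sin(jx)\,dx=0$ unless $j\in\{k,3k\}$, which is exactly the Werner-formula computation alluded to in the proof of Proposition~\ref{p:influence}.
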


\subsection{Comparison of the results for the coupled and uncoupled systems}

In this section we show that the responses of the truncated system \eqref{eq:sistema-completo} are in line with the theoretical and numerical responses obtained
from the Hill equation \eq{Hill} in Sections \ref{secsuffcond} and \ref{numres}.\par
We first provide the definition of stability of solutions of \eqref{eq:sistema-completo} satisfying $(\widetilde \phi_0^{15},\widetilde\phi_1^{15})=(\widetilde\phi_0^{16},\widetilde\phi_1^{16})=(0,0)$, namely solutions of
\begin{equation} \label{eq:P0}
\tag{$P_0$}
\begin{cases}
\ds{\widetilde\varphi_k''(t)+(\gamma\widetilde
\mu_k+A_k)\widetilde\varphi_k(t)+\sum_{\underset{j_1\ge j_2\ge
j_3}{j_1,j_2,j_3=1}}^{16} B_{j_1 j_2 j_3 k}\, \widetilde
\varphi_{j_1}(t) \widetilde \varphi_{j_2}(t) \widetilde
\varphi_{j_3}(t)=0 \, , } & \ \ k\in \{1,\dots,16\} \, , \\[25pt]
\widetilde \varphi_k(0)=\widetilde \phi_0^k \, , \quad \widetilde \varphi_k'(0)=\widetilde \phi_1^k \, , & \ \  k\in \{1,\dots,14\} \, , \\[10pt]
\widetilde \varphi_{15}(0)=\widetilde \varphi'_{15}(0)=\widetilde \varphi_{16}(0)=\widetilde \varphi'_{16}(0)=0 \, .
\end{cases}
\end{equation}
In the sequel a solution of \eqref{eq:P0} will be denoted by $\Phi_0=(\widetilde\varphi_{1},\dots,\widetilde\varphi_{14},0,0)$, see Proposition \ref{p:uncoupled}.
We also denote by $\Phi=(\widetilde \varphi_1,\dots,\widetilde\varphi_{16})$ a general solution of \eqref{eq:sistema-completo}.

\begin{definition} \label{d:stability} Let $\widetilde \phi_0^1,\dots,\widetilde \phi_0^{14}$ and $\widetilde \phi_1^1,\dots,\widetilde \phi_1^{14}$ be fixed
and let $\Phi_0$ be the corresponding solution of \eqref{eq:P0}.
\begin{itemize}
\item[(i)] We say that a solution $\Phi_0$ of \eqref{eq:P0} is stable with respect to the first torsional mode if for any $\e>0$ there exists $\delta>0$
such that if $\max\{|\widetilde \phi_0^{15}|,|\widetilde \phi_1^{15}| \}<\delta$ and $(\widetilde\phi_0^{16},\widetilde\phi_1^{16})=(0,0)$, then the solution $\Phi$ of \eqref{eq:sistema-completo} corresponding to $\widetilde \phi_0^1,\dots,\widetilde \phi_0^{16}$ and $\widetilde \phi_1^1,\dots,\widetilde \phi_1^{16}$ satisfies
\begin{equation*}
|\widetilde \varphi_{15}(t)|<\e \, , \quad |\widetilde \varphi_{15}'(t)|<\e   \qquad \text{for any } t\in \R \, .
\end{equation*}

\item[(ii)] We say that a solution $\Phi_0$ of \eqref{eq:P0} is stable with respect to the second torsional mode if for any $\e>0$ there exists $\delta>0$
such that if $\max\{|\widetilde \phi_0^{16}|,|\widetilde \phi_1^{16}| \}<\delta$ and $(\widetilde\phi_0^{15},\widetilde\phi_1^{15})=(0,0)$, then the solution
$\Phi$ of \eqref{eq:sistema-completo} corresponding to $\widetilde \phi_0^1,\dots,\widetilde \phi_0^{16}$ and $\widetilde \phi_1^1,\dots,\widetilde \phi_1^{16}$ satisfies
\begin{equation*}
|\widetilde \varphi_{16}(t)|<\e \, , \quad |\widetilde \varphi_{16}'(t)|<\e \qquad \text{for any } t\in \R \, .
\end{equation*}
\end{itemize}
\end{definition}

Definition \ref{d:stability} should be compared with Definition \ref{newdeff}. So far, to find sufficient conditions on $\Phi_0$ which guarantee its stability
with respect to one of the torsional modes (according to Definition \ref{d:stability}) seems out of reach. However, Theorems \ref{stable}
and \ref{stable2} suggest the following

\begin{conjecture} \label{conj:1} Let $l\in\{1,2\}$. There exists $A_l>0$ such that if
$$
\max\{|\widetilde\phi_0^1|,\dots,|\widetilde\phi_0^{14}|,|\widetilde\phi_1^1|,\dots,|\widetilde\phi_1^{14}|\}<A_l
$$
then the corresponding solution $\Phi_0$ of \eqref{eq:P0} is stable with respect to the $l$-th torsional mode.
\end{conjecture}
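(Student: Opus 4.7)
The plan is to combine three ingredients: a parity analysis of the coefficients $B_{j_1 j_2 j_3 k}$, global control of the longitudinal dynamics via the conserved total energy, and a stability analysis of the torsional linearization that extends Theorems \ref{stable}--\ref{stable2} from a single longitudinal mode to the multi-mode reference trajectory $\Phi_0$.

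First I would exploit the parity of the eigenfunctions in $y$: the first fourteen $\widetilde w_k$ are even while $\widetilde w_{15}, \widetilde w_{16}$ are odd, so by inspection of \eqref{eq:B} the coefficient $B_{j_1 j_2 j_3 k}$ vanishes whenever the number of indices from $\{15,16\}$ in $\{j_1,j_2,j_3,k\}$ is odd. Setting $\Xi(t)=(\widetilde\varphi_{15}(t),\widetilde\varphi_{16}(t))$ and writing the solution $\Phi$ of \eqref{eq:sistema-completo} as $\Phi_0+\Delta$ plus the torsional block $\Xi$, the last two equations of \eqref{eq:sistema-completo} take the shape
\begin{equation*}
\Xi''(t) + \mathcal A\bigl(\Phi_0(t)+\Delta(t)\bigr)\,\Xi(t) + \mathcal R\bigl(\Phi(t),\Xi(t)\bigr) = 0,
\end{equation*}
where $\mathcal A(\cdot)$ is a symmetric $2\times 2$ matrix with entries quadratic in the fourteen longitudinal components, and $\mathcal R = O\bigl(|\Xi|^3\bigr)$. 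Writing $\mathcal A_0 = \mathrm{diag}(\gamma\widetilde\mu_{15}+A_{15},\, \gamma\widetilde\mu_{16}+A_{16})$, we have $\mathcal A(\Phi_0(t))-\mathcal A_0 = O(\|\Phi_0\|_\infty^2)$.

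Second, I would invoke conservation of the total energy associated with \eqref{eq:sistema-completo}, namely the Galerkin restriction of \eqref{energy}. Because the potential is nonnegative and quadratic-plus-quartic, this energy is coercive, and initial data bounded by $A_l$ yield a uniform-in-time bound $\sup_t \max_k \bigl(|\widetilde\varphi_k(t)|+|\widetilde\varphi_k'(t)|\bigr) \le C A_l$ for the full sixteen-dimensional solution. In particular $\|\Delta\|_\infty \le C A_l$ and $\Phi_0$ itself stays of order $A_l$ for all time.

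Third, I would analyse the linear Hill system $\Xi''+\mathcal A(\Phi_0(t))\Xi=0$. When $\Phi_0$ reduces to a single longitudinal mode $\overline\varphi_k$, the matrix $\mathcal A(\Phi_0(t))$ is (after the system reduction described around \eqref{Hillsystem}) exactly the Hill matrix underlying \eqref{Hill}, so Theorems \ref{stable}--\ref{stable2} combined with the result of \cite{yakubovich} cited there yield stability of the trivial solution for $A_l$ small under the non-resonance condition \eqref{strange} (or \eqref{strange2}--\eqref{strange3}). In the generic multi-mode case I would couple this with a perturbation argument: since $\mathcal A(\Phi_0(t))-\mathcal A_0$ is of size $O(A_l^2)$ and $\mathcal A_0$ is positive definite, the fundamental matrix of the linearization remains uniformly bounded provided no resonance occurs between $\sqrt{\gamma\widetilde\mu_{15}+A_{15}}, \sqrt{\gamma\widetilde\mu_{16}+A_{16}}$ and the fourteen longitudinal frequencies $\sqrt{\gamma\widetilde\mu_k+A_k}$. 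The nonlinear remainder $\mathcal R$, together with the $O(|\Xi|\,|\Delta|)$ correction coming from replacing $\Phi_0$ by $\Phi_0+\Delta$ in $\mathcal A$, is then absorbed by a Gronwall bootstrap using the global energy bound.

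The main obstacle is precisely the passage from a single-mode reference to a generic $\Phi_0$: the solution of \eqref{eq:P0} is not periodic but at best quasi-periodic (and could, in principle, be chaotic), so $\mathcal A(\Phi_0(t))$ is not a genuine Hill matrix and Floquet theory does not apply verbatim. One would need either a reducibility result in the spirit of KAM for linear quasi-periodic systems, or a Lyapunov-functional approach that bypasses diagonalization by using the conserved quadratic form of the energy restricted to the torsional sector. The companion difficulty is the verification of the non-resonance condition among the sixteen eigenfrequencies, which is generic but must be checked for the specific TNB parameters; these two points explain why the statement is advanced as a conjecture rather than a theorem.
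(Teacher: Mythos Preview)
The paper does not prove this statement: it is explicitly labeled a \emph{Conjecture}, and the authors write just before it that finding sufficient conditions on $\Phi_0$ guaranteeing stability with respect to a torsional mode ``seems out of reach''. What follows in the paper is purely numerical support (the computation of the thresholds $\widetilde A_l(k)$ in Table~\ref{wideA} for solutions of the restricted problems \eqref{eq:P0k}), not a proof. So there is no ``paper's own proof'' to compare your attempt against.

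Your outline is a reasonable sketch of how one might try to attack the conjecture, and you correctly isolate the two genuine obstructions: (a) for a generic $\Phi_0$ the reference trajectory is not periodic, so $\mathcal A(\Phi_0(t))$ is not a Hill matrix and neither the Zhukovskii nor the Burdina criterion used in Sections~\ref{stable proof}--\ref{secondproof} applies; (b) a non-resonance condition among all sixteen frequencies would be needed. But precisely because these obstructions are unresolved, your proposal is not a proof --- the ``perturbation argument'' and ``Gronwall bootstrap'' you invoke in the third step do not, as stated, yield uniform-in-time bounds on the fundamental matrix of a linear system with merely bounded (not periodic, not small) coefficients. A small $O(A_l^2)$ perturbation of a constant-coefficient system can still produce secular growth over long times unless one has a genuine reducibility or averaging result, which you do not supply. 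Your closing paragraph is honest about this, and it is exactly why the authors left the statement as a conjecture.
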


Our purpose is to support Conjecture \ref{conj:1} from a numerical point of view under suitable restrictions on the values of the initial conditions $\widetilde\phi_0^1,\dots,\widetilde\phi_0^{14},\widetilde\phi_1^{1},\dots,\widetilde\phi_1^{14}$ in \eqref{eq:P0}. For more numerical results concerning
Conjecture \ref{conj:1} we refer to \cite{bfg}.\par
For any $k\in \{1,\dots,14\}$ we consider the problem
\begin{equation} \label{eq:P0k}
\tag{$P_{0,k}$}
\begin{cases}
\ds{\widetilde\varphi_j''(t)+(\gamma\widetilde
\mu_j+A_j)\widetilde\varphi_j(t)+\sum_{\underset{j_1\ge j_2\ge
j_3}{j_1,j_2,j_3=1}}^{16} B_{j_1 j_2 j_3 j}\, \widetilde
\varphi_{j_1}(t) \widetilde \varphi_{j_2}(t) \widetilde
\varphi_{j_3}(t)=0 \, , } & \ \ j\in \{1,\dots,16\} \, , \\[25pt]
\widetilde \varphi_k(0)=A \, , \quad \widetilde \varphi_k'(0)=0  \, , \\[10pt]
\widetilde \varphi_{j}(0)=\widetilde \varphi'_{j}(0)=0\, ,  &  \ \ j\in \{1,\dots,16\} \,   , j\neq k \, .
\end{cases}
\end{equation}
Then for any $k\in \{1,\dots,14\}$ and $l\in \{15,16\}$ we also consider the following perturbation of \eqref{eq:P0k}
\begin{equation} \label{eq:P-eps-k}
\tag{$P_{\delta,k,l}$}
\begin{cases}
\ds{\widetilde\varphi_j''(t)+(\gamma\widetilde
\mu_j+A_j)\widetilde\varphi_j(t)+\sum_{\underset{j_1\ge j_2\ge
j_3}{j_1,j_2,j_3=1}}^{16} B_{j_1 j_2 j_3 j}\, \widetilde
\varphi_{j_1}(t) \widetilde \varphi_{j_2}(t) \widetilde
\varphi_{j_3}(t)=0 \, , } \qquad \quad  j\in \{1,\dots,16\} \, , \\[25pt]
\widetilde \varphi_k(0)=A \, , \quad \widetilde \varphi_k'(0)=0  \, , \\[10pt]
\widetilde \varphi_l(0)=\widetilde \varphi_l'(0)=\delta \, , \\[10pt]
\widetilde \varphi_{j}(0)=\widetilde \varphi'_{j}(0)=0\, , \qquad \qquad \qquad \qquad \qquad \qquad  j\in \{1,\dots,16\} \,   , j\neq k  \, , j\neq l \, .
\end{cases}
\end{equation}

In the particular case of solutions of \eqref{eq:P0k}, Conjecture \ref{conj:1} reads
\begin{center}
{\it for any $l\in\{1,2\}$ and $k\in\{1,\dots,14\}$ there exists $\widetilde A_l(k)>0$ such that for any $A\in(0,\widetilde A_l(k))$
the solution $\Phi_0$ of \eqref{eq:P0k} is stable with respect to the $l$-th torsional mode}\, .
\end{center}
Numerical simulations give the approximate values of $\widetilde A_l(k)$ as reported in Table \ref{wideA}.

\begin{table}[ht]
\begin{center}
{\small
\begin{tabular}{|c|c|c|c|c|c|c|c|c|c|c|c|c|c|c|}
\hline
$k$ & 1 & 2 & 3 & 4 & 5 & 6 & 7 & 8 & 9 & 10 & 11 & 12 & 13 & 14 \\
\hline
$\widetilde A_1(k)$ &  2.38 & 1.89 & 3.83 & 2.27 & 1.92 & 1.66 &  1.02  & $>10$ & $>10$ & $>10$ &  0.62  &  1.08  & 1.46   & 1.83 \\
\hline
$\widetilde A_2(k)$ &  5.17 & 4.38 & 4.94 & 8.08 & 4.18 & 4.05 &  3.87  &  3.59  &  3.10 &  1.87 & $>10$ & $>10$ & $>10$ & $>10$ \\
\hline
\end{tabular}}
\caption{Numerical values of $\widetilde A_1(k)$ and $\widetilde A_2(k)$ when $\gamma=5.17\cdot 10^{-4}$.}\label{wideA}
\end{center}
\end{table}

\begin{remark}\label{secondarem}
{\rm From Table \ref{wideA} we see that $\widetilde A_1(k)>\widetilde A_2(k)$ provided that $k=8,9,10$ whereas for lower $k$ we have
that $\widetilde A_1(k)<\widetilde A_2(k)$.}
\end{remark}

Let us explain how we obtained Table \ref{wideA}. As a particular example we take the value of $A_1(5)$. In Figure \ref{f:influence-5} we plot the
graph of the component $\widetilde\varphi_5$ of the solution $\Phi_0$ of \eqref{eq:P0k} with $k=5$ and $A=1.92$.

\begin{figure}[ht]
\begin{center}
{\includegraphics[height=40mm, width=100mm]{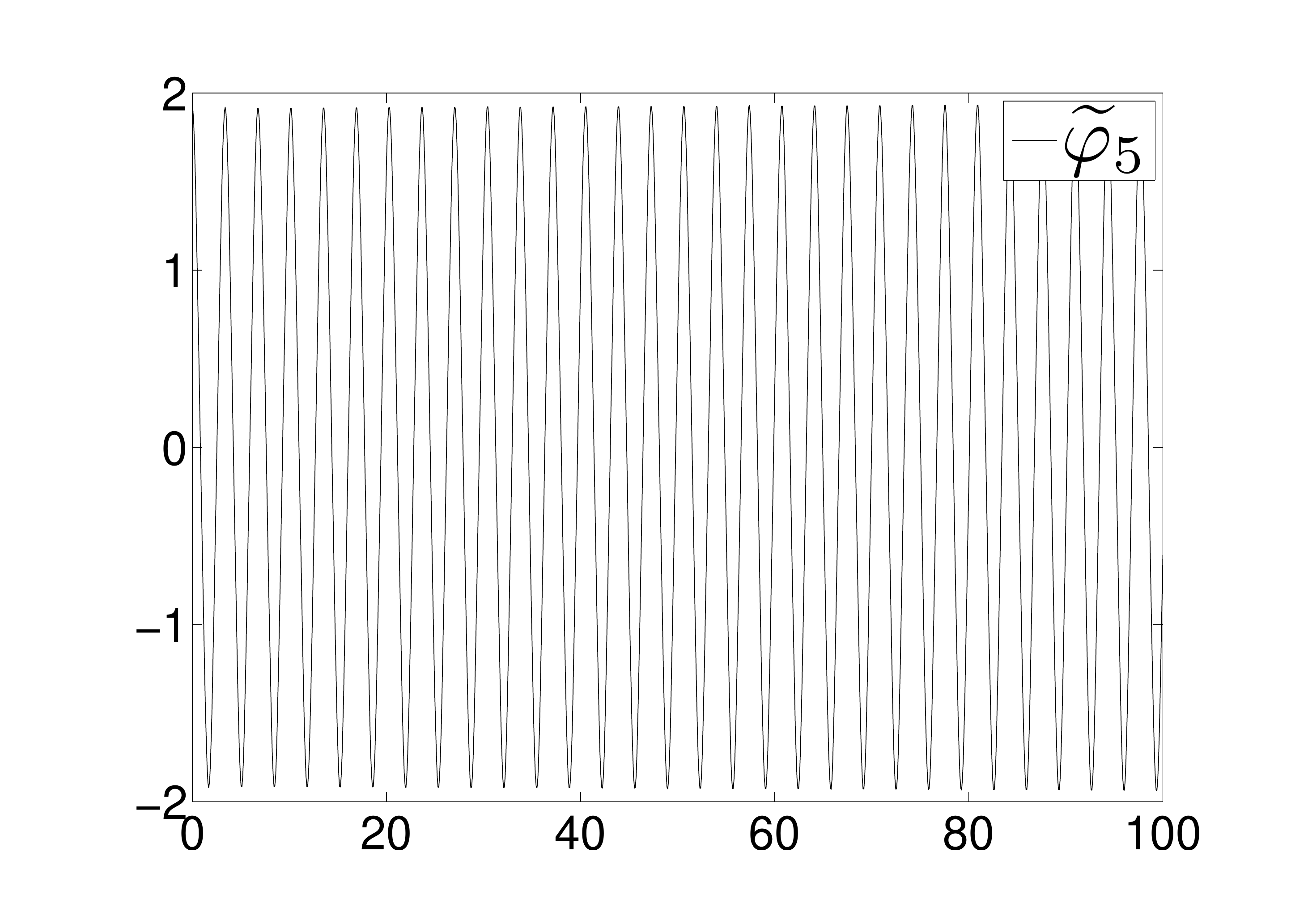}}
\caption{The component $\widetilde\varphi_5$ of the solution $\Phi_0$ of \eqref{eq:P0k} with $k=5$ and $A=1.92$.}\label{f:influence-5}
\end{center}
\end{figure}

The instability phenomenon appearing for $A=1.92$ is represented in Figure \ref{f:influence-5-pert} where we see that the amplitude of the
oscillations of $\widetilde\varphi_{15}$ is fairly wide for large $t$ even if we choose a small value of $\delta$ in \eqref{eq:P-eps-k}.

\begin{figure}[ht]
\begin{center}
{\includegraphics[height=90mm, width=170mm]{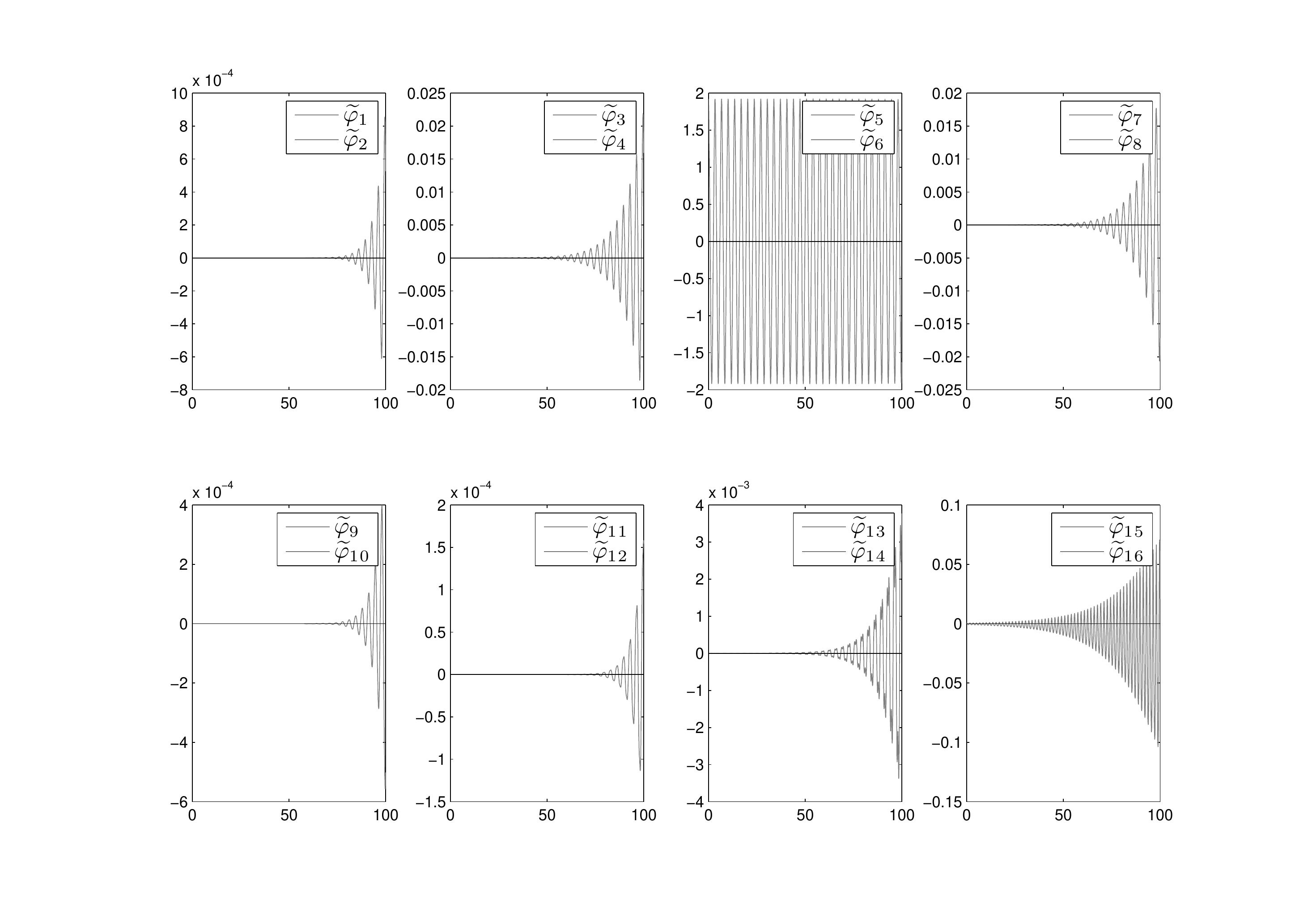}}
\caption{The components of the solution $\Phi$ of \eqref{eq:P-eps-k} with $k=5$, $l=15$, $A=1.92$ and $\delta=5\cdot 10^{-4}$.}\label{f:influence-5-pert}
\end{center}
\end{figure}

On the other hand, if we choose $A=1.91$ we see in Figure
\ref{f:1.91} that the amplitude of the oscillations of
$\widetilde\varphi_{15}$ remains of the same order of magnitude
also when $t$ is large. This is how we obtained $A_1(5)=1.92$ in
Table \ref{wideA}.

\begin{figure}[ht]
\begin{center}
{\includegraphics[height=50mm, width=90mm]{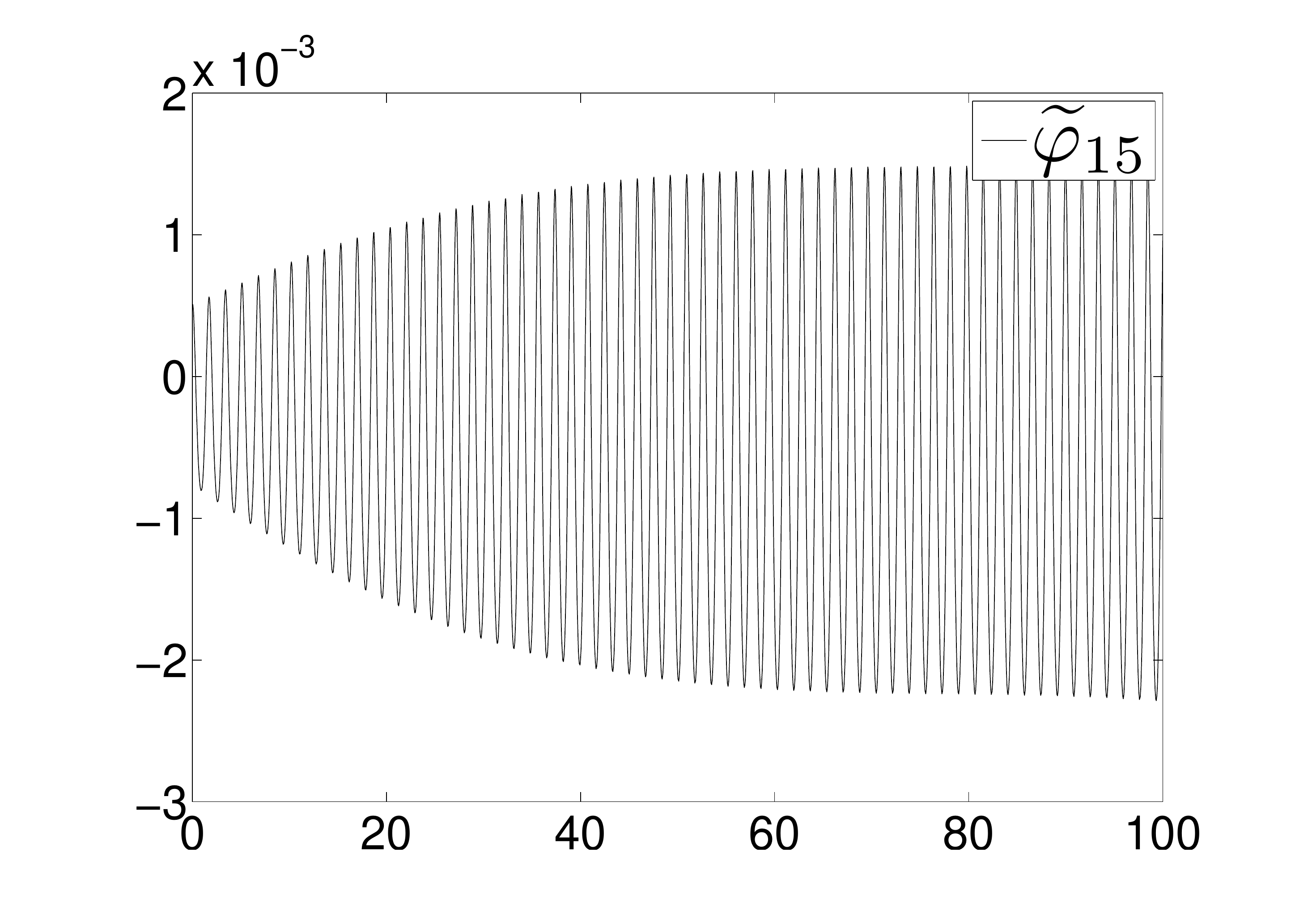}}
\caption{The function $\widetilde\varphi_{15}$ corresponding to the solution of \eqref{eq:P-eps-k} with $k=5$, $l=15$, $A=1.91$ and $\delta=5\cdot 10^{-4}$.} \label{f:1.91}
\end{center}
\end{figure}

The comparison between Table \ref{wideA} and Table
\ref{threegamma} (middle table for $\gamma=5.17\cdot 10^{-4}$)
deserves several comments. The values of $\widetilde A_1(k)$ when
$k\in \{5,\dots,14\}\setminus\{8,9,10\}$ and of $\widetilde
A_2(k)$ when $k\in \{5,\dots,10\}$ are very close to the
corresponding ones of $A_1(k)$ and $A_2(k)$ in Table
\ref{threegamma}. The reason is that for these values of $k$ the
solution $\Phi_0$ of \eqref{eq:P0k} satisfies
$\widetilde\varphi_j\equiv 0$ for any $j\neq k$, see Proposition
\ref{c:1}, hence the equations are decoupled. \par Then, with the
same choice of $k$, fix $\delta>0$, $l\in \{15,16\}$ and let
$\Phi_\delta$ be the corresponding solution of \eqref{eq:P-eps-k}.
If $\delta$ is small enough the function $\Phi_\delta$ is expected
to be close to $\Phi_0$ until $\widetilde\varphi_l$ remains small
together with its derivative. And indeed, if we chose $k=5$,
$l=15$, $A=1.92$, $\delta=5\cdot 10^{-4}$, we numerically obtain
the plots of Figure \ref{f:influence-5-pert}.\par We also observe
that for larger values of $A$, say $A=1.94$, when the fifteenth
component of $\Phi_\delta$ is no longer ``negligible'' the values
of the fifth component $\widetilde\varphi_5$ of $\Phi_\delta$ are
considerably different from the values of the fifth component of
$\Phi_0$, at least for large $t$: in the latter case
$\widetilde\varphi_5$ is periodic while in the former case
$\widetilde\varphi_5$ exhibits a variation in the amplitude of the
oscillations soon after the amplitude of the oscillations of the
fifteenth component has become relatively large, as one can see
from Figure \ref{f:1.94}.

\begin{figure}[ht]
\begin{center}
{\includegraphics[height=50mm, width=120mm]{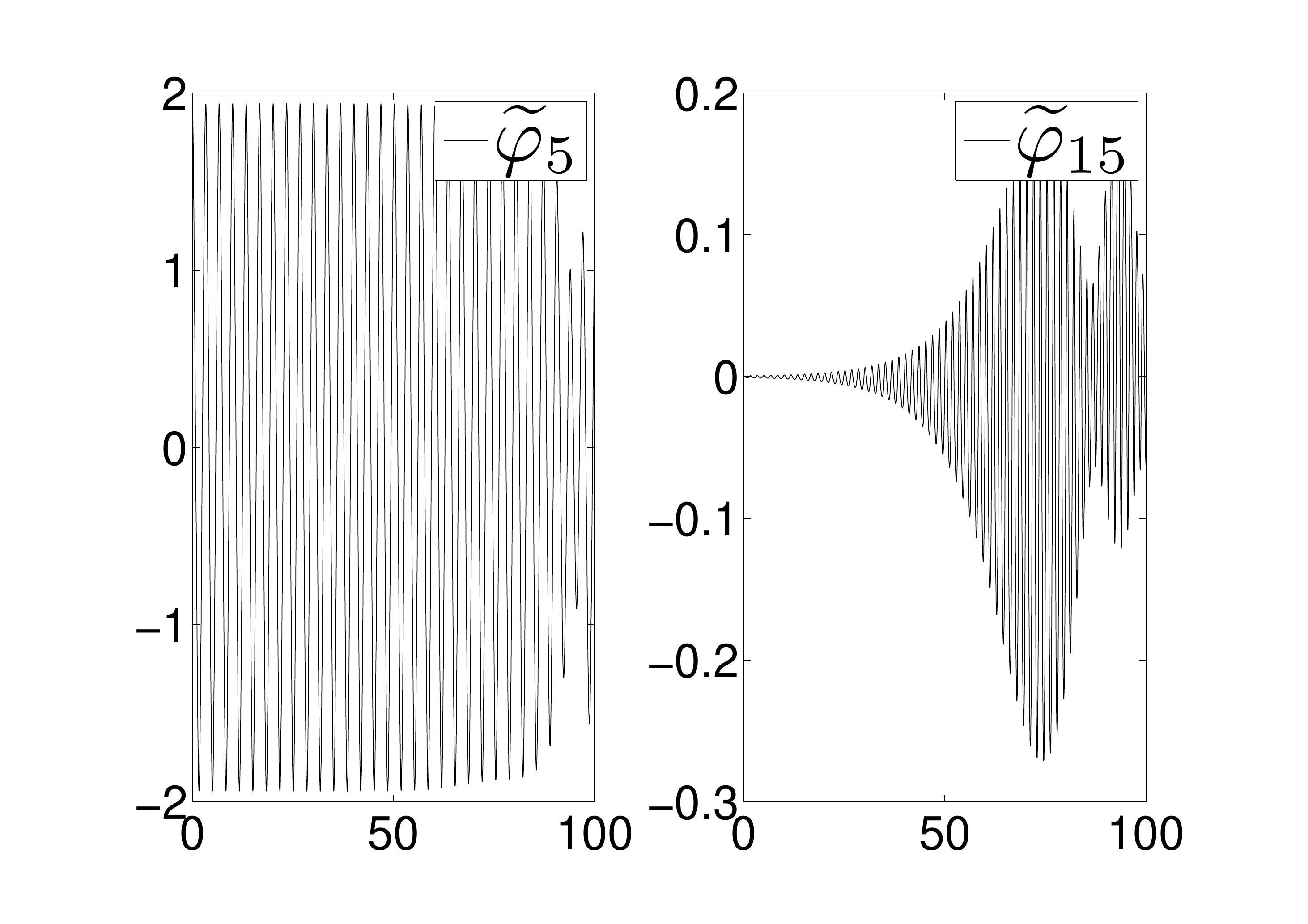}}
\caption{The functions $\widetilde\varphi_5$ and $\widetilde\varphi_{15}$ corresponding to the solution $\Phi$ of \eqref{eq:P0k} with $k=5$ and
$A=1.94$.}\label{f:1.94}
\end{center}
\end{figure}

The critical values $\widetilde A_1(8)$, $\widetilde A_1(9)$,
$\widetilde A_1(10)$, $\widetilde A_2(11)$, $\widetilde A_2(12)$,
$\widetilde A_2(13)$, $\widetilde A_2(14)$ are very large in both
Tables \ref{threegamma} and \ref{wideA}. No evident instability
behavior was detected when considering solutions of \eqref{ode1}
and \eqref{Hill} on one hand and solutions of \eqref{eq:P-eps-k}
on the other hand.\par It remains to discuss the cases where $k\in
\{1,2,3,4\}$ for which the values of $\widetilde A_1(k)$ and
$\widetilde A_2(k)$ in Table \ref{wideA} are quite different from
the values of $A_1(k)$ and $A_2(k)$ in the middle Table
\ref{threegamma}. By looking at Figures \ref{f:influence-1} and
\ref{f:influence-2} one can see the graphs of the components of
$\Phi_0$, the solution of \eqref{eq:P0k}, respectively in the
cases $k=1$ and $k=2$ (for the two remaining cases $k=3,4$ we
refer to \cite{bfg}). In the two figures one observes that the
$k$-th component of $\Phi_0$ is not the only large one, there are
several other large components. In this case, the reduction of
\eqref{eq:sistema-completo} to equations \eqref{ode1} and
\eqref{Hill} {\em is not a good approximation of the real
situation}.



\subsection{Thresholds of stability in the TNB}  \label{s:TNB}
In this subsection, starting from the values obtained in Table
\ref{wideA}, we provide estimates on the thresholds of stability
for the TNB by using the isolated system \eqref{wave-3} with the
values of the parameters given in Section \ref{numres}. According
to the notations of Subsection \ref{truncsys}, we denote by
$\widetilde w_k$, $k\in \{0,\dots,14\}$ the first fourteen
longitudinal eigenfunctions normalized in $L^2(\Omega)$. For any
$k\in \{0,\dots,14\}$ and $l\in\{1,2\}$ we choose the initial
conditions in \eqref{wave-3} in the form
\begin{equation} \label{eq:u000}
u_0(x,y)=\widetilde A_l(k) \widetilde w_k(x,y) \, , \quad
u_1(x,y)=0 \qquad \text{for any } (x,y)\in (0,\pi)\times
(-\ell,\ell)
\end{equation}
where the $\widetilde A_l(k)$ are as in Table \ref{wideA}. Our
purpose is to provide, for any $k\in \{1,\dots,14\}$, the values
measured in meters of the stability threshold for the energy
transfer from the $k$-th longitudinal mode to a torsional mode.
This will give an idea of the initial displacement of the deck
sufficient to activate the torsional oscillations. To this end, we
first give in Table \ref{t:L-infty-norms} the approximate values
of the $L^\infty$-norms of the $L^2$-normalized eigenfunctions.
\begin{table}[ht]
\begin{center}
{\small
\begin{tabular}{|c|c|c|c|c|c|c|c|c|c|c|c|c|c|c|}
\hline
$k$ & 1 & 2 & 3 & 4 & 5 & 6 & 7 & 8 & 9 & 10 & 11 & 12 & 13 & 14 \\
\hline
$\|\widetilde w_k\|_{L^\infty}$ \! & \!3.897\! & \!3.899\! & \!3.899\! & \!3.900\! & \!3.901\! & \!3.902\! & \!3.904\! & \!3.905\! & \!3.907\! & \!3.909\! & \!3.912\! & \!3.914\! & \!3.917\! & \!3.920 \\
\hline
\end{tabular}}
\caption{Approximate values of $\|\widetilde w_k\|_{L^\infty}$ for
$k\in \{1,\dots,14\}$.} \label{t:L-infty-norms}
\end{center}
\end{table}

By \eqref{eq:u000}, Table \ref{wideA}, Table \ref{t:L-infty-norms}
and \eqref{gamma} with $k_1=k_2=\frac{6000H}{L^3}$ we obtain the
following table which shows that the instability amplitude has the
same order of magnitude as observed for the TNB, see
\cite{tacoma}.

\begin{table}[ht]
\begin{center}
{\small
\begin{tabular}{|c|c|c|c|c|c|c|c|c|c|c|c|c|c|c|}
\hline
$k$ & 1 & 2 & 3 & 4 & 5 & 6 & 7 & 8 & 9 & 10 & 11 & 12 & 13 & 14 \\
\hline
$\|u_0\|_{L^\infty}$ & \!9.27\! & \!7.37\! & \!14.93\! & \!8.85\! & \!7.45\! & \!6.48\! & \!3.98\! & \!$>40$\! & \!$>40$\! & \!$>40$\! & \!2.43\! & \!4.23\! & \!5.72\! & \!7.17 \\
\hline
$\|u_0\|_{L^\infty}$ & \!20.15\! & \!17.08\! & \!19.26\! & \!31.51\! & \!16.30\! & \!15.80\! & \!15.11\! & \!14.02\! & \!12.11\! & \!7.31\! & \!$>40$\! & \!$>40$\! & \!$>40$\! & \!$>40$ \\
\hline
\end{tabular}}
\caption{The $L^\infty$-norm of $u_0$ measured in meters
corresponding to the stability threshold of the $k$-th
longitudinal mode with respect to the first torsional mode (first
line) and the second torsional mode (second line).}
\label{t:displacement}
\end{center}
\end{table}

\section{Proof of Theorem \ref{stable}}\label{stable proof}

Assume that $1\leq k\leq 14$ and $l\in\{1,2\}$ are given. We introduce two constants which will be useful in the sequel. We set
\neweq{mlk}
\delta_l:=\gamma \nu_{l,2}+\bar a_l\,,\qquad \rho_k:=\gamma \mu_{k,1}+a_k\,.
\endeq
Using the definition of $\rho_k$ in \eq{mlk}, for any $E>0$ we put
\neweq{Lambdapm}
\Lambda_{\pm}^k(E)=\frac{\sqrt{\rho_k^2+4b_kE}\pm\rho_k}{b_k}\,.
\endeq
Then \eq{energyEk} reads
\neweq{quadrato}
(\overline\varphi'_k)^2=\frac{b_k}{2}(\Lambda_{+}^k(E)+\overline\varphi_k^2)(\Lambda_{-}^k(E)-\overline\varphi_k^2)\,.
\endeq
Hence, since any $k$-th longitudinal mode $\overline \varphi_k$ satisfies \eq{energyEk}, we deduce
\begin{equation}\label{gksupn}
\|\overline \varphi_k\|_\infty=\sqrt{{\Lambda_{-}^k(E)}}\,.
\end{equation}

Since in the equation \eq{ode1} there are only odd terms, the period $T_k(E)$ of $\overline\varphi_k$ is the double of the width of an interval of
monotonicity of $\overline{\varphi}_k$. As the problem is autonomous, we may assume that $\overline{\varphi}_k(0)=-\|\overline{\varphi}_k\|_\infty$
and $\overline{\varphi}_k'(0)=0$. By symmetry and periodicity we then have that $\overline{\varphi}_k(T_k/2)=\|\overline{\varphi}_k\|_\infty$ and
$\overline{\varphi}_k'(T_k/2)=0$. In the first interval of monotonicity of $\overline{\varphi}_k$ we may take the square root of \eq{quadrato} and obtain
\neweq{squareroot}
\overline\varphi'_k(t)=\sqrt{\frac{b_k}{2}(\Lambda_{+}^k(E)+\overline\varphi_k^2(t))(\Lambda_{-}^k(E)-\overline\varphi_k^2(t))}\qquad\forall t\in\left[0,\frac{T_k}{2}\right]\,.
\endeq
By separating variables, and integrating over the interval $(0,T_k/2)$ we obtain
$$\frac{T_k(E)}{2}=\sqrt{\frac{2}{b_k}}\int_{-\|\overline{\varphi}_k\|_\infty}^{\|\overline{\varphi}_k\|_\infty}
\frac{ds}{\sqrt{(\Lambda_+^k(E)+s^2)(\Lambda_-^k(E)-s^2)}}\,.$$
Using the fact that the integrand is even with respect to $s$ and with a change of variable we get
\neweq{period}
T_k(E)=\frac{4\sqrt{2}}{\sqrt{b_k}}\int_0^1\frac{ds}{\sqrt{(\Lambda_+^k(E)+\Lambda_-^k(E)s^2)(1-s^2)}}\, .
\endeq
In particular,
\neweq{map}
\mbox{the map }E\mapsto T_k(E)\mbox{ is strictly decreasing and }\lim_{E\to0}T_k(E)=T_k(0)=\frac{2\pi}{\sqrt{\rho_k}}\, .
\endeq

Let us prove the following sufficient condition for the torsional stability of $\overline \varphi_k$.

\begin{lemma}\label{suffcond}
If there exists an integer $m$ such that
\neweq{zukn}
\frac{4m^2\pi^2}{\delta_l}\le T_k(E)^2 \le \frac{4(m+1)^2\pi^2}{\delta_l+d_{l,k}\Lambda_-^k(E)}
\endeq
then $\overline \varphi_k$ is stable with respect to the $l$-th torsional mode.
\end{lemma}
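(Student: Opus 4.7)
The plan is to recognise \eqref{Hill} as a Hill equation whose coefficient $A_{l,k}(t)=\delta_l+d_{l,k}\overline\varphi_k^2(t)$ is positive and periodic, and to apply the classical Zhukovskii stability criterion. The whole argument reduces to pinning down the exact period of $A_{l,k}$ and controlling its range by $\delta_l$ and $d_{l,k}\Lambda_-^k(E)$.

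First I would establish that $A_{l,k}$ has period $T_k(E)/2$. Equation \eqref{ode1} is autonomous and its right-hand side is odd in $\varphi_k$, so if $\overline\varphi_k$ is the solution normalised by $\overline\varphi_k(0)=-\|\overline\varphi_k\|_\infty$ and $\overline\varphi_k'(0)=0$, then $t\mapsto -\overline\varphi_k(t+T_k(E)/2)$ solves \eqref{ode1} with initial values $-\overline\varphi_k(T_k(E)/2)=-\|\overline\varphi_k\|_\infty$ and $-\overline\varphi_k'(T_k(E)/2)=0$, which coincide with those of $\overline\varphi_k$. By uniqueness, $\overline\varphi_k(t+T_k(E)/2)=-\overline\varphi_k(t)$, so $\overline\varphi_k^2$ (and hence $A_{l,k}$) is $\frac{T_k(E)}{2}$-periodic.

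Second, I would identify the extrema of $A_{l,k}$. Since $0\le\overline\varphi_k^2(t)\le\|\overline\varphi_k\|_\infty^2=\Lambda_-^k(E)$ by \eqref{gksupn}, and the two bounds are attained on each period, one gets
$$\min_{t}A_{l,k}(t)=\delta_l,\qquad \max_{t}A_{l,k}(t)=\delta_l+d_{l,k}\Lambda_-^k(E).$$
Set $\tau:=T_k(E)/2$. The hypothesis \eqref{zukn} then rewrites as
$$\left(\frac{m\pi}{\tau}\right)^2\le \delta_l\le A_{l,k}(t)\le \delta_l+d_{l,k}\Lambda_-^k(E)\le\left(\frac{(m+1)\pi}{\tau}\right)^2\qquad\forall t.$$

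Finally I invoke the Zhukovskii stability criterion for Hill's equation (see e.g.\ Magnus--Winkler): if a positive, continuous, $\tau$-periodic coefficient $A(t)$ of $\xi''+A(t)\xi=0$ satisfies $(m\pi/\tau)^2\le A(t)\le((m+1)\pi/\tau)^2$ for all $t$ and some integer $m\ge 0$, then the trivial solution is stable. Applied with the bounds just proved, this yields that $\xi\equiv 0$ is stable for \eqref{Hill}, which by Definition \ref{newdeff} means that $\overline\varphi_k$ is stable with respect to the $l$-th torsional mode. The main subtle point is the determination of the correct period: the symmetry $\overline\varphi_k(t+T_k/2)=-\overline\varphi_k(t)$ halves the period, and it is precisely this that produces the factor $4$ on the left- and right-hand sides of \eqref{zukn}; using $T_k$ in place of $T_k/2$ would give the wrong (far weaker) bounds.
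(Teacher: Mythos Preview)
Your proof is correct and follows essentially the same route as the paper: identify the half-period $T_k(E)/2$ of $A_{l,k}$ via the odd symmetry of \eqref{ode1}, bound $A_{l,k}$ sharply between $\delta_l$ and $\delta_l+d_{l,k}\Lambda_-^k(E)$ using \eqref{gksupn}, and then apply Zhukovskii's criterion. The paper is slightly terser (it simply asserts the $T_k/2$-periodicity of $\overline\varphi_k^2$ and cites \cite{zhk,yakubovich}), but the substance is identical.
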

\begin{proof} With the initial conditions $\xi(0)=\dot{\xi}(0)=0$, the unique solution of \eq{Hill} is $\xi\equiv0$. The statement follows if we
prove that \eq{zukn} is a sufficient condition for the trivial solution $\xi\equiv0$ to be stable in the Lyapunov sense, namely if the solutions of
\eq{Hill} with small initial data $|\xi(0)|$ and $|\dot{\xi}(0)|$ remain small for all $t\ge0$.\par
Since $\overline \varphi_k$ is $T_k$-periodic, the function $\overline \varphi_k^2$ is $T_k/2$-periodic. Then $A_{l,k}(t)$ is a positive $T_k/2$-periodic
function and a stability criterion for the Hill equation due to Zhukovskii \cite{zhk}, see also \cite[Chapter VIII]{yakubovich}, states that the trivial
solution of \eq{Hill} is stable provided that
$$
\frac{4 m^2\pi^2}{T_k(E)^2}\leq A_{l,k}(t)\leq \frac{4 (m+1)^2\pi^2}{T_k(E)^2}\qquad\forall t\in\R\,
$$
for some integer $m\geq 0$. By recalling \eq{mlk} and the definition of $A_{l,k}$ in \eq{Alk}, this condition is equivalent to
$$
\frac{4 m^2\pi^2}{T_k(E)^2}\leq \delta_l+ d_{l,k}\overline \varphi_k^2(t)\leq \frac{4 (m+1)^2\pi^2}{T_k(E)^2}\qquad\forall t\in\R\, .
$$
In turn, by invoking \eq{gksupn} we see that the latter is equivalent to \eq{zukn}.\end{proof}

\begin{remark}{\em For $E=0$ the right hand side of \eq{zukn} equals $\frac{4(m+1)^2\pi^2}{\delta_l}$ which is strictly larger than the left hand side
of \eq{zukn}. But the right hand side of \eq{zukn} is strictly decreasing with respect to $E$ and tends to $0$ as $E\to\infty$. Therefore, the
interval defined by \eq{zukn} is empty for sufficiently large $E$.}
\end{remark}

Let
\neweq{mm}
m=\max\left\{k\in\N;\, k<\sqrt{\frac{\delta_l}{\rho_k}}\right\}
\endeq
so that, by \eq{map}, $\frac{4m^2\pi^2}{\delta_l}<T_k(0)^2$. We then infer that there exists $E_1(l,k)>0$ such that
\neweq{primastima}
\frac{4m^2\pi^2}{\delta_l}\le T_k(E)^2\qquad\forall E\le E_1(l,k)\, .
\endeq
This gives a sufficient condition for the first inequality in \eq{zukn} to be satisfied.\par
In view of \eq{mm} and of the assumption \eq{strange} we know that
$$T_k(0)^2=\frac{4\pi^2}{\rho_k}<\frac{4(m+1)^2\pi^2}{\delta_l}=\frac{4(m+1)^2\pi^2}{\delta_l+d_{l,k}\Lambda_-^k(0)}\, .$$
Then the continuity of the maps $E\mapsto T_k(E)$ and $E\mapsto \Lambda_-^k(E)$ implies that there exists $E_2(l,k)>0$ such that
\neweq{secondastima}
T_k(E)^2\le \frac{4(m+1)^2\pi^2}{\delta_l+d_{l,k}\Lambda_-^k(E)}\qquad\forall E\le E_2(l,k)\, .
\endeq
This gives a sufficient condition for the second inequality in
\eq{zukn} to be satisfied.\par We may now conclude the proof of
Theorem \ref{stable}. We choose $m$ as in \eq{mm}, we define
$E_1(l,k)$ and $E_2(l,k)$ as in
\eqref{primastima}-\eqref{secondastima}, and we take
$$E\le E_k^l:=\min\{E_1(l,k),E_2(l,k)\}$$
so that both \eq{primastima} and \eq{secondastima} are satisfied. Then Lemma \ref{suffcond} tells us that $\overline \varphi_k$ is stable with respect
to the $l$-th torsional mode. By taking $\Lambda=\Lambda_-^k$ as in \eq{Lambdapm} we readily obtain the $L^\infty$-bound for $\overline \varphi_k$
and the proof of Theorem \ref{stable} is so completed.\par\medskip

The just completed proof also enables us to give the following quantitative version of Theorem \ref{stable}.

\begin{theorem}\label{stableprecise}
Fix $1\leq k\leq 14$, $l\in\{1,2\}$ and let $E_k^l:=\min\{E_1(l,k),E_2(l,k)\}$, see \eqref{primastima}-\eqref{secondastima}.
Then the $k$-th longitudinal mode $\overline \varphi_k$ at energy $E(\phi_0^k,\phi_1^k)$ is stable with respect to the $l$-th torsional mode provided that
$E\le E_k^l$ or, equivalently, provided that $\|\overline \varphi_k\|_\infty^2\le \Lambda_-^k(E_k^l)$ where $\Lambda_-^k$ is defined in \eqref{Lambdapm}.
\end{theorem}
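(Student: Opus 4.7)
The plan is to prove the theorem by establishing the Hill equation \eqref{Hill} satisfies a classical stability criterion (Zhukovskii's) whenever the energy of $\overline\varphi_k$ is small enough, using the explicit description of the period of $\overline\varphi_k$ together with the genericity hypothesis \eqref{strange} inherited from Theorem \ref{stable}.

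First I would exploit the conserved quantity \eqref{energyEk} of \eqref{ode1}. Solving for $(\overline\varphi_k')^2$ as a quadratic in $\overline\varphi_k^2$ yields the factorization
\[
(\overline\varphi_k')^2=\frac{b_k}{2}\bigl(\Lambda_+^k(E)+\overline\varphi_k^2\bigr)\bigl(\Lambda_-^k(E)-\overline\varphi_k^2\bigr),
\]
with $\Lambda_\pm^k(E)$ as in \eqref{Lambdapm}. From this identification it is immediate that $\|\overline\varphi_k\|_\infty^2=\Lambda_-^k(E)$, which gives the equivalence between the two bounds in the statement.

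Next I would compute the period $T_k(E)$: since \eqref{ode1} is autonomous with odd nonlinearity, I can normalize initial data to $\overline\varphi_k(0)=-\|\overline\varphi_k\|_\infty$, $\overline\varphi_k'(0)=0$, take the positive square root of the above identity on the first monotone half-period, separate variables and perform the usual rescaling $s=\overline\varphi_k/\|\overline\varphi_k\|_\infty$ to obtain the integral formula for $T_k(E)$ in \eqref{period}. This formula shows $E\mapsto T_k(E)$ is strictly decreasing with $T_k(0)=2\pi/\sqrt{\rho_k}$.

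The key observation is then that, plugging $\overline\varphi_k$ into \eqref{Alk}, the coefficient $A_{l,k}(t)=\delta_l+d_{l,k}\overline\varphi_k^2(t)$ is a \emph{positive} and $T_k/2$-periodic function (since $\overline\varphi_k^2$ has half the period of $\overline\varphi_k$). I would then invoke the Zhukovskii stability criterion for Hill's equation: if there exists $m\in\N$ with
\[
\frac{4m^2\pi^2}{(T_k(E)/2\cdot 2)^2}\le A_{l,k}(t)\le\frac{4(m+1)^2\pi^2}{T_k(E)^2}\quad \forall t,
\]
then the trivial solution of \eqref{Hill} is stable. Using $\|\overline\varphi_k\|_\infty^2=\Lambda_-^k(E)$, this two-sided bound on $A_{l,k}$ is equivalent to the two-sided bound \eqref{zukn} on $T_k(E)^2$, precisely the content of Lemma \ref{suffcond}.

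The final (and most delicate) step is to exhibit an integer $m$ for which both inequalities in \eqref{zukn} hold on a nontrivial range of energies. I would pick $m$ as in \eqref{mm}, i.e.\ the largest integer strictly less than $\sqrt{\delta_l/\rho_k}$. For the left inequality, $4m^2\pi^2/\delta_l<T_k(0)^2$ by the choice of $m$, so by continuity/monotonicity of $T_k$ the inequality persists for all $E\le E_1(l,k)$ with some explicit $E_1(l,k)>0$. For the right inequality, at $E=0$ both sides become $4(m+1)^2\pi^2/\delta_l$ versus $4\pi^2/\rho_k$, and the strict inequality $T_k(0)^2<4(m+1)^2\pi^2/\delta_l$ holds because the \emph{genericity hypothesis} \eqref{strange} forbids equality (which is exactly $\sqrt{\delta_l/\rho_k}=m+1\in\N$); continuity of $E\mapsto T_k(E)$ and $E\mapsto\Lambda_-^k(E)$ then yields some threshold $E_2(l,k)>0$ below which the right inequality in \eqref{zukn} holds. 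Taking $E_k^l=\min\{E_1(l,k),E_2(l,k)\}$ completes the proof.

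The main obstacle I anticipate is the careful use of \eqref{strange}: without it, the choice of $m$ from \eqref{mm} could make the right-hand inequality in \eqref{zukn} an equality at $E=0$, leaving no room to propagate it to positive energies by continuity alone. This is precisely the borderline case handled separately in Theorem \ref{stable2}, where the resonance condition \eqref{strange2} holds and a more refined analysis (using \eqref{strange3}) is needed.
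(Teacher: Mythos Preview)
Your proposal is correct and follows essentially the same route as the paper: the factorization of the conserved quantity to identify $\|\overline\varphi_k\|_\infty^2=\Lambda_-^k(E)$, the explicit period formula \eqref{period} and its monotonicity \eqref{map}, the Zhukovskii criterion (Lemma \ref{suffcond}) reduced to the two-sided bound \eqref{zukn}, the choice of $m$ via \eqref{mm}, and the use of \eqref{strange} plus continuity to obtain the thresholds $E_1(l,k)$ and $E_2(l,k)$ are exactly the steps in Section \ref{stable proof}. The paper then simply notes that Theorem \ref{stableprecise} is the quantitative restatement extracted from that proof.
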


\section{Proof of Theorem \ref{stable2}}\label{secondproof}

The proof of Theorem \ref{stable2} follows the same lines of the proof of Theorem \ref{stable}, see Section \ref{stable proof}.
But the continuity of the maps involved is not enough to obtain the desired inequality and a different stability criterion is needed.\par
Note that by \eq{mlk} the condition \eq{strange2} reads
\neweq{stranger}
\frac{\delta_l}{\rho_k}=(m+1)^2\, .
\endeq

The right hand side of \eq{period} is an elliptic integral of the first kind: after the further change of variables $s=\cos\phi$ it may be written as
\neweq{elliptic}
T_k(E)=\frac{4\sqrt{2}}{\sqrt{b_k}}\int_0^{\pi/2}\frac{d\phi}{\sqrt{\Lambda_+^k(E)+\Lambda_-^k(E)\cos^2\phi}}=
\frac{4}{\sqrt[4]{\rho_k^2+4b_k E}}\int_0^{\pi/2}\frac{d\phi}{\sqrt{1-\mu_k(E)\sin^2\phi}}
\endeq
where
\neweq{muk}
\mu_k(E)=\frac12 \left(1-\frac{\rho_k}{\sqrt{\rho_k^2+4b_k E}}\right)\ \in\ \left(0,\frac12\right)\, .
\endeq
This enables us to compute the derivative of $T_k(E)$ for $E=0$.

\begin{lemma}\label{derivataT}
Let $T_k=T_k(E)$ be the function in \eqref{elliptic}. Then
$$T_k'(0)=-\frac{3\, \pi\, b_k}{2\, \rho_k^{5/2}}\, .$$
\end{lemma}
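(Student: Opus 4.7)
The plan is to compute $T_k'(0)$ directly from the factored expression
$$T_k(E)=4\,F(E)\,G(\mu_k(E)),\qquad F(E):=(\rho_k^2+4b_kE)^{-1/4},\qquad G(\mu):=\int_0^{\pi/2}\frac{d\phi}{\sqrt{1-\mu\sin^2\phi}},$$
and differentiate using the chain and product rules. First I would record the base values at $E=0$: since $\mu_k(0)=0$, one has $G(\mu_k(0))=\pi/2$ and $F(0)=\rho_k^{-1/2}$, which is consistent with the known limit $T_k(0)=2\pi/\sqrt{\rho_k}$ in \eqref{map}.

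Next I would compute the three elementary derivatives. A direct calculation gives $F'(E)=-b_k(\rho_k^2+4b_kE)^{-5/4}$, hence $F'(0)=-b_k\rho_k^{-5/2}$. From $\mu_k(E)=\tfrac12-\tfrac{\rho_k}{2\sqrt{\rho_k^2+4b_kE}}$ I obtain $\mu_k'(E)=\rho_k b_k(\rho_k^2+4b_kE)^{-3/2}$, so $\mu_k'(0)=b_k\rho_k^{-2}$. Differentiating under the integral sign in $G$ and using $\int_0^{\pi/2}\sin^2\phi\,d\phi=\pi/4$, I get
$$G'(\mu)=\tfrac12\int_0^{\pi/2}\frac{\sin^2\phi}{(1-\mu\sin^2\phi)^{3/2}}\,d\phi,\qquad G'(0)=\frac{\pi}{8}.$$
The interchange of differentiation and integration is harmless at $E=0$ because $\mu_k(E)$ stays strictly below $1/2$ for $E$ in a neighborhood of $0$, so the integrand and its $\mu$-derivative are uniformly bounded on $[0,\pi/2]$.

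Finally I would assemble by the product rule:
$$T_k'(0)=4\bigl[F'(0)\,G(0)+F(0)\,G'(0)\,\mu_k'(0)\bigr]=4\left[-\frac{\pi b_k}{2\rho_k^{5/2}}+\frac{\pi b_k}{8\rho_k^{5/2}}\right]=-\frac{3\pi b_k}{2\rho_k^{5/2}},$$
which is the claimed value. There is no real obstacle here: the computation is a routine three-line chain-rule exercise once the formula for $T_k(E)$ is rewritten in the elliptic form \eqref{elliptic} and the bounded-integrand justification for differentiating $G$ is noted. The only place requiring a touch of care is the sign and factor of $\mu_k'(0)$, which relies on the identity $\mu_k(E)=\tfrac12-\tfrac{\rho_k}{2\sqrt{\rho_k^2+4b_kE}}$ coming from \eqref{muk}.
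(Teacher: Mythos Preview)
Your proof is correct and takes essentially the same approach as the paper: both compute $T_k'(0)$ from the factorization $T_k(E)=4(\rho_k^2+4b_kE)^{-1/4}\int_0^{\pi/2}(1-\mu_k(E)\sin^2\phi)^{-1/2}d\phi$ by expanding each factor to first order in $E$. The only cosmetic difference is that the paper writes this as an asymptotic expansion $T_k(E)\sim\frac{2\pi}{\sqrt{\rho_k}}\bigl(1-\tfrac{b_k}{\rho_k^2}E\bigr)\bigl(1+\tfrac{b_k}{4\rho_k^2}E\bigr)$ and reads off the linear coefficient, whereas you phrase the same computation via the product and chain rules; your version is slightly more explicit about the justification for differentiating under the integral sign.
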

\begin{proof} An asymptotic expansion of $\mu_k$ in \eq{muk} shows that
$$\mu_k(E)\sim \frac{b_k}{\rho_k^2}\, E\qquad\mbox{as }E\to0\, .$$
Then we may also expand $T_k(E)$ in \eq{elliptic} as follows
\begin{eqnarray*}
T_k(E) &\sim& \frac{4}{\sqrt{\rho_k}}\left(1-\frac{b_k}{\rho_k^2}\, E\right)\int_0^{\pi/2}\frac{d\phi}{\sqrt{1-\frac{b_k}{\rho_k^2}E\sin^2\phi}}\\
\ &\sim& \frac{4}{\sqrt{\rho_k}}\left(1-\frac{b_k}{\rho_k^2}\, E\right)\int_0^{\pi/2}\left(1+\frac{b_k}{2\rho_k^2}E\sin^2\phi\right)\, d\phi\\
\ &=& \frac{2\pi}{\sqrt{\rho_k}}\left(1-\frac{b_k}{\rho_k^2}\, E\right)\left(1+\frac{b_k}{4\rho_k^2}\, E\right)\quad\mbox{as }E\to0\, .
\end{eqnarray*}
This proves that
$$T_k(E)=T_k(0)-\frac{3\, \pi\, b_k}{2\, \rho_k^{5/2}}\, E+o(E)\qquad\mbox{as }E\to0$$
and the statement follows.\end{proof}

Due to the presence of odd terms, the solution of \eq{ode1} has several symmetry properties that we summarize in the next statement.
We also provide a pointwise upper bound for $\overline \varphi_k$.

\begin{lemma}\label{pointbound}
Let $\psi$ be the unique (periodic) solution of the problem
\neweq{psiorigin}
\begin{cases}
\psi''(t)+\rho_k\psi(t)+b_k \psi^3(t)=0 \qquad\forall t>0 \\
\psi(0)=0\, ,\quad \psi'(0)=\sqrt{2E}
\end{cases}
\endeq
and denote by $T$ its period. Then:\par
(i) $\psi(T/4)=\max_t \psi(t)$, $\psi(T/4-t)=\psi(T/4+t)$ for all $t$, $\psi(T/2+t)=-\psi(t)$ for all $t$;\par
(ii) the following estimate holds
$$0\le\psi(t)\le\min\left\{\sqrt{\frac{2E}{\rho_k}}\, \sin\left(\sqrt{\rho_k}\, t\right),\sqrt{{\Lambda_{-}^k(E)}}\right\}
\qquad\forall\ 0\le t\le \frac{T}4 \, .$$
\end{lemma}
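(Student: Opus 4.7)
The plan is to base both parts on energy conservation for \eq{psiorigin}, which yields
$$\frac{(\psi'(t))^2}{2}+\rho_k\frac{\psi(t)^2}{2}+b_k\frac{\psi(t)^4}{4}=E\qquad\forall\,t\in\R,$$
together with local uniqueness of the Cauchy problem for \eq{psiorigin}, whose right-hand side is smooth in $\psi$.

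For (i), I would first let $t^*>0$ be the smallest time at which $\psi'(t^*)=0$; such a $t^*$ exists because $\psi$ is globally defined and bounded in view of energy conservation, while $\psi'(0)=\sqrt{2E}>0$. On $[0,t^*]$ the function $\psi$ is strictly increasing, and energy conservation at $t=t^*$ forces $\psi(t^*)=\sqrt{\Lambda_-^k(E)}$. For the reflection $\psi(t^*-s)=\psi(t^*+s)$ I would set $\chi_1(s):=\psi(t^*-s)$ and $\chi_2(s):=\psi(t^*+s)$: both satisfy the ODE in \eq{psiorigin} with $\chi_i(0)=\sqrt{\Lambda_-^k(E)}$ and $\chi_i'(0)=0$, so local uniqueness gives $\chi_1\equiv\chi_2$. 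In particular $\psi(2t^*)=0$ and $\psi'(2t^*)=-\sqrt{2E}$. Then the oddness of the nonlinearity enters: setting $\zeta(s):=-\psi(s+2t^*)$, one verifies that $\zeta$ solves the ODE in \eq{psiorigin} with $\zeta(0)=0$ and $\zeta'(0)=\sqrt{2E}$, hence $\zeta\equiv\psi$ by uniqueness, i.e.\ $\psi(s+2t^*)=-\psi(s)$ for every $s$. Iterating gives $\psi(s+4t^*)=\psi(s)$, so the minimal period is $T=4t^*$, i.e.\ $t^*=T/4$, and all three claims of (i) follow.

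For (ii), nonnegativity of $\psi$ on $[0,T/4]$ is immediate from the strict monotonicity proved above and $\psi(0)=0$, while the upper bound $\psi(t)\le\sqrt{\Lambda_-^k(E)}$ coincides with $\psi(T/4)$ and hence also follows from monotonicity. For the trigonometric upper bound I would compare $\psi$ with the linear oscillator $\eta(t):=\sqrt{2E/\rho_k}\,\sin(\sqrt{\rho_k}\,t)$, which solves $\eta''+\rho_k\eta=0$ with the same initial data. Since $\psi$ is a strictly increasing bijection $[0,T/4]\to[0,\sqrt{\Lambda_-^k(E)}]$, separation of variables in the energy identity yields
$$t=\int_0^{\psi(t)}\frac{ds}{\sqrt{2E-\rho_k s^2-\tfrac{b_k}{2}s^4}}\ge\int_0^{\psi(t)}\frac{ds}{\sqrt{2E-\rho_k s^2}}=\frac{1}{\sqrt{\rho_k}}\,\arcsin\!\left(\psi(t)\sqrt{\tfrac{\rho_k}{2E}}\right),$$
where the $\arcsin$ is well defined thanks to the algebraic bound $\sqrt{\Lambda_-^k(E)}\le\sqrt{2E/\rho_k}$ (equivalent to the identity $\Lambda_-^k(E)=\tfrac{4E}{\sqrt{\rho_k^2+4b_kE}+\rho_k}\le\tfrac{2E}{\rho_k}$). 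Applying $\sin$ to both sides of the inequality then produces $\psi(t)\le\eta(t)$ as desired.

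The main subtlety is that the last step, i.e.\ inverting the $\arcsin$ inequality via $\sin$, is legitimate only if $\sqrt{\rho_k}\,t\in[0,\pi/2]$ throughout $[0,T/4]$, that is, only if $T/4\le\pi/(2\sqrt{\rho_k})$. This is precisely where \eq{map} enters: it gives $T=T_k(E)\le T_k(0)=2\pi/\sqrt{\rho_k}$, so the inversion is valid on the whole interval $[0,T/4]$. No further obstacle is anticipated; all other ingredients are standard ODE uniqueness and separation of variables.
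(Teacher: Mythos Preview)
Your proposal is correct and follows essentially the same route as the paper: part (i) via the standard symmetry/uniqueness argument (which the paper simply calls ``well-known calculus properties''), and part (ii) via energy conservation, separation of variables, and dropping the quartic term to obtain the $\arcsin$ inequality. Your explicit justification of the final $\sin$-inversion step through \eq{map}, ensuring $\sqrt{\rho_k}\,t\le\pi/2$ on $[0,T/4]$, is a point the paper leaves implicit but is indeed needed for the argument to be complete.
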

\begin{proof} The symmetry properties in $(i)$ are well-known calculus properties.\par
In order to prove $(ii)$, we observe that by conservation of energy and arguing as for \eq{squareroot}, we find
$$
\psi'=\sqrt{2E-\rho_k\psi^2-\frac{b_k}{2}\psi^4}\qquad\forall 0\le t\le \frac{T}4\,.
$$
By separating variables we then obtain
$$
\frac{d\psi}{\sqrt{2E-\rho_k\psi^2-\frac{b_k}{2}\psi^4}}=dt\, ;
$$
in turn, by integrating over $[0,t]$ and dropping the fourth power term, we get
$$
\frac1{\sqrt{\rho_k}}\arcsin\left(\sqrt{\frac{\rho_k}{2E}}\, \psi(t)\right)=\frac{1}{\sqrt{2E}}\int_0^{\psi(t)}\frac{d\xi}{\sqrt{1-\frac{\rho_k}{2E}\xi^2}}
\le\int_0^{\psi(t)}\frac{d\xi}{\sqrt{2E-\rho_k\xi^2-\frac{b_k}{2}\xi^4}}=\int_0^td\tau=t\, .
$$
This yields
$$\psi(t)\le \sqrt{\frac{2E}{\rho_k}}\, \sin\left(\sqrt{\rho_k}\, t\right)\qquad\forall\ 0\le t\le \frac{T}4$$
while from \eq{gksupn} we know that $\psi(t)\le \sqrt{{\Lambda_{-}^k(E)}}$ for all $t$. By taking the minimum, we infer the desired upper bound in $(ii)$.
\end{proof}

The previous lemmas enable us to prove the following technical result.

\begin{lemma}\label{suffcond2}
Assume that \eqref{strange3} and \eqref{stranger} hold. There
exists $E_2(l,k)>0$ such that if $E\le E_2(l,k)$ then
$$\int_0^{T_k(E)/2}\sqrt{A_{l,k}(t)}\, dt+\frac12 \log\left(\frac{\max_{t}\ A_{l,k}(t)}{\min_{t}\ A_{l,k}(t)}\right)\le (m+1)\pi\, .$$
\end{lemma}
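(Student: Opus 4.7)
The plan is to introduce
$F(E):=\int_0^{T_k(E)/2}\sqrt{A_{l,k}(t)}\, dt+\tfrac12\log\bigl(\max_t A_{l,k}(t)/\min_t A_{l,k}(t)\bigr)$
and to prove that $F(0)=(m+1)\pi$, while hypothesis \eqref{strange3} is exactly equivalent to the strict inequality $F'(0)<0$; continuity of $F$ then yields the desired $E_2(l,k)>0$.

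I would begin with the base case: when $E=0$ the unique solution of \eqref{ode1} is $\overline\varphi_k\equiv 0$, so by \eqref{Alk} the function $A_{l,k}$ reduces to the constant $\delta_l$, the log term vanishes, and the integral equals $\sqrt{\delta_l}\cdot T_k(0)/2=\pi\sqrt{\delta_l/\rho_k}=(m+1)\pi$ by \eqref{map} together with the resonance identity \eqref{stranger}.

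For the asymptotic expansion of $I(E):=\int_0^{T_k(E)/2}\sqrt{A_{l,k}(t)}\, dt$, I would use \eqref{squareroot} to change variables from $t$ to $\varphi=\overline\varphi_k$, rescale via $\varphi=\sqrt{\Lambda_-^k(E)}\, s$, and exploit evenness to reach
\begin{equation*}
I(E)=2\sqrt{\tfrac{2}{b_k}}\int_0^1\frac{\sqrt{\delta_l+d_{l,k}\Lambda_-^k(E)s^2}}{\sqrt{(\Lambda_+^k(E)+\Lambda_-^k(E)s^2)(1-s^2)}}\, ds\, .
\end{equation*}
Inserting $\Lambda_-^k(E)=2E/\rho_k+O(E^2)$ and $\Lambda_+^k(E)=2\rho_k/b_k+2E/\rho_k+O(E^2)$, expanding both square roots to first order in $E$, and using the elementary identities $\int_0^1(1-s^2)^{-1/2}\, ds=\pi/2$ and $\int_0^1 s^2(1-s^2)^{-1/2}\, ds=\pi/4$ yields
\begin{equation*}
I'(0)=\frac{\pi}{\rho_k^2}\left(\frac{d_{l,k}}{2(m+1)}-\frac{3(m+1)b_k}{4}\right)\, .
\end{equation*}
On the other hand, since $\max_t A_{l,k}=\delta_l+d_{l,k}\Lambda_-^k(E)$ and $\min_t A_{l,k}=\delta_l$, the log term is $L(E):=\tfrac12\log\bigl(1+d_{l,k}\Lambda_-^k(E)/\delta_l\bigr)$, and differentiating using $(\Lambda_-^k)'(0)=2/\rho_k$ and \eqref{stranger} gives at once $L'(0)=d_{l,k}/\bigl((m+1)^2\rho_k^2\bigr)$.

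Summing $I'(0)+L'(0)$ and multiplying by $4(m+1)^2\rho_k^2$, the strict inequality $F'(0)<0$ becomes $2\pi(m+1)d_{l,k}+4d_{l,k}<3\pi(m+1)^3 b_k$, i.e.\ $2\bigl(2+(m+1)\pi\bigr)d_{l,k}<3\pi(m+1)^3 b_k$, which is precisely hypothesis \eqref{strange3}. Since $E\mapsto F(E)$ is continuous (in fact smooth), this provides $E_2(l,k)>0$ such that $F(E)\le F(0)=(m+1)\pi$ for all $E\le E_2(l,k)$. The main obstacle is the simultaneous $E$-dependence of the interval $[0,T_k(E)/2]$ and of the integrand $\sqrt{A_{l,k}(t)}$: the rescaling $\varphi=\sqrt{\Lambda_-^k(E)}\, s$ decouples the two and reduces the first-order calculation to two elementary integrals, the integrable singularity $(1-s^2)^{-1/2}$ posing no problem for the exchange of differentiation and integration.
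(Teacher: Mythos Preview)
Your argument is correct and reaches exactly the same first-order coefficient as the paper, but the route is genuinely different. The paper works in the time variable: it expands $\sqrt{\delta_l+d_{l,k}\overline\varphi_k^2(t)}$ to first order, then invokes the pointwise bound $\overline\varphi_k(t)\le\sqrt{2E/\rho_k}\,\sin(\sqrt{\rho_k}\,t)$ from Lemma~\ref{pointbound} to control $\int_0^{T_k/4}\overline\varphi_k^2(t)\,dt$, and separately uses the period expansion $T_k'(0)=-3\pi b_k/(2\rho_k^{5/2})$ from Lemma~\ref{derivataT}; the outcome is an \emph{inequality} for $I(E)$ whose first-order term happens to be sharp. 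You instead use the energy identity \eqref{squareroot} to change variables $t\mapsto\overline\varphi_k$ and then rescale, obtaining an \emph{exact} representation of $I(E)$ as an integral over the fixed interval $[0,1]$; this absorbs the moving endpoint $T_k(E)/2$ and the unknown profile $\overline\varphi_k$ simultaneously, so that $I'(0)$ drops out from two elementary integrals without any auxiliary lemma. Your approach is shorter and shows transparently why condition~\eqref{strange3} is precisely $F'(0)<0$; the paper's approach, on the other hand, makes the individual contributions of the period shrinkage and of the $\overline\varphi_k^2$-term more visible, at the cost of the two preparatory lemmas.
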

\begin{proof} Recall that $A_{l,k}$ is a $\frac{T_k(E)}{2}$ --periodic function. Up to a time translation, we may assume that $\overline \varphi_k$
solves \eq{psiorigin}; then we estimate
\begin{eqnarray}
\int_0^{T_k(E)/2}\sqrt{A_{l,k}(t)}\, dt &=& 2\int_0^{T_k(E)/4}\sqrt{A_{l,k}(t)}\, dt=2\int_0^{T_k(E)/4}\sqrt{\delta_l
+d_{l,k}\overline \varphi_k^2(t)}\ dt \notag \\
\mbox{as }E\to0\quad &\sim& 2\sqrt{\delta_l}\int_0^{T_k(E)/4}\left(1+\frac{d_{l,k}}{2\delta_l}\, \overline \varphi_k^2(t)\right)\, dt \notag \\
\mbox{by Lemma \ref{pointbound}}\quad &\le& \frac{\sqrt{\delta_l}}{2}\, T_k(E)
+\frac{2\, d_{l,k}\, E}{\rho_k\, \sqrt{\delta_l}}\int_0^{T_k(E)/4}\sin^2(\sqrt{\rho_k}\, t)\, dt \notag \\
\mbox{by Lemma \ref{derivataT}}\quad &\sim&
\frac{\sqrt{\delta_l}}{2}\left(T_k(0)-\frac{3\, \pi\, b_k}{2\,
\rho_k^{5/2}}\, E\right)+ \frac{2\, d_{l,k}\, E}{\rho_k\,
\sqrt{\delta_l}}\left(\frac{T_k(E)}{8}-\frac{\sin(\sqrt{\rho_k}\,
T_k(E)/2)}{4\, \sqrt{\rho_k}}\right) \label{stimona}
\end{eqnarray}

By Lemma \ref{derivataT} we also infer that
\begin{eqnarray*}
\sin\left(\frac{\sqrt{\rho_k}\, T_k(E)}{2}\right) &=&
\sin\left(\frac{\sqrt{\rho_k}\, T_k(0)}{2}-\frac{3\, \pi\, b_k}{4\, \rho_k^2}\, E+o(E)\right)\\
\mbox{by \eq{map}}\quad &=& \sin\left(\pi-\frac{3\, \pi\, b_k}{4\, \rho_k^2}\, E+o(E)\right)=o(1)\quad\mbox{as }E\to0\, .
\end{eqnarray*}
By plugging this information into \eq{stimona} we end up with
\begin{eqnarray}
\int_0^{T_k(E)/2}\sqrt{A_{l,k}(t)}\, dt &\le & \frac{\sqrt{\delta_l}}{2}\left(T_k(0)-\frac{3\, \pi\, b_k}{2\, \rho_k^{5/2}}\, E\right)+
\frac{d_{l,k}\, T_k(0)}{4\, \rho_k\, \sqrt{\delta_l}}\, E+o(E) \notag \\
\mbox{by \eq{stranger}}\quad &\sim & (m+1)\pi+\frac{\pi}{2\rho_k^2}\left(\frac{d_{l,k}}{m+1}-\frac{3(m+1)b_k}{2}\right)\, E\quad\mbox{as }E\to0\, . \label{endup}
\end{eqnarray}

On the other hand, by \eq{gksupn}, we know that the periodic coefficient $A_{l,k}$ in \eq{Hill} satisfies the following sharp bounds
$$\delta_l\le A_{l,k}(t)\le\delta_l+ d_{l,k}\Lambda_{-}^k(E)\qquad\forall t\ge0$$
where $\delta_l$ is defined in \eq{mlk}. In fact, $A_{l,k}$ has only two critical points in $[0,T_k(E)/2)$ and
$$\min_{t\in[0,T_k(E)/2)}\ A_{l,k}(t)=\delta_l\ ,\quad\max_{t\in[0,T_k(E)/2)}\ A_{l,k}(t)=\delta_l+ d_{l,k}\Lambda_{-}^k(E)\ .$$
In particular this means that
$$\log\left(\frac{\max_{t}\ A_{l,k}(t)}{\min_{t}\ A_{l,k}(t)}\right)=\log\left(1+\frac{d_{l,k}}{\delta_l}\Lambda_{-}^k(E)\right)$$
and, by \eq{stranger} and by taking advantage of the explicit expression in \eq{Lambdapm}, we obtain
\neweq{stimalog}
\log\left(\frac{\max_{t}\ A_{l,k}(t)}{\min_{t}\ A_{l,k}(t)}\right)\sim\frac{d_{l,k}}{\delta_l}\Lambda_{-}^k(E)\sim2\, \frac{d_{l,k}}{\delta_l\rho_k}\, E
=\frac{2\, d_{l,k}}{(m+1)^2\rho_k^2}\, E  \qquad\mbox{as }E\to0\, .
\endeq

By combining \eq{endup} with \eq{stimalog} we finally infer that
$$
\int_0^{T_k(E)/2}\sqrt{A_{l,k}(t)}\, dt+\frac12 \log\left(\frac{\max_{t}\ A_{l,k}(t)}{\min_{t}\ A_{l,k}(t)}\right)$$
$$\le(m+1)\pi+\left(\frac{\pi\, d_{l,k}}{2(m+1)}+\frac{d_{l,k}}{(m+1)^2}-\frac{3\pi(m+1)b_k}{4}\right)\frac{E}{\rho_k^2}+o(E)\quad\mbox{as }E\to0\, .
$$
The statement then follows from assumption \eq{strange3}.\end{proof}

Next, we remark that the function
$$E\mapsto \int_0^{T_k(E)/2}\sqrt{A_{l,k}(t)}\, dt-\frac12 \log\left(\frac{\max_{t}\ A_{l,k}(t)}{\min_{t}\ A_{l,k}(t)}\right)$$
(with a minus sign before the logarithm!) is continuous and, for $E=0$, it is equal to $\frac{T_k(0)}{2}\sqrt{\delta_l}=(m+1)\pi$ in view of
\eq{map} and \eq{stranger}. Therefore, there exists $E_1(l,k)>0$ such that
$$
\int_0^{T_k(E)/2}\sqrt{A_{l,k}(t)}\, dt-\frac12 \log\left(\frac{\max_{t}\ A_{l,k}(t)}{\min_{t}\ A_{l,k}(t)}\right)\ge m\pi\qquad\forall E\le E_1(l,k)\, .
$$
By putting $E_k^l:=\min\{E_1(l,k),E_2(l,k)\}$ and by combining this result with Lemma \ref{suffcond2} we infer that
$$
m\pi\le\int_0^{T_k(E)/2}\sqrt{A_{l,k}(t)}\, dt-\frac12 \log\left(\frac{\max_{t}\ A_{l,k}(t)}{\min_{t}\ A_{l,k}(t)}\right)
$$
$$
\le\int_0^{T_k(E)/2}\sqrt{A_{l,k}(t)}\, dt+\frac12 \log\left(\frac{\max_{t}\ A_{l,k}(t)}{\min_{t}\ A_{l,k}(t)}\right)\le (m+1)\pi\qquad\forall E\le E_k^l\, .
$$
Then a stability result by Burdina \cite{burdina} (see also \cite[Test 3, p.703]{yakubovich}) allows us to conclude that the trivial solution of
\eq{Hill} is stable. By Definition \ref{newdeff} this means that the $k$-th longitudinal mode $\overline \varphi_k$ at energy $E\le E_k^l$
is stable with respect to the $l$-th torsional mode. This completes the proof of Theorem \ref{stable2}.

\begin{remark}\label{several} {\em Instead of the Burdina stability criterion, one may try to use different criteria which may need assumptions other
than \eq{strange3}. For instance, the Zhukovskii criterion (already used in Lemma \ref{suffcond}) needs the assumption that $2d_{l,k}\le(m+1)^2b_k$; since
it is more restrictive than \eq{strange3}, it seems that the Burdina criterion performs better in this situation. But there exist many other criteria, see
\cite{yakubovich}, and some of them could allow to relax further \eq{strange3}. And, perhaps, some criterion would allow to drop any assumption of this kind
after estimating directly $d_{l,k}$ and $b_k$.}\end{remark}

\section{Conclusions: our answers to questions (Q1)-(Q2)-(Q3)}\label{conclusions}

In Section \ref{secsuffcond} we have obtained stability results
for a finite dimensional approximation of \eq{wave-3}. We
analytically proved that if a longitudinal mode is oscillating
with sufficiently small amplitude then it is stable, that is, it
does not transfer energy to torsional modes. The numerical results
in Section \ref{numres} show that if a longitudinal mode is
oscillating with sufficiently large amplitude then it is unstable,
that is, it transfers energy to a torsional mode. These results
are numerically validated in Section \ref{truncsys} thanks to a
more precise approximation of \eq{wave-3}, see
\eq{eq:sistema-completo}; the small discrepancies between the two
approaches are justified in detail. For more numerical simulations
corresponding to other values of $k$ and $l$ we refer to
\cite{bfg}. Overall, recalling Definition \ref{newdeff}, these
results enable us to give the following answer to question {\bf
(Q1)}.\par\medskip
\begin{minipage}{168mm}
{\em Longitudinal oscillations suddenly transform into torsional oscillations because when the flutter energy is reached the longitudinal mode
becomes unstable with respect to a torsional mode.}
\end{minipage}
\par\bigskip

A few days prior to the TNB collapse, the project engineer L.R.\ Durkee wrote a letter (see \cite[p.28]{ammann}) describing the
oscillations which were so far observed at the TNB. He wrote: {\em Altogether, seven different motions have been definitely identified on the main
span of the bridge, and likewise duplicated on the model. These different wave actions consist of motions from the simplest, that of no nodes, to the
most complex, that of seven modes.} On the other hand, we have repeatedly recalled that the day of the collapse {\em the motions, which a moment before
had involved a number of waves (nine or ten) had shifted almost instantly to two}.

In Section \ref{modes} we found the explicit form of both the
longitudinal and torsional modes. We also obtained accurate
approximations of the corresponding eigenvalues when the TNB
parameters are considered. We have analyzed the first and second
torsional modes although we have explained in Figure
\ref{duepponti} why the cables inhibit the appearance of the first
mode. This is confirmed by recent studies in \cite{bergotcivati}.
In Remarks \ref{primarem} and \ref{secondarem} we emphasized that
$A_1(k)>A_2(k)$ provided that $k=8,9,10$ while for lower $k$ we
have that $A_1(k)<A_2(k)$. According to the above reported letter
of Durkee, the TNB never oscillated with $k=8,9,10$ before the day
of the collapse.

The results in \cite{bgz} tell us that the energy transfer occurs when the ratio between the torsional and longitudinal frequencies is small
(close to $1$). And in Remarks \ref{primarem} and \ref{secondarem} we saw that the energy of the $k$-th longitudinal mode for $k=8,9,10$ transfers
earlier to the second torsional mode rather than to the first.

Overall, these results allow us to give the following answer to question {\bf (Q2)}:
\par\medskip
\begin{minipage}{168mm}
{\em Torsional oscillations appear with a node at midspan because the cables inhibit the appearance of the first mode and the longitudinal modes
prior to the switch to torsional modes have an instability threshold with respect to the second torsional mode smaller than with respect to the first
torsional mode.}
\end{minipage}
\par\bigskip

Invoking again the results in \cite{bgz} one reaches the
conclusion that the critical amplitude of oscillation of a
longitudinal mode depends on the ratio between the torsional and
longitudinal frequencies. Table \ref{tableigen} shows that this
ratio reaches its minimum for the 10th longitudinal mode. And
precisely the 10th mode was observed the day of the collapse prior
to the appearance of torsional oscillations. Therefore our answer
to question {\bf (Q3)} is as follows.\par\medskip
\begin{minipage}{168mm}
{\em There are longitudinal oscillations which are more prone to
generate torsional oscillations; in the case of the TNB the most
prone was the 10th longitudinal mode.}
\end{minipage}
\par\bigskip\noindent
\textbf{Acknowledgments.} The first author is partially supported
by the Research Project FIR (Futuro in Ricerca) 2013
\emph{Geometrical and qualitative aspects of PDE's}. The second
and third authors are partially supported by the PRIN project {\em
Equazioni alle derivate parziali di tipo ellittico e parabolico:
aspetti geometrici, disuguaglianze collegate, e applicazioni}. The
three authors are members of the Gruppo Nazionale per l'Analisi
Matematica, la Probabilit\`a e le loro Applicazioni (GNAMPA) of
the Istituto Nazionale di Alta Matematica (INdAM). The second
author was partially supported by the GNAMPA project 2014 {\em
Stabilit\`a spettrale e analisi asintotica per problemi
singolarmente perturbati}. The second author is grateful to G.
Arioli and C. Chinosi for useful discussions during the
preparation of the numerical experiments.

\end{document}